\theoremstyle{plain}
\numberwithin{figure}{section}
\theoremstyle{theorem}
\newtheorem*{thm}{Theorem}
\newtheorem{theorem}{Theorem}[section]
\newtheorem{cor}[theorem]{Corollary}
\newtheorem{construction}[theorem]{Construction}
\newtheorem{lemma}[theorem]{Lemma}
\newtheorem{step}{Step}
\newtheorem{prop}[theorem]{Proposition}
\newcommand{\G}{\PSL_2 \mathbb{C}}
\newcommand{\g}{\Gamma}
\newcommand{\ie}{{\itshape i.e.} }
\renewcommand{\to}[1][]{\xrightarrow{\ #1\ }}
\newcommand{\bR}{\mathbb{R}}
\newcommand{\bC}{\mathbb{C}}
\newcommand{\bT}{\mathbb{T}}
\newcommand{\bZ}{\mathbb{Z}}
\newcommand{\bH}{\mathbb{H}}
\newcommand{\cC}{\mathcal{C}}
\newcommand{\cE}{\mathcal{E}}
\newcommand{\cM}{\mathcal{M}}
\newcommand{\cP}{\mathcal{P}}
\newcommand{\cU}{\mathcal{U}}
\newcommand{\cK}{\mathcal{K}}
\newcommand{\cS}{\mathcal{S}}
\newcommand{\cT}{\mathcal{T}}
\newcommand{\cH}{\mathcal{H}}
\newcommand{\cEL}{\mathcal{EL}}
\newcommand{\teich}{\mathscr{T}}
\newcommand{\sS}{\mathscr{S}}
\newcommand{\inverse}{^{-1}}
\DeclareMathOperator{\cl}{cl }
\DeclareMathOperator{\hyp}{hyp}
\DeclareMathOperator{\collar}{collar}
\DeclareMathOperator{\Mod}{Mod}
\DeclareMathOperator{\Hyp}{Hyp}
\DeclareMathOperator{\Hb}{H}
\DeclareMathOperator{\Cb}{C}
\DeclareMathOperator{\PSL}{PSL}
\DeclareMathOperator{\mass}{mass}
\DeclareMathOperator{\vol}{Vol}
\DeclareMathOperator{\area}{Area}
\DeclareMathOperator{\im}{im}
\DeclareMathOperator{\str}{str}
\DeclareMathOperator{\dR}{dR}
\DeclareMathOperator{\interior}{int}
\DeclareMathOperator{\base}{base}
\begin{document}
\title{Bounded cohomology of finitely generated Kleinian groups}
\author{James Farre}
\maketitle

\begin{abstract} Any action of a group $\Gamma$ on $\bH^3$ by isometries yields a class in degree three bounded cohomology by pulling back the volume cocycle to $\Gamma$.  We prove that the bounded cohomology of finitely generated Kleinian groups without parabolic elements distinguishes the asymptotic geometry of geometrically infinite ends of hyperbolic $3$-manifolds.  That is, if two homotopy equivalent hyperbolic manifolds with infinite volume and without parabolic cusps have different geometrically infinite end invariants, then they define a $2$ dimensional subspace of bounded cohomology.   Our techniques apply to classes of hyperbolic $3$-manifolds that have sufficiently different end invariants, and we give explicit bases for vector subspaces whose dimension is uncountable.  We also show that these bases are uniformly separated in pseudo-norm, extending results of Soma.  The technical machinery of the Ending Lamination Theorem allows us to analyze the geometrically infinite ends of hyperbolic $3$-manifolds with unbounded geometry. \end{abstract}

\section{Introduction}

The bounded cohomology of groups and spaces behaves very differently from the ordinary cohomology.   In degree one with trivial coefficients, the bounded cohomology of any discrete group vanishes, while the bounded cohomology  of a non-abelian free group $F_2$ in degree two is a Banach space with dimension the cardinality of the continuum (see for example \cite{brooks:remarks} and \cite{mm:banach}).  In this paper, we study the degree three bounded cohomology of finitely generated Kleinian groups with infinite co-volume.  Computing bounded cohomology has remained elusive more than 35 years after Gromov's seminal paper \cite{gromov:bdd}.  For example, it is not known if the bounded cohomology of non-abelian free groups vanishes in degree $4$ and higher.  

To find non-trivial classes in the degree three bounded cohomology, we study the asymptotic geometry of geometrically infinite ends of hyperbolic $3$-manifolds.  
In manifolds without parabolic cusps, these ends are parameterized by the space of \emph{ending laminations}.  Let $M$ be a compact $3$-manifold without torus boundary components whose interior admits a complete hyperbolic metric of infinite volume, and let $\rho: \pi_1(M)\to \G$ be a discrete and faithful representation.  Then $M_\rho = \bH^3/\im \rho$ is a hyperbolic $3$-manifold equipped with a  homotopy equivalence $f: M\to M_\rho$ inducing $\rho$ at the level of fundamental groups.  To the conjugacy class of $\rho$ we associate a bounded $3$-cocycle $\hat\omega_\rho$ that defines a class in bounded cohomology called the \emph{bounded fundamental class} of the representation $\rho$.  If  $\sigma: \Delta_3\to M_\rho$ is a singular $3$-simplex, then $\hat\omega_\rho(\sigma)$ is the algebraic volume of the unique totally geodesic hyperbolic tetrahedron homotopic, rel vertices, to $\sigma$ (see \S\ref{boundedfundamtentalclass}).  
In this paper, we attempt to describe the behavior of $[\hat\omega_\rho] \in \Hb_b^3(\pi_1(M);\bR)$ as we let $\rho : \pi_1(M) \to \G$ vary over discrete and faithful representations.  

A result of Yoshida \cite{yoshida:bdd}  shows that if $\phi_1$ and $\phi_2$ are independent pseudo-Anosov homeomorphisms of a closed surface $S$, then the bounded fundamental classes of the infinite cyclic covers of their mapping tori are non-zero and linearly independent.  We prove that if two homotopy equivalent hyperbolic $3$-manifolds have different ending laminations from each other, then their bounded fundamental classes are linearly independent in bounded cohomology. Our analysis only depends on the structure of the ends of hyperbolic $3$-manifolds, and we will always work in the covers corresponding to such ends.  When our manifolds have incompressible ends, we reduce to the case of studying marked Kleinian surface groups, though our techniques apply in the setting of compressible ends, as well. In section \S\ref{main1}, we show
\begin{theorem}\label{main:1} Suppose $\g$ is a finitely generated group that is isomorphic to a Kleinian group without parabolic or elliptic elements, and let $\{\rho_\alpha: \g \to \G: \alpha\in\Lambda\}$ be a collection of discrete and faithful representations without parabolic or elliptic elements such that at least one of the geometrically infinite end invariants of $M_{\rho_\alpha}$ is different from the geometrically infinite end invariants of $M_{\rho_\beta}$ for all $\alpha\not=\beta\in \Lambda$. Then $\{[\hat\omega_{\rho_\alpha}]: \alpha\in\Lambda\}$ is a linearly independent set in $ \Hb_b^3(\g;\bR)$.
\end{theorem}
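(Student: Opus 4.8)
The plan is to prove linear independence by a growth-rate argument in the spirit of Soma: for each index I will produce a sequence of relative $3$-cycles on which exactly one of the bounded fundamental classes is unbounded. Since linear independence of a family is equivalent to that of each finite subfamily, fix $\alpha_1,\dots,\alpha_n\in\Lambda$, suppose $\sum_{i=1}^n c_i[\hat\omega_{\rho_{\alpha_i}}]=0$ in $\Hb_b^3(\g;\bR)$, and write $\sum_i c_i\hat\omega_{\rho_{\alpha_i}}=\delta b$ for a bounded $2$-cochain $b$. For each $j$ the hypothesis provides an ending lamination $\nu_j$ of $M_{\rho_{\alpha_j}}$ that is not an end invariant of any $M_{\rho_{\alpha_i}}$ with $i\neq j$; say $\nu_j$ lives on the end corresponding to a subgroup $\g_j\le\g$. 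Passing to the covers $N_i\to M_{\rho_{\alpha_i}}$ determined by $\g_j$ and using that restriction $\Hb_b^3(\g;\bR)\to\Hb_b^3(\g_j;\bR)$ sends $\hat\omega_{\rho_{\alpha_i}}$ to $\hat\omega_{\rho_{\alpha_i}|_{\g_j}}$ by naturality of the volume cocycle, it suffices to work inside $N_j$: here $\nu_j$ is an ending lamination of $N_j$, while for $i\neq j$ Canary's covering theorem ensures $\nu_j$ is not an ending lamination of $N_i$, hence is realized there by pleated surfaces. In the incompressible case $\g_j$ is a Kleinian surface group; the compressible case is handled analogously.

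Next I construct the test cycles. Using Minsky's model manifold for the geometrically infinite $\nu_j$-end of $N_j$, choose an exhaustion of that end by compact product regions $R_k\cong S\times[0,1]$ whose frontier consists of two bounded-area (pleated or simplicial-hyperbolic) surfaces, one fixed and one realizing simple closed curves $\gamma_k\to\nu_j$ that exit the end, with $\vol(R_k)\to\infty$. Let $z^{(j)}_k$ be the relative fundamental cycle of $R_k$, built from the model's block decomposition so that its interior uses about $\vol(R_k)$ geodesic simplices of bounded size while its boundary, represented by the ideal-triangle decompositions of the two bounded-area surfaces, uses $O(|\chi(S)|)$ simplices; thus $\|\partial z^{(j)}_k\|_1$ is bounded independently of $k$ even though $\|z^{(j)}_k\|_1\to\infty$. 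Straightening is $C^1$-close to the identity on the small interior simplices and leaks only a bounded amount of volume near the bounded-area frontier, so $\langle\hat\omega_{\rho_{\alpha_j}},z^{(j)}_k\rangle\ge\vol(R_k)-C\to\infty$. For $i\neq j$, push $z^{(j)}_k$ forward via the marking into $N_i$: since $\nu_j$ is realized there, the bounded-diameter property of pleated surfaces realizing curves converging to a realizable lamination forces the two frontier surfaces into a region $B_i\subset N_i$ of diameter independent of $k$; coning $\partial z^{(j)}_k$ off inside $B_i$ gives a $3$-chain $w_k$ with $\|w_k\|_1=O(|\chi(S)|)$ and the same boundary, and since $\Hb_3(N_i;\bR)=0$ and $\hat\omega_{\rho_{\alpha_i}}$ is a cocycle, $\langle\hat\omega_{\rho_{\alpha_i}},z^{(j)}_k\rangle=\langle\hat\omega_{\rho_{\alpha_i}},w_k\rangle=O(1)$. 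Evaluating the relation on $z^{(j)}_k$ yields $c_j\langle\hat\omega_{\rho_{\alpha_j}},z^{(j)}_k\rangle=\langle b,\partial z^{(j)}_k\rangle-\sum_{i\neq j}c_i\langle\hat\omega_{\rho_{\alpha_i}},z^{(j)}_k\rangle=O(1)$, so $c_j=0$; letting $j$ range over $1,\dots,n$ completes the proof.

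The main obstacle is the construction and lower bound in the second paragraph in the unbounded-geometry setting: one must exhaust the geometrically infinite end by product regions of volume tending to infinity that are cobounded by uniformly bounded-area surfaces, equip these regions with decompositions into controlled geodesic simplices of uniformly bounded boundary complexity, and show that straightening the resulting relative cycles detects essentially the full volume rather than collapsing it. In bounded geometry this can be read off from Thurston's and Bonahon's description of ends, but in general it is precisely here that the model-manifold technology of the Ending Lamination Theorem is needed, both to locate bounded-area sweepout surfaces deep in the end and to organize the interior into simplices whose straightened volumes sum correctly. A secondary technical point, used in the first paragraph, is the application of Canary's covering theorem to guarantee that $\nu_j$ fails to be an ending lamination of the relevant covers exactly when it is not an end invariant of the base, so that it remains realizable with bounded image there.
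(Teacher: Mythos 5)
Your high-level strategy is essentially the paper's: construct a family of $3$-chains $z^{(j)}_k$ with uniformly bounded boundary complexity, such that evaluating $\hat\omega_{\rho_{\alpha_j}}$ blows up while evaluating $\hat\omega_{\rho_{\alpha_i}}$ ($i\neq j$) stays bounded; then a bounded primitive forces $c_j=0$. The passage to the covers corresponding to ends, the invocation of the Covering Theorem and Otal/Namazi--Souto realizability, and the role of the bi-Lipschitz model are all in line with Theorem~\ref{mainthm1} and Corollary~\ref{general}. However, there are two genuine gaps at precisely the places you flag as ``the main obstacle,'' and they are the technical heart of the paper.

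First, your lower bound argument is not correct as stated, and the paper uses a different mechanism. You propose to triangulate $R_k$ by simplices of bounded size coming from the model blocks and to argue that straightening is $C^1$-close to the identity on interior simplices, so that the straightened chain carries volume $\approx\vol(R_k)$. This does not obviously survive unbounded geometry (deep in Margulis tubes the straightening of even combinatorially small simplices is not controlled in a way that sums nicely), and even granting it, one must show the signed volumes do not cancel. The paper sidesteps the interior entirely. Since $M_\rho$ is open, $\omega=d\zeta$ for a smooth $2$-form $\zeta$, so $\hat\omega=\delta\hat\zeta$ in the full (unbounded) cochain complex; hence $\hat\omega(V)$ depends only on $\partial V$. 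One then replaces $V$ by any convenient chain with the same boundary --- namely $C_0+W_k+C_k$, where $W_k$ is the embedded product region between two extended split level surfaces and $C_0,C_k$ triangulate the homotopies from those embedded surfaces to the simplicial hyperbolic surfaces $F_0,F_k$ --- and computes $\hat\omega(V_k)\ge\vol(W_k)-2M$. The uniform bound $M$ on $\mass_H(C_j)$ is the real technical content (Lemma~\ref{hmtpy}): $S\times I$ is cut into solid tori with abelian $\pi_1$ and an isoperimetric inequality for abelian covers (Thurston) bounds the mass of the filling by the mass of the boundary torus, which in turn is bounded because the split level and simplicial hyperbolic metrics have uniformly bounded total area. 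Nothing like ``straightening is close to the identity'' is used, nor does the $1$-norm of $V_k$ play a role in the independence argument (it only matters for the separation result in \S\ref{main2}).

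Second, your upper bound argument has a gap. You assert that pushing $\partial z^{(j)}_k$ forward into $N_i$ lands in a region of bounded diameter, citing the bounded-diameter property of pleated surfaces. But the naive pushforward of $\partial R_k$ via the homotopy equivalence $N_j\to N_i$ is not a pleated or simplicial hyperbolic surface in $N_i$, and there is no a priori control on its diameter. What the paper does is first replace this map by a simplicial hyperbolic surface $B^j_k$ in $N_i$ adapted to the same triangulation $\cT^i_k$; these have images in a fixed compact set by the Bounded Diameter Lemma together with the fact that the geodesic representatives of the curves $\alpha^i_k\to\nu_j$ stay in a compact set in $N_i$ (here the realizability is used). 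The cost of this replacement is bounded by Lemma~\ref{straighten}, which gives $|\hat\omega_j(V^i_k-C^j_k)|\le 3|\cT^i_k|v_3$, a quantity depending only on the genus. Only after this replacement can one cone off inside a compact set and invoke Lemma~\ref{upper}/\ref{uniformupperbound}. Without this intermediate step, your estimate $\langle\hat\omega_{\rho_{\alpha_i}},z^{(j)}_k\rangle=O(1)$ is unjustified.

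In short: the overall scheme is right, and you correctly identify where the difficulties lie, but you have not supplied the two lemmas that resolve them --- the isoperimetric control of the homotopy chains near the frontier (Lemma~\ref{hmtpy}) and the bounded-cost replacement of pushforward boundaries by simplicial hyperbolic surfaces (Lemma~\ref{straighten}). The paper's use of $\omega=d\zeta$ to reduce evaluation to a boundary integral also renders moot your concerns about straightening the interior simplices.
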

In fact, our proof shows something slightly stronger.  See Corollary \ref{general} for the more general statement.  In \cite{soma:kleinian}, Soma proves that if $\g$ is finitely generated and $\rho: \g\to \G$ is discrete, does not contain elliptics, and $M_\rho$ has a geometrically infinite end, then $\|[\omega_\rho]\|_\infty = v_3$, where $v_3$ is the volume of the regular ideal tetrahedron.    In \S\ref{main2}, we prove that bounded classes defined by manifolds with geometrically infinite ends are \emph{uniformly separated} in pseudo-norm.
\begin{theorem}\label{separation:1}
Suppose $\g$ is a finitely generated group that is isomorphic to a Kleinian group without parabolic or elliptic elements.   There is an $\epsilon=\epsilon(\g)>0$ such that if $\{\rho_i: \g \to \G: i = 1, ... ,n\}$ is a collection of discrete and faithful representations without parabolic or elliptic elements such that at least one of the geometrically infinite end invariants of $M_{\rho_i}$ is different from the geometrically infinite end invariants of $M_{\rho_j}$ for all $i\not= j$ then 
\[\|\sum_{i = 1}^n a_i[\hat\omega_{\rho_i}]\|_\infty > \epsilon\max{|a_i|}.\]
\end{theorem}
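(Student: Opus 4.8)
The plan is to make the proof of Theorem~\ref{main:1} effective, tracking the constant it produces, and to repackage it as a direct lower bound on $\|\cdot\|_\infty$ in the style of Soma's computation of $\|[\hat\omega_\rho]\|_\infty$ in \cite{soma:kleinian}. First I would carry out the reductions used there: pass to the cover of $M_{\rho_i}$ associated with the end whose invariant distinguishes $\rho_i$, and replace $\g$ by the surface group $\pi_1 S$ when that end is incompressible (and the analogous reduction for a compressible end). Since restriction of bounded classes to a subgroup is norm non-increasing, it suffices to prove the estimate after these reductions, so I may assume the $\rho_i$ are marked Kleinian surface groups, that $\rho_1$ has a geometrically infinite end with ending lamination $\nu$, and that $\nu$ is not an end invariant of any $M_{\rho_i}$ with $i\geq 2$; in particular $H_3(\g;\bR)=0$. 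Relabelling so that $|a_1| = \max_i|a_i| =: m > 0$, replacing $\sum_i a_i[\hat\omega_{\rho_i}]$ by its negative if needed, and rescaling, it then suffices to show $\|[\Omega]\|_\infty \geq v_0$ for $\Omega := \hat\omega_{\rho_1} + \sum_{i\geq 2} b_i\hat\omega_{\rho_i}$ with $|b_i|\leq 1$ and a constant $v_0 = v_0(S) > 0$ depending only on the topological type: the theorem then follows with $\epsilon(\g) := v_0/2$, since $\|\sum_i a_i[\hat\omega_{\rho_i}]\|_\infty = m\|[\Omega]\|_\infty \geq mv_0 > \epsilon(\g)\max_i|a_i|$.

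For the probes I would work deep in the $\nu$-end of $M_{\rho_1}$. Using the Ending Lamination Theorem and the combinatorics of the model manifold, choose simple closed curves $\gamma_k \to \nu$ realized by pleated surfaces $g_0, g_k\colon S\to M_{\rho_1}$ exiting the $\nu$-end, and let $z_k$ be an \emph{efficient} fundamental cycle of the region $R_k \cong S\times[0,1]$ between $g_0(S)$ and $g_k(S)$ by geodesic simplices, meaning $\|z_k\|_1 \leq \vol(R_k)/v_0$. Then $\hat\omega_{\rho_1}(z_k) = \vol(R_k)$ exactly, so (i) $\hat\omega_{\rho_1}(z_k) \geq v_0\|z_k\|_1$. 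As $\vol(R_k)\to\infty$ while $\|\partial z_k\|_1 = O(|\chi(S)|)$ by Gauss--Bonnet, we also get (ii) $\|\partial z_k\|_1 = o(\|z_k\|_1)$. Producing efficient geodesic triangulations uniformly across a geometrically infinite end---in particular through Margulis tubes of arbitrarily short core length, the unbounded-geometry phenomenon the model is built to handle---is the first place the Ending Lamination Theorem enters.

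The main obstacle is the third estimate: (iii) $\hat\omega_{\rho_i}(z_k) = o(\|z_k\|_1)$ for each $i\geq 2$, where $z_k$ is regarded as a chain for $\g$ via the homotopy equivalences. Since $H_3(\g;\bR)=0$, the value $\hat\omega_{\rho_i}(z_k)$ depends only on the homology class of $\partial z_k$, hence can be read off from any chain in $M_{\rho_i}$ with that boundary; the content of (iii) is that, because $\nu$ is not an end invariant of $M_{\rho_i}$, the curves $\gamma_k\to\nu$ are realized in a part of $M_{\rho_i}$ of volume bounded independently of $k$ (a fixed compact set in the bounded-geometry case, and in general the bounded-complexity portion of $M_{\rho_i}$ through which a homotopy from $g_0$ to $g_k$ must sweep, together with finitely many Margulis tubes, each of bounded volume). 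This is exactly the kind of statement extracted from the model manifold for $M_{\rho_i}$ via the subsurface-projection description of which curves are carried deep into a given end, and it is essentially what already underlies the proof of Theorem~\ref{main:1} and Corollary~\ref{general}; I expect it to be the technically heaviest point.

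Granting (i)--(iii), the conclusion follows by dualization. If $\|[\Omega]\|_\infty < v_0$, set $\eta := v_0 - \|[\Omega]\|_\infty > 0$ and choose a bounded cochain $u$ with $\|\Omega + \delta u\|_\infty < v_0 - \eta/2$. Evaluating on $z_k$,
\begin{align*}
\left(v_0 - \tfrac{\eta}{2}\right)\|z_k\|_1 \;&>\; (\Omega + \delta u)(z_k) \;=\; \hat\omega_{\rho_1}(z_k) + \sum_{i=2}^{n} b_i\,\hat\omega_{\rho_i}(z_k) + u(\partial z_k) \\
&\geq\; v_0\,\|z_k\|_1 - o(\|z_k\|_1),
\end{align*}
where the last inequality uses (i)--(iii), $|b_i|\leq 1$, and that $\|u\|_\infty$ is fixed. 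Dividing by $\|z_k\|_1$ and letting $k\to\infty$ yields $v_0 - \eta/2 \geq v_0$, a contradiction; hence $\|[\Omega]\|_\infty \geq v_0$, and the theorem follows with $\epsilon(\g) = v_0(S)/2$.
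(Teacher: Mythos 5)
Your overall strategy---build a sequence of $3$-chains $z_k$ exiting the geometrically infinite end with (i) volume comparable to $1$-norm under $\hat\omega_{\rho_1}$, (ii) negligible boundary, and (iii) negligible pairing with the $\hat\omega_{\rho_i}$ for $i\geq 2$, and then dualize---is the same strategy the paper uses in Theorem~\ref{separation}, and the reduction to marked surface groups and the final telescoping inequality are both correct. Your estimate (iii) is also essentially what the paper established in \S\ref{main1} via Lemma~\ref{uniformupperbound} and Theorem~\ref{NS}; contrary to your prediction, that is \emph{not} the heavy point of the proof.

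The genuine gap is in (i): you posit an ``efficient fundamental cycle of the region $R_k\cong S\times[0,1]$ by geodesic simplices'' with $\|z_k\|_1\leq \vol(R_k)/v_0$ uniformly in $k$. This is precisely the technical crux, and as stated with \emph{integral geodesic} simplices it is not available. The obstruction lives in the annular part of the hierarchy: Brock's Dehn twist blocks (\cite{brock:wp}, \textbf{BLI}) require $O(n)$ tetrahedra for a block realizing $n$ Dehn twists, and $n$ is the annular subsurface projection distance $d_{Y_\alpha}(\nu^-,\nu^+)$, which is unbounded across a geometrically infinite end. Meanwhile the volume of the corresponding Margulis tube is uniformly bounded. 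So the ratio $\|z_k\|_1/\vol(R_k)$ can blow up if you build $z_k$ by naively triangulating block-by-block, and no bound in terms of $\|H\|$ alone would hold for integral chains. The paper's fix (\S\ref{telescopesoftori}) is to replace the integral Dehn twist block $B_\alpha$ by a \emph{real} $3$-chain $\mathscr{B}_\alpha$ obtained by a telescoping sum over iterated double covers of the solid torus, achieving $\|\mathscr{B}_\alpha\|_1\leq 37+n_0$ regardless of $n$ (Lemma~\ref{telescope}, Proposition~\ref{reallygoodtri}). Only after this replacement can one pair $\|\mathscr{U}\|_1\lesssim \|H\|$ (Proposition~\ref{reallygoodtri}) against the lower volume bound $\vol(W)\gtrsim \|H\|_4\gtrsim \|H\|$ coming from Theorems~\ref{counting} and \ref{four}. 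You identify the correct difficulty (``Margulis tubes of arbitrarily short core length''), but you do not realize that it forces you out of the class of integral geodesic triangulations and into real chains---that is the new idea you are missing.

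Two smaller imprecisions. First, your claim that $\hat\omega_{\rho_1}(z_k)=\vol(R_k)$ \emph{exactly} does not hold for the paper's chains; one only gets $|\hat\omega(V)|\geq \vol(W)-2M_0$ (Lemma~\ref{hmtpy}, Proposition~\ref{finelower}), with a uniformly bounded error that disappears in the limit. This does not affect your dualization argument, but the equality should not be asserted. Second, Gauss--Bonnet controls the \emph{area} of $\partial z_k$, not its number of simplices; the paper instead uses the combinatorial fact that standard triangulations adapted to a pants decomposition have exactly $16(g-1)$ triangles to get $\|\partial V_i\|_1 = 32(g-1)$. These are the kinds of normalizations your proposal would need to make precise, but they are not the source of the main gap; the telescoping trick in \S\ref{telescopesoftori} is.
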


Our proof also produces a criterion to detect faithfulness of representations (see Theorem \ref{injective theorem}).  Note that in the following, we insist neither that $S$ is a closed surface nor that $\rho'$ is discrete. 

\begin{theorem}\label{faithful:1}
Let $S$ be an orientable surface with negative Euler characteristic.  There is a constant $\epsilon = \epsilon(S)$ such that the following holds.  Let $\rho: \pi_1(S)\to \G$ be discrete and faithful, without parabolic elements, and such that $M_\rho$ has at least one geometrically infinite end invariant.  If $\rho': \pi_1(S)\to \G$ is any other representation satisfying \[\|[\hat\omega_\rho] - [\hat\omega_{\rho'}]\|_\infty <\epsilon,\]
then $\rho'$ is faithful.  
\end{theorem}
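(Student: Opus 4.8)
The strategy is to argue by contrapositive: if $\rho'$ is not faithful, then $[\hat\omega_\rho]$ and $[\hat\omega_{\rho'}]$ are forced to be far apart in pseudo-norm. The first observation is that if $\rho'$ has non-trivial kernel $N \trianglelefteq \pi_1(S)$, then $\hat\omega_{\rho'}$ factors through the quotient $\pi_1(S)/N$; concretely, the bounded class $[\hat\omega_{\rho'}]$ is pulled back from $\mathrm{H}^3_b(\pi_1(S)/N;\bR)$ along the quotient map. When $S$ has negative Euler characteristic and $N$ is non-trivial, the quotient $\pi_1(S)/N$ has cohomological (indeed bounded-cohomological) behavior that is constrained: either $\pi_1(S)/N$ is virtually free (if $N$ corresponds to an infinite-index, i.e. non-peripheral, subgroup giving a lower-complexity quotient) or more degenerate. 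The key point I want is that $[\hat\omega_{\rho'}]$ lies in a subspace of $\mathrm{H}^3_b(\pi_1(S);\bR)$ that is "small" — for instance, if the image of $\rho'$ is elementary or the quotient is virtually free, then $[\hat\omega_{\rho'}] = 0$, and more generally one reduces to the case where $\rho'$ factors through a surface group of strictly smaller complexity, which has no geometrically infinite ends realizing the lamination of $\rho$.

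The second, main ingredient is the separation estimate underlying Theorem \ref{separation:1}, which is proved via a local-to-global argument: one finds, inside the geometrically infinite end of $M_\rho$ carrying the ending lamination, a large family of disjointly supported configurations (straightened simplices, or rather chains dual to the lamination) on which $\hat\omega_\rho$ evaluates near $v_3$ with a definite sign, and where the corresponding configurations for any \emph{different} end invariant — in particular for a degenerate $\rho'$ — must have controlled, strictly smaller total contribution. The plan is to run exactly that machinery with $\rho'$ playing the role of the "wrong" end invariant: since a non-faithful $\rho'$ cannot have the ending lamination of $\rho$ (its image is too small to carry it), the same geometric witnesses that separate two genuine ending laminations will separate $[\hat\omega_\rho]$ from $[\hat\omega_{\rho'}]$, yielding $\|[\hat\omega_\rho] - [\hat\omega_{\rho'}]\|_\infty \geq \epsilon(S)$ for the same $\epsilon$ (up to a harmless constant) produced in \S\ref{main2}. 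Contrapositively, $\|[\hat\omega_\rho] - [\hat\omega_{\rho'}]\|_\infty < \epsilon$ forces $\rho'$ faithful.

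To make the middle step rigorous one must handle the case where $\rho'$ \emph{is} injective on the subsurface group carrying the relevant lamination but fails injectivity elsewhere; here I would localize to the cover of $M_\rho$ corresponding to that subsurface (as the paper does throughout), observe that the bounded fundamental class and the separation estimate are computed there, and note that $\rho'$ restricted to that subsurface group is then either faithful — contradicting non-faithfulness after exhausting subsurfaces — or kills an essential curve, which collapses the geometrically infinite end and again kills the corresponding contribution to $\hat\omega_{\rho'}$. So the reduction is: non-faithful $\Rightarrow$ some essential simple closed curve (or subsurface) is killed $\Rightarrow$ the associated block of the ending-lamination-dual chain contributes $\approx 0$ to $\hat\omega_{\rho'}$ while contributing $\approx v_3$ to $\hat\omega_\rho$ $\Rightarrow$ pseudo-norm of the difference is $\geq \epsilon$.

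The main obstacle I anticipate is the second step's \textbf{uniformity}: one needs a single $\epsilon$ depending only on $S$ (not on $\rho$ or the ending lamination), so the geometric witnesses must be produced by a mechanism that does not degenerate as the ending lamination varies or as the geometry of the end becomes unbounded. This is precisely what the Ending Lamination Theorem and the model-manifold technology are invoked for earlier in the paper, and the crux will be to check that the chains witnessing separation can be chosen with support and mass bounds independent of $\rho$ — i.e., that the "definite amount of volume with a definite sign" harvested from a geometrically infinite end is quantitatively uniform across all such ends of surfaces homotopy equivalent to $S$. Granting the uniform version of Theorem \ref{separation:1} (with $\rho'$ allowed to be an arbitrary, possibly indiscrete representation, which is the content flagged in the remark preceding Theorem \ref{faithful:1}), the faithfulness criterion follows formally by the contrapositive argument sketched above.
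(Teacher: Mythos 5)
Your proposal misses the key structural idea of the paper's proof and, as written, contains a gap that cannot be closed by the machinery of \S\ref{main2} alone.

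Your plan is to treat $\rho'$ as a representation with the ``wrong'' end invariant and apply the separation machinery of Theorem \ref{separation:1} directly to $[\hat\omega_\rho]$ and $[\hat\omega_{\rho'}]$, concluding by contrapositive. But everything in \S\ref{main2}---extended split level surfaces, pleated surface arguments, hierarchies, the upper bound of Lemma \ref{uniformupperbound}---presupposes that the representations being compared are \emph{discrete and faithful}; there is no ending lamination, model manifold, or pleated surface attached to an arbitrary (possibly indiscrete, possibly non-faithful) $\rho'$. You acknowledge this by "granting the uniform version of Theorem \ref{separation:1} with $\rho'$ allowed indiscrete," but the paper never proves such an extension, and proving it would require a different argument for the upper bound in Lemma \ref{uniformupperbound}, which uses Theorem \ref{NS} and realizability by pleated surfaces---neither makes sense for indiscrete $\rho'$. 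Separately, the claim that the quotient $\pi_1(S)/N$ is virtually free or forces $[\hat\omega_{\rho'}]=0$ is not correct in general, and is not used.

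What the paper actually does is quite different and avoids ever applying the separation estimate to $\rho'$. Given $1\neq\gamma\in\ker\rho'$, pass to a finite-index cover $i:S'\to S$ in which $\gamma$ lifts to a \emph{simple} curve, and consider the Dehn twist $D_\gamma$. The crucial algebraic observation---valid for \emph{any} $\rho'$, discrete or not---is that $D_\gamma^*\,i^*\hat\omega_{\rho'}=i^*\hat\omega_{\rho'}$ on the nose, because $\gamma\in\ker(\rho'\circ i)$. Since $\Mod(S')$ acts by isometries on reduced bounded cohomology, the triangle inequality reduces everything to showing $\|[D_\gamma^*i^*\hat\omega_\rho]-[i^*\hat\omega_\rho]\|_\infty>\epsilon(S)$. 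This is a separation statement between \emph{two discrete, faithful} classes with different ending laminations ($\lambda'$ versus $D_\gamma(\lambda')$), so the hypotheses of Theorem \ref{separation} are honestly met. The remaining issue is the one you correctly flag---$\epsilon(S')$ depends on the cover---and the paper handles it with a transfer-map computation: the test chains $V_j$ on $S$ are transferred to $S'$, and the ratio $|\hat\omega_\rho(V_j')|/\|V_j'\|_1$ is invariant under transfer, giving a separation constant depending only on $S$, not on $S'$. Your sketch does not produce this Dehn-twist reduction, which is what lets the argument bypass indiscreteness entirely.
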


We state here a rigidity result for marked Kleinian surface groups that follows from Theorem \ref{separation:1}, Theorem \ref{faithful:1}, and an application of the Ending Lamination Theorem.  This result is in the spirit of Soma's \emph{Mostow rigidity for hyperbolic manifolds with infinite volume} (\cite{soma:boundedsurfaces}, Theorem A and Theorem D).   Soma works in the setting of hyperbolic manifolds of infinite volume with bounded geometry.  These manifolds are a countable union of nowhere dense sets in the boundary of the deformation space of marked Kleinian groups of a specified isomorphism type. Soma's lower bound for the separation constant $\epsilon$ depends both on the topology of the surface and the injectivity radii of the two hyperbolic structures being compared, and it tends to zero as the injectivity radii do.  We do not restrict ourselves to working with manifolds with bounded geometry, and our separation constant does not depend on injectivity radii.
\begin{cor}
Let $S$ be a closed orientable surface of genus at least $2$.  There is a constant $\epsilon = \epsilon(S)$ such that the following holds.  Suppose that $\rho:\pi_1(S)\to \G$ is discrete, faithful,  without parabolics, and such that $M_\rho$ has two geometrically infinite ends.    Then for any other discrete representation $\rho': \pi_1(S)\to \G$, if \[\|[\hat\omega_\rho] - [\hat\omega_{\rho'}]\|_\infty < \epsilon,\]
then $\rho$ and $\rho'$ are conjugate.  
\end{cor}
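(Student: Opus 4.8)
The plan is to derive the corollary from Theorems~\ref{faithful:1} and~\ref{separation:1}, together with the Tameness Theorem (Agol; Calegari--Gabai) and the Ending Lamination Theorem (Minsky; Brock--Canary--Minsky). Fix $\epsilon=\epsilon(S)$ to be a positive number no larger than the separation constant supplied by Theorem~\ref{separation:1} for the group $\pi_1(S)$ and no larger than the constant supplied by Theorem~\ref{faithful:1} for $S$; since $S$ is closed of genus at least $2$ these depend only on $S$. I first record the structure of $M_\rho$: as $\rho$ is discrete, faithful, and without parabolics and $\pi_1(S)$ is a closed surface group, tameness gives $M_\rho\cong S\times\bR$, so $M_\rho$ has exactly two ends, and by hypothesis both are geometrically infinite; therefore $M_\rho$ has no parabolic cusps and no geometrically finite ends, and its complete list of marked end invariants is the pair of ending laminations $\lambda_+,\lambda_-$ recorded with respect to the marking induced by $\rho$. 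Two standard facts about this situation will be used: each of $\lambda_+,\lambda_-$ is a filling lamination on $S$, and $\lambda_+\ne\lambda_-$.

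The next step is to upgrade the hypotheses on $\rho'$. Given that $\rho'$ is discrete with $\|[\hat\omega_\rho]-[\hat\omega_{\rho'}]\|_\infty<\epsilon$, Theorem~\ref{faithful:1} applies and shows $\rho'$ is faithful; since $\pi_1(S)$ is torsion-free, $\im\rho'$ is torsion-free, so $\rho'$ has no elliptic elements. One must also see that $\rho'$ has no parabolic elements. Here I would invoke the general form of the separation estimate behind Theorem~\ref{separation:1} (Corollary~\ref{general}): were $\rho'$ to have an accidental parabolic along a curve $\gamma\subset S$, then $\gamma$ would belong to the end invariants of $M_{\rho'}$ and each geometrically infinite ending lamination of $M_{\rho'}$ would miss $\gamma$ and hence fail to fill $S$; as $\lambda_\pm$ fill $S$, the end invariants of $M_{\rho'}$ would then differ from those of $M_\rho$, and the general estimate would force $\|[\hat\omega_\rho]-[\hat\omega_{\rho'}]\|_\infty\ge\epsilon$, a contradiction. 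So $\im\rho'$ is a discrete, faithful, purely loxodromic Kleinian surface group.

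With $\rho'$ now discrete, faithful, and without parabolic or elliptic elements, I would run Theorem~\ref{separation:1} with $n=2$, $(\rho_1,\rho_2)=(\rho,\rho')$, and $(a_1,a_2)=(1,-1)$: since $\|[\hat\omega_\rho]-[\hat\omega_{\rho'}]\|_\infty<\epsilon$, the hypothesis of that theorem must fail, so no marked geometrically infinite end invariant of $M_\rho$ differs from the geometrically infinite end invariants of $M_{\rho'}$. Thus $\lambda_+$ and $\lambda_-$ both appear, with the correct marking, among the geometrically infinite ending laminations of $M_{\rho'}$; but $M_{\rho'}\cong S\times\bR$ has only two ends and $\lambda_+\ne\lambda_-$, so both ends of $M_{\rho'}$ are geometrically infinite and carry $\lambda_+$ and $\lambda_-$, and $M_{\rho'}$ has no geometrically finite ends. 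Combined with the absence of parabolics, the marked end invariants of $M_{\rho'}$ coincide with those of $M_\rho$. The Ending Lamination Theorem---which says a discrete faithful representation of $\pi_1(S)$ into $\G$ is determined up to conjugacy by its marked end invariants---then produces $g\in\G$ with $\rho'=g\rho g\inverse$, completing the proof.

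I expect the main obstacle to be the passage from ``the geometrically infinite end invariants agree'', which is all Theorem~\ref{separation:1} delivers, to ``all end invariants agree'', which the Ending Lamination Theorem requires. Ruling out geometrically finite ends of $M_{\rho'}$ is clean once the geometrically infinite ending laminations are pinned down, precisely because $\lambda_+\ne\lambda_-$ forces the two ends of $M_{\rho'}$ to realize $\lambda_+$ and $\lambda_-$ separately rather than having one end geometrically infinite and the other geometrically finite. Ruling out parabolics is more delicate, since Theorem~\ref{separation:1} as stated assumes the representations are purely loxodromic; making this quantitative---showing that an accidental parabolic alone forces a definite lower bound on the pseudo-norm difference---is where the strengthened separation statement is needed.
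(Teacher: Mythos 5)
Your overall structure is correct and is exactly the plan the paper announces for this corollary (Theorem~\ref{separation:1}, Theorem~\ref{faithful:1}, plus the Ending Lamination Theorem). You also correctly flag where the subtlety lies. Two points, though, keep the argument as written from closing.

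First, the step where you rule out accidental parabolics of $\rho'$ does not go through as stated, and you are right to be uneasy about it. Corollary~\ref{general} is a linear-independence statement, not a quantitative separation estimate, and both it and Theorem~\ref{separation:1} (and its proof, Theorem~\ref{separation}) explicitly assume the representations under comparison are parabolic-free. So ``the general estimate'' you invoke does not exist in the paper. The correct route is the same Dehn twist device used in the proof of Theorem~\ref{injective theorem}. If $\rho'$ is discrete and (by Theorem~\ref{faithful:1}) faithful, and $\rho'(\gamma)$ is parabolic, then $\gamma$ is automatically a simple closed curve (Bonahon). The twist $D_\gamma$ acts trivially on the end invariants of the cusped manifold $M_{\rho'}$---$D_\gamma$ fixes the parabolic locus and is isotopic to the identity on each complementary piece---so by the Ending Lamination Theorem $\rho'\circ(D_\gamma)_*$ and $\rho'$ are conjugate, hence $[D_\gamma^*\hat\omega_{\rho'}]=[\hat\omega_{\rho'}]$. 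On the other hand, since $\lambda_\pm$ fill $S$ and $\gamma$ is essential, Theorem~\ref{separation:1} applied to $\rho$ and $\rho\circ(D_\gamma)_*$ yields $\|[D_\gamma^*\hat\omega_\rho]-[\hat\omega_\rho]\|_\infty>\epsilon(S)$, while the mapping class group acts by isometries on reduced bounded cohomology. The triangle inequality then gives $\epsilon(S)<2\|[\hat\omega_\rho]-[\hat\omega_{\rho'}]\|_\infty$, a contradiction once you fix $\epsilon<\epsilon(S)/2$. This is why the constant in the corollary should be taken with that factor of $2$.

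Second, once $\rho'$ is shown to be purely loxodromic and the hypothesis of Theorem~\ref{separation:1} fails, you conclude too quickly that the \emph{ordered} end invariants of $M_{\rho'}$ match those of $M_\rho$. The hypothesis of the theorem compares each end invariant of $M_{\rho_i}$ against the \emph{set} of end invariants of $M_{\rho_j}$, so its failure only gives you that $\{\lambda_+,\lambda_-\}$ agree as unordered sets; the ordered pair for $\rho'$ could still be $(\lambda_+,\lambda_-)$ rather than $(\lambda_-,\lambda_+)$. The phrase ``with the correct marking'' is doing work you haven't justified. This is also fixable: if the ordered pair is swapped, then by the Ending Lamination Theorem $\rho'$ is conjugate to the complex conjugate representation $\bar\rho$, for which $[\hat\omega_{\bar\rho}]=-[\hat\omega_\rho]$, so $\|[\hat\omega_\rho]-[\hat\omega_{\rho'}]\|_\infty=2\|[\hat\omega_\rho]\|_\infty=2v_3$ by Soma's computation of the norm, which exceeds $\epsilon$. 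With these two repairs the argument closes, and it follows exactly the path the paper has in mind.
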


%This result  generalizes to arbitrary finitely generated Kleinian groups $\Gamma$ without parabolics, as is stated in Corollary \ref{generalseparation}.
The pseudonorm on degree three bounded cohomology is in general not a norm (see \cite{soma:nonBanach}, \cite{soma:zero}, and more recently \cite{sisto:zero}).  Let $\textnormal{ZN}^3(\g) \subset \Hb^3_b(\g;\bR)$ be the subspace consisting of non-trivial classes with zero pseudonorm.  The reduced space $\overline{\Hb}_b^3(\g;\bR) =\Hb^3_b(\g;\bR)/ \textnormal{ZN}^3(\g)$ is a Banach space.  As a consequence of Theorem \ref{main:1} and Theorem \ref{separation:1}, we can give a more concrete example of how our results can be applied.  
\begin{cor}\label{nice map}There is an injective map \[\Psi: \cEL(S) \to \overline{\Hb}_b^3(\pi_1(S);\bR)\] whose image is a linearly independent, discrete set.
\end{cor}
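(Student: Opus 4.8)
The strategy is to read off the corollary from Theorems \ref{main:1} and \ref{separation:1} once a natural family of representations indexed by $\cEL(S)$ has been fixed. First I would fix a basepoint $X\in\teich(S)$ and, for each $\lambda\in\cEL(S)$, use the existence half of the Ending Lamination Theorem (equivalently, Thurston's and Bonahon's construction of singly degenerate surface groups) to choose a discrete and faithful representation $\rho_\lambda: \pi_1(S)\to\G$, with no parabolic or elliptic elements, such that $M_{\rho_\lambda}\cong S\times\bR$ has one geometrically finite end with conformal boundary $X$ and one geometrically infinite end with ending lamination $\lambda$. I would then define
\[
\Psi(\lambda):=[\hat\omega_{\rho_\lambda}]+\textnormal{ZN}^3(\pi_1(S))\ \in\ \overline{\Hb}_b^3(\pi_1(S);\bR).
\]
Only the \emph{existence} of some such $\rho_\lambda$, together with a choice, is needed for $\Psi$ to be defined; the uniqueness clause of the Ending Lamination Theorem would make the choice canonical but is not logically required.

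Next I would record the structural remark that makes the deduction go: if $\lambda\neq\mu$ in $\cEL(S)$, then the geometrically infinite end invariants $\lambda$ and $\mu$ of $M_{\rho_\lambda}$ and $M_{\rho_\mu}$ differ, so every finite subcollection $\rho_{\lambda_1},\dots,\rho_{\lambda_n}$ with the $\lambda_i$ pairwise distinct satisfies the hypotheses of both Theorem \ref{main:1} and Theorem \ref{separation:1} (here $\pi_1(S)$ is isomorphic to a Kleinian group without parabolic or elliptic elements). I would also use the standard fact that, since $\textnormal{ZN}^3(\pi_1(S))$ is precisely the set of classes of zero pseudonorm, the quotient norm on $\overline{\Hb}_b^3(\pi_1(S);\bR)$ agrees with $\|\cdot\|_\infty$, which follows from a one-line triangle-inequality argument.

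With these in place, the three assertions are immediate. For linear independence of the image I would take pairwise distinct $\lambda_1,\dots,\lambda_n\in\cEL(S)$ and suppose $\sum_{i=1}^n a_i\Psi(\lambda_i)=0$ with some $a_i\neq0$; then $\|\sum_i a_i[\hat\omega_{\rho_{\lambda_i}}]\|_\infty=0$, contradicting the strict lower bound $\|\sum_i a_i[\hat\omega_{\rho_{\lambda_i}}]\|_\infty>\epsilon(\pi_1(S))\max_i|a_i|>0$ supplied by Theorem \ref{separation:1}. Injectivity of $\Psi$ is the case $n=2$, $(a_1,a_2)=(1,-1)$ of the same computation. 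For discreteness, Theorem \ref{separation:1} applied to that same case gives $\|\Psi(\lambda)-\Psi(\mu)\|=\|[\hat\omega_{\rho_\lambda}]-[\hat\omega_{\rho_\mu}]\|_\infty>\epsilon(\pi_1(S))$ whenever $\lambda\neq\mu$, so the image is uniformly $\epsilon(\pi_1(S))$-separated, hence a discrete (indeed closed) subset of $\overline{\Hb}_b^3(\pi_1(S);\bR)$.

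I do not anticipate a genuine obstacle: the corollary is a formal consequence of Theorems \ref{main:1} and \ref{separation:1}, and the only non-formal input is the realization of every $\lambda\in\cEL(S)$ as an ending lamination of an end of some $M_{\rho_\lambda}$ carrying no parabolics. The one delicate point is ensuring the construction introduces no parabolic elements, which is automatic when $S$ is closed; for a surface with punctures one would either restrict to the closed case or separately account for the resulting cusps.
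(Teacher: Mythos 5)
Your proposal is correct and follows essentially the same route as the paper: fix $X\in\teich(S)$, use Thurston's existence result (the paper cites the Double Limit Theorem, which is what underlies your "existence half of the Ending Lamination Theorem") to produce $\rho_\lambda$ with end invariants $(X,\lambda)$, define $\Psi(\lambda)$ to be the image of $[\hat\omega_{\rho_\lambda}]$ in the reduced space, and then invoke Theorems \ref{main:1} and \ref{separation:1}. The paper obtains the corollary by combining Corollary \ref{singleboundary} with Theorem \ref{separation}, and your observation that linear independence in the reduced space really requires the separation bound (not just Theorem \ref{main:1}) is accurate and correctly handled.
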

See \S\ref{main1} for the construction of the map $\Psi$ and for linear independence, and see \S\ref{main2} for discreteness of it's image.  Our techniques also apply to hyperbolic $3$-manifolds with compressible boundary.  Let $H_n$ be a genus $n$ closed handlebody.  The interior of $H_n$ supports many marked hyperbolic structures without parabolic cusps. The geometrically infinite hyperbolic structures without parabolic cusps are parameterized by the set of ending laminations in the Masur domain $\partial H_n$, up to an equivalence by certain mapping classes that reflect the topology of $H_n$.  We call this space of ending laminations $\cEL(\partial H_n, H_n)$ and define it more carefully in \S\ref{infiniteends}.  Indeed, there is an injective map  $\cEL(\partial H_n, H_n) \to \overline{\Hb}_b^3(F_n;\bR)$ whose image is a linearly independent, discrete set.  It is defined analogously to $\Psi$, and we prove linear independence and discreteness in \S\ref{main1} and \S\ref{main2}, respectively.

%The difficulty in the above theorem is minimizing the norm over all representatives of the class of $[\hat\omega_\g-\hat\omega_{\g'}]$.  
%In an earlier draft of this paper, we were only able to state We believe that the assumption that the projection of the end invariants of $M_\g$ onto every annular subsurface is uniformly bounded may be unnecessary, so that the collection of bounded fundamental classes in $\Hb_b^3(\g;\bR)$ is uniformly separated.  We remark that this condition includes the case that $M_\g$ has bounded geometry, which may not be obvious.  See \S\ref{main2} for details.  

The plan of the paper is as follows.  In \S\ref{prelims}, we give definitions of bounded cohomology of groups and spaces.  We also review the standard concepts and tools for working with hyperbolic 3-manifolds with infinite volume, setting notation for what follows.  In \S\ref{ELT}, we give an account of some of the tools used in the proof of the positive resolution of Thurston's Ending Lamination Conjecture.  We discuss markings, hierarchies, model manifolds, bi-Lipschitz model maps, and the existence of a class of embedded surfaces with nice geometric properties called extended split level surfaces.  This class of surfaces plays an important technical role in \S\ref{below}, and hierarchies reappear in \S\ref{main2}.  In \S\ref{below}, we prove the main technical lemma that allows us to make uniform estimates for the volume of certain homotopies between surfaces whose diameters may be tending toward infinity.  This is the main difficulty we must overcome while working with manifolds with unbounded geometry.  After establishing the main technical lemma, we obtain  lower bounds for the volumes of classes of straightened $3$-chains that exhaust an end of a manifold.  In \S\ref{sectionabove}, we provide a uniform upper bound for the same $3$-chains when straightened with respect to a different hyperbolic metric on the same manifold.  The lower bounds from \S\ref{below} and the upper bounds from \S\ref{sectionabove} constitute the two main ingredients for the proof of our main result in \S\ref{main1}.  We work first in the context of closed surface groups, and use the Tameness Theorem and the machinery of the proof of the general Ending Lamination Theorem to extend our results to arbitrary, finitely generated Kleinian groups without parabolics.  In \S\ref{main2}, we revisit the hierarchy machinery to give more refined lower bounds for the volume of chains, analogous to those we obtain in \S\ref{below}.  This is where we obtain our second main result controlling the pseudo-norm of linear combinations of bounded fundamental classes.

\section*{Acknowledgments} 
The author would like to thank Kenneth Bromberg for many hours of his time and for his patience.  Helpful conversations with Maria Beatrice Pozzetti inspired the contents of \S\ref{telescopesoftori}, which were useful for improving a result from a previous draft of this manuscript.  We would also like to thank the hospitality of the MSRI and partial support of the NSF under grants DMS-1246989 and DMS-1440140.

\section {Preliminaries}\label{prelims}
\subsection{Bounded Cohomology of Spaces}
Given a connected CW-complex $X$, we define a norm on the singular chain complex of $X$ as follows.  Let $\Sigma_n = \lbrace \sigma: \Delta_n\to X\rbrace$ be the collection of singular $n$-simplices.  Write a simplicial chain $A\in \Cb_n\left( X;\bR\right)$ as an $\bR$-linear combination \[A = \sum \alpha_\sigma\sigma,\] where each ${\sigma\in \Sigma_n}$.  The $1$-norm or \emph{Gromov} norm of $A$ is defined as \[\left\| A \right\|_ 1 = \sum \left| \alpha_\sigma\right|.\]
This  norm promotes the algebraic chain complex $\Cb_\bullet (X ;\bR)$ to a chain complex of normed linear spaces; the boundary operator is a bounded linear operator.  Keeping track of this additional structure, we can take the topological dual chain complex
\[\left(\Cb_\bullet (X;\bR),\partial, \| \cdot  \|_1\right)^*=\left(\Cb^\bullet_b (X;\bR),\delta, \| \cdot \|_\infty\right).\]
   The $\infty$-norm is naturally dual to the $1$-norm, so the dual chain complex consists of \emph{bounded} co-chains.  
Define the \emph{bounded cohomology} $\Hb_b^\bullet (X ;\bR)$ as the (co)-homology of this complex.  For any bounded $n$-co-chain, $\alpha\in \Cb_b^n(X;\bR)$, we have an equality \[\|\alpha\|_\infty = \sup_{\sigma\in \Sigma_n}\left|\alpha ( \sigma) \right|.\] The $\infty$-norm descends to a pseudo-norm on the level of bounded cohomology.  If $A\in \Hb_b^n(X;\bR)$ is a bounded class, the pseudonorm is described by \[\| A \|_\infty = \inf_{\alpha\in A} \|\alpha\|_\infty.\]We direct the reader to \cite{gromov:bdd} for a systematic treatment of bounded cohomology of topological spaces and  fundamental results.  

Matsumoto-Morita \cite{mm:banach} and Ivanov  \cite{ivanov:banach} prove independently that in degree 2, $\|  \cdot \|_\infty$ defines a norm in bounded cohomology, so that the space $\Hb_b^2(X;\bR)$ is a Banach space with respect to this norm.  In \cite{soma:nonBanach}, Soma shows that the pseudo-norm is in general not a norm in degree $\ge 3$.  In \cite{soma:zero}, Soma exhibits a family of linearly independent bounded classes depending on a continuous parameter with arbitrarily small representatives in $\Hb_b^3(S;\bR)$, thus showing that the zero-norm subspace of bounded cohomology in degree 3 is uncountably generated.   This work has recently been generalized in \cite{sisto:zero} by Franceschini \emph{et al.} to prove that the zero norm subspace of bounded cohomology in degree 3 of acylindrically hyperbolic groups is uncountably generated.
\subsection{Bounded Cohomology of Discrete Groups}
Let $G$ be a discrete group.  Then $G$ acts on functions $f: G^{n+1} \to \bR$ by $(g.f)(g_0, ..., g_n) = f(g\inverse g_0, ..., g\inverse g_n)$.  We define a co-chain complex for $G$ by considering the collection of $G$-invariant functions \[\Cb^n(G;\bR) = \lbrace f: G^{n+1} \to \bR: g.f = f, ~\forall g\in G \rbrace.\]  The homogeneous co-boundary operator $\delta$ for the trivial $G$ action on $\bR$ is, for ${f\in \Cb^n(G;\bR)}$, \[\delta f (g_0, ... ,g_{n+1}) = \sum_{i=0}^{n+1} (-1)^i f(g_0, ...,\widehat{g_{i}}, ..., g_{n+1}).\]    The co-boundary operator gives the collection $\Cb^\bullet(G;\bR)$ the structure of a (co)-chain complex.  An $n$-co-chain $f$ is \emph{bounded} if \[\|f\|_\infty = \sup|f(g_0, ..., g_{n})| < \infty,\] where the supremum is taken over all $(n+1)$-tuples $(g_0, ..., g_n) \in G^{n+1}$.  

The operator  ${\delta : \Cb^n_b(G;\bR) \to \Cb^{n+1}_b(G;\bR)}$ is a bounded linear operator with operator norm at most $n+2$, so the collection of bounded co-chains $\Cb^\bullet_b(G;\bR)$ forms a subcomplex of the ordinary co-chain complex. The (co)-homology of $(\Cb_b^\bullet(G;\bR),\delta)$ is called the \emph{bounded cohomology} of $G$, and we denote it $\Hb_b^\bullet (G;\bR)$.  The $\infty$-norm $\|\cdot\|_\infty$ descends to a pseudo-norm on bounded cohomology in the usual way.  Brooks \cite{brooks:remarks}, Gromov \cite{gromov:bdd}, and Ivanov \cite{ivanov:isometric} proved the remarkable fact that for any countable connected CW-complex $M$, the classifying map $K(\pi_1(M),1)\to M$ induces an isometric isomorphism $\Hb^\bullet_b(M;\bR)\to \Hb^\bullet_b(\pi_1 (M);\bR)$.  We therefore identify the two spaces $\Hb^\bullet_b(\pi_1(M);\bR)= \Hb^\bullet_b(M;\bR)$.  

\subsection{The Bounded Fundamental Class}\label{boundedfundamtentalclass}
Fix a Kleinian group $\g\le \G$.  Let $\omega\in \Omega^3(M_\g)$ be such that pullback of $\omega$ by the covering projection $\bH^3 \to \bH^3 / \g =M_\g$ is the  Riemannian volume form on $\bH^3$.  Suppose $\sigma: \Delta_3\to M_\g$ is a singular 3-simplex.  We have a chain map  \cite{thurston:notes}
\[\str : \Cb_\bullet (M_\g) \to \Cb_\bullet (M_\g)\] defined by homotoping $\sigma$,  relative to its vertex set, to a totally geodesic hyperbolic tetrahedron $\str \sigma$.  The co-chain \[ \hat\omega(\sigma)= \int_{\str\sigma}\omega\] measures the signed hyperbolic volume of the straightening of $\sigma$.  
Any geodesic tetrahedron in $\bH^3$ is contained in an ideal geodesic tetrahedron, and there is an upper bound $v_3$ on this volume which is maximized by regular ideal geodesic tetrahedra \cite{thurston:notes}.  Thus \[\displaystyle \| \hat\omega\|_\infty = \sup_{ \sigma: \Delta_3 \to M_\g} |\hat\omega(\sigma)| = v_3.\] 
We use the fact that $\str$ is a chain map, together with Stokes' Theorem to observe that if $\upsilon: \Delta_4 \to M_\g$ is any singular 4-simplex, 
\[\delta \hat\omega(\upsilon) = \int_{\str  \partial  \upsilon  } \omega= \int_{\partial \str \upsilon} \omega = \int_{\str \upsilon} d\omega = 0,\]
because $d\omega \in \Omega^4(M_\g)= \{0\}$.  The class $[\hat\omega]\in \Hb^3_b(M_\g;\bR)$ is the \emph{bounded fundamental class} of $M_\g$.  An isomorphism  $\rho: \pi_1(M)\to \g$ induces a homotopy class of maps $f: M \to M_\g$ such that ${f_* = \rho}$.  Thus we may pullback the class $[\hat\omega]$ by $f$ to obtain ${[\hat\omega_\rho]: = [f^*\hat\omega] \in \Hb^3_b(M;\bR)}\cong \Hb_b^3(\pi_1(M);\bR)$.  

\subsection{The ends of hyperbolic $3$-manifolds} 
A hyperbolic manifold is said to be \emph{topologically tame} if it is homeomorphic to the interior of a compact $3$-manifold.  A hyperbolic manifold with finitely generated fundamental group is said to be \emph{geometrically tame} if all of its ends are geometrically finite or simply degenerate (see definitions below).  Bonahon \cite{bon} proved that any hyperbolic structure supported on the interior of a compact manifold with incompressible boundary is geometrically tame.  Canary \cite{canary:ends} proved that topological tameness implies geometric tameness.  More generally, if $\g\le \G$ is a finitely generated Kleinian group without elliptic elements, by the Tameness Theorem of Agol \cite{agol:tame} and Calegari--Gabai \cite{calegari-gabai:tame} there is a compact three manifold $K_\g$ and a diffeomorphism $h: M_\g=\bH^3/\g \to \interior K_\g$.   Other authors have given accounts of the Tameness Theorem, notably Soma \cite{soma:tameness}, who simplified the key argument of \cite{calegari-gabai:tame}.  Equip $K_\g$ with a Riemannian metric, and let $S$ be a component of $\partial K_\g$.  In this paper, we will ignore parabolic cusps, so $S$ is a closed surface. We apply the Collar Neighborhood Theorem to obtain a neighborhood of $S$ with closure $F_S\subset K_\g$ and a diffeomorphism $F_S\to S\times I $, where $I$ will always denote the closed interval $[0,1]$.  Let $E_S= h\inverse(F_S)$; we have a diffeomorphism $E_S \to S\times [0,\infty)$.  Fix such an identification so that we may think of $S\times [0,\infty)$ as coordinates for $E_S$.  Call $E_S$ a \emph{neighborhood of S}.  We remark that a neighborhood of $S$ is the closure of a neighborhood of an end of $M_\g$ in the usual notion of an end of a topological space.

Define $\cE(\g)$ to be the collection of connected components of the boundary of $K_\g$.  For each $S\in \cE(\g)$, we may find neighborhoods as above $E_S \cong S\times [0,\infty)$ such that $E_S\cap E_{S'}=\emptyset$ if $S\not=S'$.  Then \[\cK(M_\g)=\displaystyle M_\g\setminus \bigcup _{S\in \cE(\g)} \interior E_S\] is diffeomorphic to $K_\g$, and in particular, it's inclusion is a homotopy equivalence.  For each boundary component $S'$ of $\cK(M_\g)$, the inclusion $S'\to M_\g$ is homotopic in $K_\g$ to a unique $S\in \cE(\g)$.  Identify $S$ with $S'$ so that by abuse of notation, if $S\in \cE(\g)$ write $i: S\to M_\g$ to denote the inclusion of $S'\to \partial \cK(M_\g)$ which is homotopic to $ S\to \partial K_\g$ in $K_\g$. 

\subsection{Geometrically finite ends}   
The {\it limit set} $\Lambda_\g$ of a Kleinian group $\g\le \G$ is the set of accumulation points of $\g.x$ for some (any) $x\in \partial \bH^3$.
Denote by $\cH(\Lambda_\g)\subset \bH^3$ the convex hull of $\Lambda_\g$.  The \emph{convex core} of $M_\g$ is $\cC\cC(M_\g) = \cH(\Lambda_\g)/\g$.     

Say that $S\in \cE(\g)$  is {\it geometrically finite} if there is some neighborhood $E_S$  disjoint from $\cC\cC(M_\g)$.  Call $S$ {\it geometrically infinite} otherwise.
Since $K_\g$ is compact, the collection $\cE(\g)$ is finite; enumerate its elements $\cE(\g) = \{ S_1, ... ,S_{k(\g)}\}$. 
By the Ending Lamination Theorem (see \S\ref{ELT}) the isometry type of $M_\g$ is uniquely determined by its topological type together with its {\it end invariants}  $(\nu(S_1), ... , \nu(S_{k(\g)}))$.  We give a description of the end invariant $\nu(S)$.

Suppose $S\in \cE(\g)$ is geometrically finite.  A Kleinian group $\g$ acts properly discontinuously on its {\it domain of discontinuity } $\Omega_{\g} = \hat\bC \setminus \Lambda_\g$, and $\Omega_{\g}\not=\emptyset$ when $S$ is geometrically finite. Indeed a component $\tilde{S}$ of $\Omega_\g$ carries an action by $\g$ which is free and properly discontinuous by conformal automorphisms, inducing a conformal structure $X=\tilde{S}/ \g$ on $S$.  Moreover, $X$ is a boundary component of the manifold $\overline{M_\g} = (\bH^3\cup\Omega_\g) /\g$.
The inclusion $i:S\to M_\g$ is homotopic in $\overline{M_\g}$ to inclusion $S\to X$; thus $(i, X)$ defines a point in Teichm\"uller space $\teich(S)$. To each geometrically finite component $S$, we associate the end invariant $\nu(S) = (i,X)$. 
 
\subsection{Geometrically infinite ends}\label{infiniteends}
Suppose $S\in \cE(\g)$ is geometrically infinite. Then $E_S$ is \emph{simply degenerate}.  That is, there is a sequence $\{\gamma_i'\}$ of homotopically essential, closed geodesic curves exiting $E_S$.  Each $\gamma_i'$ is homotopic in $E_S$ to a simple closed curve $\gamma_i\subset S\times\{0\}$.   Moreover, we may find such a sequence such that the length $\ell_{M_\g}(\gamma_i')\le L_0$, where $L_0$ is the Bers constant for $S$.  Equip $S=S\times \{0\}$ with any hyperbolic metric.  Find geodesic representatives $\gamma_i^*\subset S$ with respect to this metric.  The sequence $\gamma_i^*$ converges in the Hausdorff topology on closed sets of $S$ to a geodesic lamination $\lambda_S'\subset S$.  By the classification theorem for geodesic laminations, $\lambda_S'$ contains at most finitely many isolated leaves $\ell_1, ..., \ell_k$.  Let $\lambda_S = \lambda_S'\setminus \{\ell_1, ..., \ell_k\}$.   Thurston \cite{thurston:notes}, Bonahon \cite{bon}, and Canary \cite{canary:ends} show, in various contexts, that $\lambda_S$ is minimal, filling, and does not depend on the sequence $\gamma_i$.  Moreover, for any two hyperbolic structures on $S$, the spaces of geodesic laminations are canonically homeomorphic.  This \emph{ending lamination} is the end invariant $\nu(S) = \lambda_S$.  Call $\cEL(S)$ the set of minimal, filling geodesic laminations.  

In the case that the end $S$ is \emph{compressible}, \ie the inclusion $S\to K_\g$ does not induce an injection on $\pi_1$, there is more to say.   A meridian $m\subset S\subset \partial K_\g$ is a homotopically non-trivial simple closed curve in $S$ which bounds a disk in $K_\g$ (and hence is nullhomotopic, there).  A lamination $\lambda\subset S$ is said to belong to the \emph{Masur domain} if $\lambda$ is not contained in the Hausdorff limit of any sequence of meridians.  Let $\Mod_0(S,M_\g)\le \Mod(S)$ be the group of homotopy classes of orientation preserving self homeomorphisms of $S$ which extend to homeomorphisms of $K_\g$ homotopic to the identity on $K_\g$.  If two laminations $\lambda, \lambda'\in \cEL(S)$  differ by a mapping $\varphi\in \Mod_0(S,M_\g)$, then they  are indistinguishable in $M_\g$. That is, if  $\lambda'=\varphi(\lambda)$ and if $\alpha_j$ is a sequence of curves converging to $\lambda$ in $S$, then ${\varphi(\alpha_j)\to \varphi(\lambda)=\lambda'}$, as $j\to \infty$.  However, for each $j$, the geodesic representatives of these curves coincide in $M_\g$, \ie ${i(\alpha_j)^*=i(\varphi(\alpha_j))^*\subset M_\g}$.    

Below, a theorem of Namazi--Souto supplies a converse to the observation above, but first we need a definition.  Let $S_0$ be a closed, oriented surface of negative Euler characteristic, and suppose $h: S_0\to M_\g$ is continuous.  
 A \emph{pleated surface} is a length preserving map $f: X\to M$ from a hyperbolic surface $X\in \teich(S_0)$ to $M_\g$ homotopic to $h$ and such that every point $p\in X$ is contained in a geodesic segment which is mapped isometrically.  A lamination $\lambda\subset S_0$ is \emph{realized} in $M_\g$ if there is a pleated surface $f: X\to M_\g$ that maps every leaf of $\lambda$ to a geodesic in $M_\g$.

\begin{theorem}[\cite{namazi-souto:density}, Theorem 1.4] \label{NS}
Suppose that $\phi_0:M_{\g_1}\to M_{\g_2}$ is a homotopy equivalence of hyperbolic 3-manifolds and that that $\lambda$ is a filling, minimal Masur domain lamination on an end $S\in \cE(\g_1)$ such that  $\phi_0(\lambda)\subset M_{\g_2}$ is \emph{not} realized.  Then $\phi_0$ is homotopic, relative to the complement of a regular neighborhood of $i(S)$ to a map ${\phi: M_{\g_1}\to M_{\g_2}}$ such that $\phi$ restricts to a homeomorphism $S\to S'$ for some $S' \in \cE(\g_2)$ and $\nu(S')=\phi(\lambda)$.
\end{theorem}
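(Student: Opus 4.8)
The plan is to use non-realizability of $\lambda$ to manufacture a geometrically infinite end of $M_{\g_2}$ whose ending lamination is $\phi_0(\lambda)$, and then to upgrade $\phi_0$ to a homeomorphism near that end. It is convenient to pass to the covers $\widehat M_2\to M_{\g_2}$ corresponding to $(\phi_0)_* i_*\pi_1(S)$ and $\widehat M_1\to M_{\g_1}$ corresponding to $i_*\pi_1(S)$, over which $\phi_0$ lifts to a homotopy equivalence $\widehat\phi_0$; when $i(S)$ is incompressible these are Kleinian surface groups with $\widehat M_1\cong S\times\bR$, and in all cases both covers are tame by the Tameness Theorem. Whether a leaf of $\lambda$ is sent to a geodesic is detected upstairs as well as downstairs, so it suffices to produce the degenerate end and its ending lamination in $\widehat M_2$. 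Fix essential simple closed curves $\alpha_n\subset S$ with $\alpha_n\to\lambda$ in the Hausdorff topology; when $S$ is compressible the Masur domain hypothesis lets us take the $\alpha_n$ in the Masur domain, so none of them is a meridian and each $\widehat\phi_0(i(\alpha_n))$ is an essential loop.

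Next I would realize each $\alpha_n$ by a pleated surface $g_n\colon X_n\to \widehat M_2$ in the homotopy class of $\widehat\phi_0\circ i$ carrying $\alpha_n$ to its geodesic representative $\alpha_n^{*}$. The key dichotomy, which follows from Thurston's compactness theory for pleated surfaces together with the absence of parabolics in $\g_2$, is that either a subsequence of the $g_n$ converges to a pleated surface $g_\infty\colon X_\infty\to \widehat M_2$ in the same homotopy class, or the $g_n$ eventually leave every compact subset of $\widehat M_2$. In the first case the bending loci of the $g_n$ accumulate onto a lamination containing $\lim\alpha_n=\lambda$ (here one uses that $\lambda$ is minimal and filling), so $g_\infty$ maps every leaf of $\lambda$ to a geodesic; pushing down gives a pleated surface realizing $\phi_0(\lambda)$ in $M_{\g_2}$, contrary to hypothesis. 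Hence we are in the second case, and in particular the geodesics $\alpha_n^{*}$ exit every compact set of $\widehat M_2$.

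Now invoke tameness: $\widehat M_2$ has finitely many ends with product neighborhoods $E_{S'}\cong S'\times[0,\infty)$. Since the connected surfaces $g_n(X_n)$ eventually miss any fixed compact core, for large $n$ they lie in a single $E_{S'}$, in fact in $S'\times[t_n,\infty)$ with $t_n\to\infty$; this exhibits $S'\in\cE(\g_2)$ as simply degenerate with $\alpha_n^{*}$ exiting it. Because $\widehat\phi_0$ is a homotopy equivalence that, off a regular neighborhood of $i(S)$, already carries a compact core into a compact core, standard $3$-manifold topology (Waldhausen's theorem in the incompressible case, and in the compressible case the product structure on the degenerate end together with the Masur domain condition to exclude compressions) shows $\widehat\phi_0$ is homotopic, rel the complement of that neighborhood, to a map restricting to a homeomorphism $S\to S'$; pushing the homotopy down produces $\phi$. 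Finally $\alpha_n\to\lambda$ gives $\phi(\alpha_n)\to\phi(\lambda)$, and since the geodesic representatives $\phi(\alpha_n)^{*}$ exit the end $S'$, the description of ending laminations recalled in \S\ref{infiniteends} — minimal, filling, and independent of the exiting sequence — forces $\nu(S')=\phi(\lambda)$.

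The step I expect to be the main obstacle is the compressible case: showing that the end $S'$ genuinely carries a homeomorphic copy of $S$ rather than a compression of it, and that the homotopy from $\phi_0$ to $\phi$ can be supported near $i(S)$. This is precisely where the Masur domain hypothesis is indispensable, since it prevents the $\alpha_n$ from being Hausdorff-close to meridians, so they remain essential and incompressible in $\widehat M_2$ and cannot let the degenerate product structure collapse across a compressing disk. The other delicate point, handled by the pleated-surface machinery of Thurston, Bonahon, and Canary, is making the convergence-or-escape dichotomy rigorous — in particular the continuity of realizability under geometric limits and the exclusion of escape into Margulis tubes, which is where the no-parabolics hypothesis enters.
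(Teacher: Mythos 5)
The paper does not prove this statement; it is cited verbatim as Theorem 1.4 of Namazi--Souto, with only the remark after it (recalling Otal's escape criterion) giving any indication of the underlying mechanism. So there is no paper proof to compare against, and I am assessing your sketch against the known argument.

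Your high-level strategy is the right one: pass to the covers corresponding to $i_*\pi_1(S)$ and its $\phi_0$-image, use non-realizability together with the convergence/escape dichotomy for pleated surfaces (equivalently, Otal's theorem quoted by the paper right after the statement) to force $\widehat\phi_0(i(\alpha_n))^*$ to exit every compact set, then use tameness to identify a degenerate product end absorbing them, and finally upgrade the homotopy equivalence near that end to a homeomorphism. Two steps are genuinely incomplete, though. First, you locate the degenerate end $S'$ in the \emph{cover} $\widehat M_2$ and then write $S'\in\cE(\g_2)$; descending to an end of $M_{\g_2}$ requires Canary's Covering Theorem to know the restriction of $\widehat M_2\to M_{\g_2}$ to a neighborhood of $S'$ is a finite cover onto a neighborhood of an end, and one still has to check it is degree one before one can speak of a homeomorphism $S\to S'$ downstairs. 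Second, the sentence invoking ``standard $3$-manifold topology'' to produce the homotopy rel the complement of a neighborhood of $i(S)$ to a map restricting to a homeomorphism $S\to S'$ is where essentially all the content of Namazi--Souto's Theorem 1.4 lives, particularly in the compressible case: one must rule out that $\phi_0$ wraps $S$ onto $S'$ with degree larger than one or folds across a compressing disk, and this takes a careful interplay of the end's product structure, the Masur domain hypothesis, and the behavior of $\phi_0$ on $\pi_1$ that cannot be dismissed as standard. You correctly flag this as the hard point, but flagging it is not the same as closing it, so what you have is a sound outline rather than a proof.
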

With notation as in Theorem \ref{NS},  work of Otal \cite{otal:thesis} implies that if $\lambda$ is a minimal, filling Masur domain lamination on $S\in \cE(\g_1)$ that is not realized by a pleated surface homotopic to $\phi_0|_{S}:S\to M_{\g_2}$, then if $\alpha_j\subset S$ is any sequence of simple closed curves converging to $\lambda$, then $\{\phi_0(\alpha_j)^*\}$ must leave every compact set in $M_{\g_2}$.  For an account of Otal's work and some generalizations, see \cite{KS:nonrealize} \S4.

We will think of the end invariant for a geometrically infinite compressible end $S\in \cE(\g)$ as an equivalence class of minimal, filling Masur domain laminations up to the action of $\Mod_0(S,M_\g)$; call this collection $\cEL(S,M_\g)$.  We associate to $S$ the end invariant $\nu(S) = \lambda_S \in \cEL(S,M_\g)$.  

\subsection{Marked Manifolds}
Suppose that $N_0$ and $N_1$ are complete hyperbolic 3-manifolds without parabolic cusps, and $f_i: M_\g \to N_i$ is a homotopy equivalence for each $i=0, 1$.  Say that $(f_0,N_0)\sim (f_1,N_1)$ are equivalent if there is an orientation preserving isometry $\phi : N_0 \to N_1$ such that $\phi\circ f_0$ is homotopic to $f_1$.  Define $\Hyp_0(M_\g)$ to be a collection of all such pairs up to equivalence.  

For each $i$, $\pi_1(N_i)$ acts on $\bH^3$ as the group of deck transformations for the cover $\bH^3\to N_i$, which is defined up to conjugacy in $\G$.  Thus $\pi_1(N_i) = \g_i \le \G$ is Kleinian.  The marking $f_i$ induces an isomorphism $\rho_i = {f_i}_* : \pi_1(M_\g)=\g \to \g_i$ where $\g_i$ contains no parabolics, because $N_i$ had no parabolic cusps.  So we may also think of $\Hyp_0(M_\g)$ as the collection of discrete, faithful representations without parabolics $\rho: \g\to \G$, up to conjugacy in $\G$.

\subsection{Simplicial Hyperbolic Surfaces}

A \emph {triangulation} $\cT$ of a closed surface $S$, is a 3-tuple $\cT = (V, A, T)$.  $V$ is a finite set of vertices.  The collection $A= \{\alpha_i : \Delta_1\to S\}\subset \Cb_1(S)$ is a maximal arc system for $(S,V)$;  $T\subset \Cb_2(S)$ is a collection of 2-simplices satisfying the relevant properties.  A triangulation determines a 2-chain representing the fundamental class $[S]\in \Hb_2(S;\bZ)$. By abuse of notation, we write  $\cT= \displaystyle \sum_{\tau \in T}\tau\in [S]$.

A \emph{pants decomposition} $\cP$ of $S$ is a maximal collection of pairwise non-homotopic, disjoint, simple closed curves on $S$.  We will be interested in triangulations $\cT_\cP$ with $3g-3$ vertices so that for each $\alpha\in \cP$, there is exactly one vertex $v_\alpha$ and one arc joining $v_\alpha$ to itself in the homotopy class of $\alpha$.  Call such a $\cT_\cP$ \emph{adapted} to $\cP$.   The complement of $\cP$ is a disjoint union of $2g-2$ subsurfaces homeomorphic to a three-times punctures sphere, which we call a \emph{pair of pants}.  
By a simple Euler characteristic computation, for any pants decomposition $\cP$, there is a triangulation $\cT_\cP= (V, A, T)$ adapted to $\cP$  such that $|V|=3g-3$, $|A| = 15g-15$ and $|T| = 10g-10$.   

\begin{figure}[h]
\includegraphics[height = 1in]{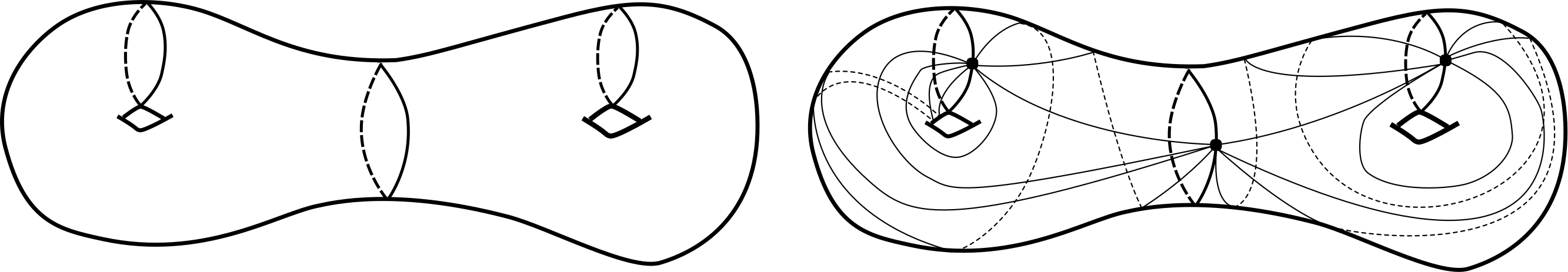}
\caption{A pants decomposition $\cP$ of $S$ and a triangulation $\cT_\cP$ adapted to $\cP$}
\label{triangulationsfigure}
\end{figure}

The following definitions are due to Bonahon \cite{bon}.  A \emph {simplicial pre-hyperbolic surface} is a pair $(F,\cT)$ where $F :S\to M_\g$ is a continuous map; $\cT$ is a triangulation so that for each $\alpha\in A$, ${\im F\circ \alpha \subset M_\g}$ is a geodesic segment, and for each $\tau\in \cT$, $\im  F\circ \tau$ is totally geodesic.  We call $\cT$ the \emph{triangulation associated} to $F$.  A simplicial pre-hyperbolic surface is a \emph{simplicial hyperbolic surface} if the cone angle about each vertex in the metric on $S$ induced by $F$ is at least $2\pi$.  
If $\cT_\cP$ is adapted to $\cP$, and $(F',\cT_\cP)$ is a pre-simplicial hyperbolic surface, we may homotope $F'$ to a map $F$ such that for each $\alpha\in \cP$, $F$ maps $\alpha$ to it's geodesic representative, \ie such that $F(\alpha) = F(\alpha)^*$, and $(F,\cT_\cP)$ is simplicial hyperbolic.  Notice here that for any (pre-)simplicial hyperbolic surface $(F,\cT)$, \[ \str F_*\cT = F_*\cT.\]

From now on, given a continuous map $F$, we will let $F_*=F$ denote the map induced on chains.  We break with this convention for emphasis.  We will make use of the following theorem for simplicial hyperbolic surfaces,   due to Bonahon \cite{bon}.
\begin{thm}[Bounded Diameter Lemma]
For any compact set $K\subset M_\g$, there is a compact set $K'\subset M_\g$ such that if $F: S\to M_\g$ is a simplicial hyperbolic surface and $\im F \cap K\not=\emptyset$, then $\im F\subset K'$.
\end{thm}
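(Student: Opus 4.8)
The plan is to run the standard Gauss--Bonnet / thick--thin argument. Write $\cT$ for the triangulation associated to $F$ and let $d_F$ be the path metric that $F$ induces on $S$. Since $F$ maps each $2$--simplex totally geodesically, $(S,d_F)$ is a hyperbolic cone surface (curvature $-1$ off the vertices, cone angle $\ge 2\pi$ at each vertex), and $F\colon (S,d_F)\to M_\g$ is $1$--Lipschitz because each simplex is mapped isometrically onto a geodesic simplex, so $F$ can only decrease the lengths of paths. The first step is Gauss--Bonnet for cone surfaces, $-\area(S,d_F)+\sum_v(2\pi-\theta_v)=2\pi\chi(S)$: since every $\theta_v\ge 2\pi$, each term of the sum is $\le 0$, whence $\area(S,d_F)\le 2\pi|\chi(S)|=:A_0$.

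Next I would apply the thick--thin decomposition of $(S,d_F)$ with a constant $\mu$ chosen smaller than both the surface Margulis constant and the $3$--dimensional Margulis constant $\epsilon_3$. A component of the $\mu$--thick part has injectivity radius $\ge\mu/2$ and area $\le A_0$, hence is covered by a number of $\tfrac{\mu}{4}$--balls bounded in terms of $A_0$; so the thick part has at most $N_0=N_0(\chi(S))$ components, each of $d_F$--diameter at most some $D_0=D_0(\chi(S))$. The $\mu$--thin part is a disjoint union of at most $3g-3$ standard collars of simple closed geodesics, and the Collar Lemma together with $\area\le A_0$ bounds the width of a collar about a geodesic of length $\ell$ by $\operatorname{arcsinh}(A_0/2\ell)$. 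Pushing forward by $F$: each thick component maps to a set of $M_\g$--diameter $\le D_0$; a short geodesic $\gamma\subset S$ maps to a loop $F(\gamma)$ of $M_\g$--length $<\mu<\epsilon_3$ which, using that $F$ is $\pi_1$--injective (as it is in Bonahon's incompressible setting), is essential and hence lies in a Margulis tube $T_\gamma$ about its geodesic representative $\gamma^\ast$; and the collar about $\gamma$ maps into a neighborhood of $T_\gamma$ whose diameter is controlled by $\ell_{M_\g}(\gamma^\ast)$ alone.

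To finish, observe that $\im F$ is connected, meets $K$, and is the union of the $\le N_0$ bounded--diameter images of thick components glued along the $\le 3g-3$ collar images --- a connected ``graph'' of boundedly many boundedly--sized pieces, with the one caveat that the size of a collar image depends on how far out in $M_\g$ it sits. Starting at a piece meeting $K$ and exhausting this graph, one obtains a recursion $r_{k+1}\le r_k+D_0+\bigl(\text{max diameter of a collar image of a tube meeting }\cN{r_k+D_0}{K}\bigr)$ for the distance $r_k$ from $K$ reached after $k$ steps. Since $M_\g$ is fixed, every compact subset of $M_\g$ meets only finitely many Margulis tubes, so the tubes meeting $\cN{r_k+D_0}{K}$ have core lengths bounded below and the added terms are bounded. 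Iterating at most $N_0+3g-3$ times yields a finite $R$, and $K':=\overline{\cN{R}{K}}$ --- depending on $M_\g$, $K$, and $\chi(S)$ but not on $F$ --- has the required property.

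The main obstacle is exactly this last point: bounding how deeply a thin collar of $(S,d_F)$ can reach into a Margulis tube of $M_\g$. Because $M_\g$ can have unbounded geometry its tubes can be arbitrarily deep, so one must genuinely play the Collar Lemma bound on collar width against the tube radius and invoke connectedness of $\im F$ back to the fixed set $K$ (equivalently, local finiteness of the tube system) to close the recursion. This is also the step forcing the $\pi_1$--injectivity of $F$: otherwise a short essential curve of $S$ could become null--homotopic in $M_\g$, its collar could map onto a long thin finger trailing a short contractible loop, and no uniform diameter bound could hold.
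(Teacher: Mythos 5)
The paper does not prove this statement: it cites it directly to Bonahon \cite{bon}, so there is no ``paper's proof'' to compare against. Your argument is the standard one underlying Bonahon's Bounded Diameter Lemma --- area bound via Gauss--Bonnet on the cone surface $(S,d_F)$, thick--thin decomposition, $1$--Lipschitz push-forward, and the local finiteness of the Margulis tube system of the fixed target $M_\g$ to close up --- and it is correct.

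A few remarks on points you handled well but that are worth making fully explicit. The recursion is the right device: the $\mu$--thick part of $(S,d_F)$ need not be connected (it has up to $2g-2$ components, glued along at most $3g-3$ collars), so the naive ``thick part has bounded diameter, hence stays near $K$'' argument is not available and one really must exhaust the graph of pieces. The length comparison that powers the bound on a collar image is $\ell_{M_\g}(\gamma^\ast)\le\ell_{M_\g}(F(\gamma))\le\ell_{d_F}(\gamma)$, so the Collar Lemma width $w(\ell_{d_F}(\gamma))\le w(\ell_{M_\g}(\gamma^\ast))$ and the tube radius are both monotone functions of the core length $\ell_{M_\g}(\gamma^\ast)$; the lower bound on that core length over tubes meeting a fixed compact set is what makes the recursion $F$--independent. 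Finally, you are right to flag that $\pi_1$--injectivity (or at least the weaker fact that the relevant short curves of $(S,d_F)$ remain essential in $M_\g$) is an unstated hypothesis: the lemma as literally written would fail for a compressible $F$. Bonahon's setting is incompressible, and in every use in this paper $F$ is in the homotopy class of an embedded level surface of an end, so injectivity holds; but the statement as transcribed omits the hypothesis.
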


\section{Hierarchies and Models}\label{ELT}

If $\rho: \pi_1(S)\to \g\le \G$ is discrete and faithful, then $\rho$ induces a homotopy class $[f]$ of orientation preserving mappings $f: S\times \bR \to M_\rho$.  The ends $\cE(\g)$ are the surfaces $S^-=S\times \{-\infty\}$ and $S^+ = S\times \{+\infty\}$ which are the boundary components of the compact manifold $S\times [-\infty, +\infty]$.  Define ${\nu= (\nu(S^-),\nu(S^+))=(\nu^-,\nu^+)}$.

The machinery used in the proof of Thurston's Ending Lamination Theorem is extremely useful for understanding the structure of geometrically infinite ends of hyperbolic $3$-manifolds.  This machinery arises as the result of a detailed analysis of the geometry of the \emph{curve complex} $\cC(S)$ of $S$ whose vertices are non-trivial homotopy classes of simple closed curves.  A $k$-simplex spans vertices $v_0, v_1, ..., v_k$ if the curves they represent can be realized disjointly on $S$.  The curve complexes of subsurfaces and subsurface projections  also play an important role.  %We refer the reader to \cite{masur-minsky:complex2}, \S2.4 for more details.  
There is a hierarchical structure to the curve complex; Masur and Minsky develop a theory of hierarchies of tight geodesics in \cite{masur-minsky:complex1} and \cite{masur-minsky:complex2}.  From the end invariants $\nu$, Minksy in  \cite{minsky:ELTI}  constructs a hierarchy $H_\nu$, and from the hierarchy, a model manifold $M_\nu$ made out of blocks and tubes.   The thick part of the model manifold admits a Lipschitz mapping into $\cC\cC(M_\rho)$.  Brock--Canary--Minsky \cite{brock-canary-minsky:ELTII} extend the model map to the thin parts of $M_\nu$ and promote it into a $K$-bi-Lipschitz homeomorphism in the homotopy class determined by $\rho$, where $K=K(S)$.  We state here a special case of the Ending Lamination Theorem.

\begin{thm}[Ending Lamination Theorem for Closed Surface Groups Without Parabolics]
Let $\g\le \G$ be a Kleinian surface group, and $(f_0,N_0), (f_1,N_1)\in \Hyp_0(M_\g)$.  There is an orientation preserving isometry $\phi :N_0\to N_1$ such that $\phi \circ f_0\sim f_1$ if and only if $\nu(N_0) = \nu(N_1)$.
\end{thm}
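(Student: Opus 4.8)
We sketch a proof, built out of the machinery recalled above together with the classical quasiconformal rigidity theory for Kleinian groups.

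First I would dispatch the easy direction. Suppose $\phi : N_0 \to N_1$ is an orientation-preserving isometry with $\phi \circ f_0 \sim f_1$. Then $\phi$ carries the ends of $N_0$ onto those of $N_1$ compatibly with the identifications of $\cE(\g_i)$ with $\{S^-,S^+\}$ coming from $f_0$ and $f_1$. An isometry sends closed geodesics to closed geodesics of equal length and extends to a conformal diffeomorphism $\overline{N_0}\to\overline{N_1}$ of the associated Kleinian manifolds, so it preserves: (a) whether a given end meets the convex core, hence geometric finiteness/infiniteness; (b) the marked conformal structure $\tilde S/\g$ on a geometrically finite end; and (c) the Hausdorff limit, in a fixed hyperbolic metric on $S$, of a Bers-bounded exiting sequence of closed geodesics, hence the ending lamination on a geometrically infinite end. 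Therefore $\nu(N_0)=\nu(N_1)$.

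For the converse, put $\nu := \nu(N_0)=\nu(N_1)$ and $\g_i = \rho_i(\g)$ with $\rho_i = (f_i)_*$. The plan is to build an isometry by first producing a bi-Lipschitz homeomorphism and then rigidifying it. From $\nu$ alone, Minsky's construction yields the hierarchy $H_\nu$ and the model manifold $M_\nu$, and by the Brock--Canary--Minsky theorem there is for each $i$ a $K(S)$-bi-Lipschitz homeomorphism $g_i : M_\nu \to N_i$ lying in the homotopy class determined by $\rho_i$, i.e. compatible with the marking $f_i$ under the canonical identification $\pi_1(M_\nu)\cong\g$. Then
\[ \Phi := g_1\circ g_0^{-1} : N_0 \longrightarrow N_1 \]
is an orientation-preserving bi-Lipschitz homeomorphism with $\Phi_* = \rho_1\circ\rho_0^{-1}$ and $\Phi\circ f_0\sim f_1$. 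Next I would upgrade $\Phi$ to an isometry by quasiconformal rigidity: lifting to $\tilde\Phi:\bH^3\to\bH^3$, the bi-Lipschitz (hence quasi-isometric) map $\tilde\Phi$ extends to an orientation-preserving quasiconformal homeomorphism $\partial\tilde\Phi$ of $\hat\bC=\partial\bH^3$ satisfying $\partial\tilde\Phi\circ\gamma = (\rho_1\rho_0^{-1})(\gamma)\circ\partial\tilde\Phi$ for all $\gamma\in\g_0$. Using the measurable Riemann mapping theorem and the equality of the geometrically finite end invariants of $N_0$ and $N_1$, I can modify $\Phi$ within its homotopy class (keeping it bi-Lipschitz away from those ends) so that $\partial\tilde\Phi$ is conformal on the domain of discontinuity $\Omega_{\g_0}$; its Beltrami coefficient is then a measurable $\g_0$-invariant Beltrami differential supported on the limit set $\Lambda_{\g_0}$. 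Since $\g_0$ is finitely generated, Sullivan's rigidity theorem (built on Ahlfors' finiteness theorem), which forbids a nonzero $\g_0$-invariant measurable Beltrami differential supported on $\Lambda_{\g_0}$, forces this differential to vanish a.e., so $\partial\tilde\Phi\in\G$ conjugates $\g_0$ onto $\g_1$. The induced orientation-preserving isometry $\phi:N_0\to N_1$ satisfies $\phi_* = \rho_1\rho_0^{-1}$, and tracking the homotopy classes through $g_0$ and $g_1$ gives $\phi\circ f_0\sim f_1$. (When both ends are geometrically infinite, $\Omega_{\g_0}=\emptyset$ and $\Lambda_{\g_0}=\hat\bC$, and the conclusion is immediate from Sullivan's theorem.)

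The main obstacle is precisely this last step: there is no elementary reason a bi-Lipschitz homeomorphism of hyperbolic $3$-manifolds should be homotopic to an isometry, and the passage genuinely needs the ergodic-theoretic input of Sullivan's rigidity theorem to kill an invariant line field on the limit set, together with the bookkeeping identifying the conformal structures on geometrically finite ends with the Teichm\"uller end invariants. In the body of the paper we will invoke only the existence of the bi-Lipschitz model map $g_i$ from the Ending Lamination Theorem, and will not reprove this rigidity statement.
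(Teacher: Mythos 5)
Your proof sketch is correct and follows essentially the same route the paper itself indicates: the paper quotes this theorem from Minsky and Brock--Canary--Minsky without proof, and in its subsection on the model map it summarizes the argument as (i) building a Lipschitz model map $M_\nu\to M_\rho$, (ii) promoting it to a bi-Lipschitz homeomorphism, and (iii) concluding by Sullivan rigidity, which is precisely your hard-direction strategy. Your easy direction (an orientation-preserving isometry compatible with the markings preserves both the geometrically finite Teichm\"uller parameters and the ending laminations) is the standard observation and needs no further comment.
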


In \S\ref{markings}-\ref{subhierarchies}, we describe curve complexes of subsurfaces, markings, and  finite hierarchies of tight geodesics  due to Masur-Minsky \cite{masur-minsky:complex2}.  We will also explain how to restrict to subpaths in a hierarchy which will retain their hierarchical structure as in \cite{durham:hierarchies} Appendix 8.  Since we use some of these properties in \S7, we are more careful with our language than in the rest of this section.
We discuss the construction of the model manifold $M_\nu$ from a hierarchy $H_\nu$  loosely, making precise statements when we will require them later.  There are two main features of these constructions that will interest us most.  We will make use of the existence of a class of embedded surfaces, called \emph{extended split level surfaces} whose geometry is essentially bounded.  They will play a crucial role in \S4.  We will also need the description of the model as a collection of blocks and tubes.  We return to hierarchical constructions and collect some counting results about hierarchies in \S7.  

\subsection{Curve complexes and markings}\label{markings}
For $Y\subset S$ a homotopically essential subsurface define $\xi(Y) = 3g+b$, where $g$ is the genus of $Y$ and $b$ is the number of its boundary components or punctures. In the case that $\xi(Y)\ge 3$, the vertices of the curve complex $\cC(Y)$ are non-trivial homotopy classes of simple closed curves that are not homotopic to a boundary component of $Y$.  Vertices $v_0, ..., v_k$ span a $k$-simplex if the curves they represent can be realized with pairwise minimal intersection number on $Y$.  If $\xi(Y) =4$, then $Y$ is a once punctured torus or a $4$ times punctured sphere, and the minimal intersection numbers are $1$ and $2$ respectively.  In both cases, $\cC(Y)$ is identified with the Farey graph.  The curve complex of a sphere with $3$ boundary components is empty, and the curve complexes associated to annular subsurfaces require more care to define. In order to do so, we fix a hyperbolic metric on $S$ for reference.  If $\gamma\subset S$ is an essential simple closed curve, we consider a collar neighborhood $Y_\gamma$ of $\gamma$ that embeds isometrically into the cover $\widetilde Y_\gamma \to S$ corresponding to $\langle \gamma \rangle \le \pi_1(S)$.  There is a natural compactification $\widehat Y_\gamma$ of $\widetilde Y_\gamma$ obtained as a quotient of the closed disk away from the fixed points of $\langle \gamma\rangle $ acting by deck transformations.  The vertices of the annular curve complex $\cC(Y_\gamma)$ are the simple geodesic arcs joining one boundary component of $\widehat Y_\gamma$ with the other.   Edges correspond to disjointness.  Suppose $\alpha, \gamma \in \cC(S)$ are vertices with geodesic representatives $\alpha^*$ and $\gamma^*$ and intersection $i(\alpha^*, \gamma^*)>0$.  We define a projection $\pi_\gamma(\alpha)\subset \cC(Y_\gamma)$ by taking the set of lifts of $\alpha^*$ that cross the core curve of $\widehat Y_\gamma$.  This collection has diameter $1$ where  for $Y\subset S$, we endow the $1$-skeleton of curve complexes $\cC(Y)$ with the path metric $d_{\cC(Y)}$, giving all edges length one.  

A \emph{marking} $\mu$ on  $S$ consists of a collection $\base(\mu) = \{\gamma_1, ..., \gamma_n\}$ of \emph{base curves} which forms a diameter (at most) one subset of $\cC(S)$, and to each base curve $\gamma_i$, a diameter (at most) one set of vertices $t_i\subset \cC(Y_{\gamma_i})\cup\{\emptyset\}$ in the curve complex of the annular surface $Y_{\gamma_{i}}$.  Such a $t_i$ is called a \emph{transversal}. We write $(\gamma_i,t_i)$ to denote a base curve with its associated transversal.  Say that $\mu$ is \emph{complete} if $\base(\mu)$ is maximal, \ie $\base(\mu)$ is a pants decomposition of $S$, and every curve has a (non-empty) transverse curve.  Call a marking \emph{clean} if each pair $(\gamma,t)$ is of the form $(\gamma, \pi_\gamma(\alpha))$ where $\alpha \in \cC(S)$, a regular neighborhood of $\gamma\cup \alpha$ fills a surface $F$ with $\xi(F) =4$ in which $d_{\cC(F)}(\gamma,\alpha)=1$, and $\alpha$ does not intersect any curve in $\base(\mu)$ other than $\gamma$.  If $\mu$ is a complete marking, there is a (uniformly finite) collection of clean markings which are \emph{compatible} with $\mu$.  We say $\mu$ is \emph{compatible} with a clean marking $\mu'$  if they share the same base curves, and for each pair $(\alpha,t)\in \mu$ and $(\alpha,t')\in \mu'$, the distance between $t$ and $t'$ is minimal in the curve complex of the annular subsurface $Y_\alpha$.   Thus, given a complete marking $\mu$, as long as we are willing to live with a bounded amount of ambiguity (which we usually are), we may choose a compatible complete clean marking $\mu'$ almost canonically.  We will also consider \emph{generalized markings}, which in this paper are actually just ending laminations $\lambda\in \cEL(S)$.  

\subsection{Hierarchies of tight geodesics}
Suppose $Y\subset S$ has $\xi(Y)>4$.  A (possibly bi-infinite) sequence of diameter 1 subsets $..., v_1, v_2, ... \subset \cC(Y)$ is called \emph{tight} if \begin{enumerate}
\item For each $0\le i <j \le n$, and vertices $v_i'\in v_i$ and $v_j'\in v_j$, $d_Y(v_i',v_j') = j-i$.
\item For each $0<i<n$, $v_i=\partial F(v_{i-1},v_{i+1})$ where $F(v_{i-1},v_{i+1})\subset Y$ is the surface filled by $v_{i-1}\cup v_{i+1}$.  
\end{enumerate}
A sequence in a subsurface $Y$ where $\xi(Y)=2$ or $4$ is tight if it is the vertex sequence of a geodesic in $\cC(Y)$.  
%If $\xi(Y)=4$, the edges of $\cC(S)$ are determined not by disjointness, but by minimal intersection number.  Thus if $Y$ is a one-holed torus, there is an edge joining $v$ and $w$ if $i(v,w)=1$ and if $Y$ is a four-holed sphere, then there is an edge joining $v$ and $w$ if $i(v,w)=2$.  
A finite \emph{tight geodesic} $g$ in $\cC(Y)$ is the data of a tight sequence $v_0, ..., v_n$ and a pair of markings $\mathbf{I}(g)$ and $\mathbf{T}(g)$ called the initial and terminal markings respectively with the condition that $v_0\subset \base(\mathbf{I}(g))$ and $v_n\subset \base(\mathbf{T}(g))$.  The \emph{domain} or \emph{support} $D(g)$ of $g$ is the surface $Y$.  
If $g$ is a tight geodesic supported on $Y$ then $X\subset Y$ is a \emph{component domain} for $g$ if there is a simplex $v$ of $g$ such that $X$ is a component of $Y\setminus v$ or $X$ is an annular subsurface with core curve a vertex of $v$.  

%If $h$ is a tight geodesic such that $D(h)= X$, then $X$  is a component domain for 
Roughly, a \emph{hierarchy of tight geodesics $H$ on $S$} is a collection of tight geodesics supported on subsurfaces of $S$ satisfying a collection of compatibility and completeness conditions.  First there must be a \emph{main geodesic} $g_H\in H$ such that $D(g_H)=S$; the initial and terminal markings for $H$, denoted $\mathbf{I}(H)$ and $\mathbf{T}(H)$, are $\mathbf{I}(g_H)$ and $\mathbf{T}(g_H)$ respectively.  If $Y$ is a component domain for geodesics $g,h\in H$ and $Y$ contains a vertex in a simplex of $h$ and of $g$,  then there is a geodesic $k\in H$ such that $D(k)=Y$.  Moreover, the initial and terminal markings for $k$ must contain curves in a simplex of $g$ and $h$.  Finally, for each $k\in H\setminus\{g_H\}$, $k$ has support which is a component domain for some $h, g \in H$ and all of their markings must be compatible (see \cite{masur-minsky:complex2}, \S4).  

If $H$ is a hierarchy, and $\mathbf{I}(H)$ and $\mathbf{T}(H)$ are both complete markings, then we say that $H$ is \emph{complete}.    Complete hierarchies exist between any two complete markings.

Let $H$ be a complete hierarchy.  A \emph{slice} in $H$ is a set $\tau$ of pairs $(h,v)$, where $h\in H$ and $v$ is a simplex of $h$ satisfying the following conditions:
\begin{enumerate} 
\item A geodesic $h$ appears in at most one pair in $\tau$.
\item There is a vertex $v_\tau$ of $g_H$ such that $ (g_H, v_\tau)\in \tau$.
\item For every $(k,w)\in \tau\setminus\{(g_H,v_\tau)\}$, $D(k)$ is a component domain of $(D(h),v)$ for some $(h,v)\in \tau$.
\end{enumerate}
If a slice $\tau$ satisfies the following condition, we say that $\tau$ is \emph{complete}: Given $(h,v)\in \tau$, for every component domain $Y$ of $(D(h),v)$ with $\xi(Y)\not=3$ there is a pair $(k,w)\in \tau$ with $D(k)=Y$.  Let $V(H)$ be the collection of complete slices of $H$.  

For each $\tau\in V(H)$, the curves in the collection $\{v : (h,v)\in \tau \text{ and } \xi(D(h))\not=2\}$ are disjoint, and form a pants decomposition $\cP_\tau$ of $S$.  By completeness, for each $\alpha\in \cP_\tau$, there is a pair $(h_\alpha, v_\alpha)\in \tau$ such that the support of $h_\alpha$ is the annular subsurface with core curve $\alpha$.  Thus, each complete slice uniquely defines a complete marking $\mu_\tau$ with $\base(\mu_\tau)=\cP_\tau$.   The set $V(H)$ admits a strict partial order $\prec$ with minimal and maximal elements (the slice corresponding to) $\mathbf{I}(H)$ and $\mathbf{T}(H)$ respectively.  A \emph{resolution} $\{\tau_i\}^N_{i=0}$ is a maximal chain with respect to this order.  That is, \[\mathbf{I}(H) = \tau_0 \prec \tau_1 \prec ... \prec \tau_N=\mathbf{T}(H).\]
Informally, a resolution is a way to sweep through all of the curves `in order.'  The relation $\tau_i\prec\tau_{i+1}$ represents definite, forward progress along $H$, and every curve in the hierarchy is in some slice in any resolution.  Let $J = [0,N]\cap \bZ$.  We will associate to the resolution the sequence of markings $\{\mu_{\tau_i}\}_{i \in J }$ and we will call the pants decompositions $\cP_i = \base(\mu_{\tau_i})$.  The sequence $\{\cP_i\}_{i\in J}$ forms a path in the pants graph $\mathbf{P}(S)$ with repetitions.   For details, see  \S5 of \cite{masur-minsky:complex2}.

\subsection{Subhierarchies}\label{subhierarchies}
We now explain how to truncate a hierarchy $H$ to recover a `subhierarchy.'  We direct the reader to \cite{durham:hierarchies} Appendix 8 for more details of this construction.  Fix a resolution of $H$ by slices and recover markings $\{\mu_i'\}_{i\in J}$, and find complete, clean markings $\mu_i$ compatible with $\mu_i'$ for all $i$.  A hierarchy of tight geodesics coarsely defines a \emph{hierarchy path} $P_H$ in the marking complex $\cM(S)$  of $S$ whose vertices are complete, clean markings on $S$; there is an edge between two markings if they are related by an \emph{elementary move}.  The elementary moves are \begin{enumerate}
\item A \emph{twist} move: replace some pair $(\gamma,t)\in \mu$ with $(\gamma, T^\pm_\gamma(t))$, where $T^\pm_\gamma$ is either a right or left hand Dehn twist about $\gamma$. 
\item A \emph{flip} move:  replace the pair $(\gamma, t)$ with $(t,\gamma)$; the marking may no longer be clean, so one must then find a clean, compatible marking.
\end{enumerate}
 We direct the reader to (\cite{masur-minsky:complex2}, \S5) for more about the marking complex and the paths $P_H: J \to \cM(S)$ connecting the complete, clean markings compatible with $\mathbf{I}(H)$ and $\mathbf{T}(H)$.  
 
 Assume $i<j$ are in the interval $J$, and call $\mu_i = P_H(i)$ and $\mu_j = P_H(j)$.   We will now outline a procedure to build a hierarchy $H_{i,j}$ such that $\mathbf{I}(H_{i,j})= \mu_i$,  $\mathbf{T}(H_{i,j}) = \mu_j$ and $H_{i,j}$ carries a resolution that is just restriction of the original resolution of $H$.  
  
Suppose $Y$ is the support of a geodesic $h\in H$, with first simplex $v_0$ and last simplex $v_n$.  The \emph{active segment} for $Y$ is the interval $J_Y =[i_Y, t_Y]\cap \bZ\subset J$ where
\[\text{$i_Y= \min\{i' : P_H(i')$ contains $v_0\}$}\]
 and \[\text{$t_Y= \max\{i': P_H(i')$ contains $v_n\}$}.\]
 Let $D$ be the collection of subsurfaces of $S$ whose active intervals have non-empty intersection with $[i,j]$.  That is,
\[D = \{Y\subset S: \text{there is an $h\in H$ such that $D(h)=Y$ and $ J_Y\cap [i,j]\not=\emptyset$}\};\] 
$D$ is the collection of subsurfaces which will participate in the new hierarchy $H_{i,j}$. Suppose $D(h)\in D$, and write the tight sequence $h = v_0, ..., v_n$.  Let $f(h) = \min\{k : v_k \subset \mu_i \}$ if that set is non-empty, and $0$ otherwise.  Similarly, let $\ell(h) = \max\{k: v_k\subset \mu_j\}$ if that set is non-empty and $n$ otherwise.  Define the \emph{truncated tight sequence for $h'$ for $Y$} is the tight sequence  $v_{f(h)}, v_{f(h)+1}, ..., v_{\ell(h)}$.  Durham defines initial and terminal markings on each truncated tight sequence $h'$ by induction stipulating that the main geodesic has initial and terminal markings $\mu_i$ and $\mu_j$.  Lemma 8.3.1 of \cite{durham:hierarchies} states that the collection $H_{i,j} = \{h' : h\in D\}$ of tight geodesics  is a hierarchy.  Lemma 8.3.2 states that $\{\mu_k': k\in [i,j]\cap \bZ\}$ is a resolution of $H_{i,j}$; \ie $\{\mu_k'\}$ restricts to a resolution on $H_{i,j}$.  We will use these facts in \S7.

\subsection{From end invariants to hierarchies}\label{invnts}

Given a marked Kleinian surface group $\rho: \pi_1(S) \to\G$ without parabolics, we first recover its end invariants $\nu(\rho) = (\nu^-, \nu^+)$.  If either end invariant is geometrically finite, then we replace the Teichm\"uller parameter $X$ with a marking $\mu_X$ such that $\cP_X = \base(\mu_X)$ is a pants decomposition in which every curve has length at most $L_0$---the Bers constant for $S$.  The transversal $t$ for $\gamma\in \cP_X$ is chosen to be of the form $t= \pi_\gamma(\alpha)$, where $\ell_X(\alpha^*)$ is minimal among curves that have essential intersection with $\gamma$.  If an end invariant is infinite, it is an ending lamination, and so it is a generalized marking, and we will be most interested in the case where at least one of the end invariants of $M_\rho$ is an ending lamination of $S$.  Masur-Minsky \cite{masur-minsky:complex1} proved that $\cC(S)$ is Gromov hyperbolic.  Its Gromov boundary is therefore well defined, and Klarreich \cite{klarreich:boundary} proved that there is a natural identification of $\partial \cC(S)$ and $\cEL(S)$.  So a pair of end invariants gives a coarsely defined pair in $\cC(S)\cup \partial\cC(S)$.  In  \cite{masur-minsky:complex2}, Masur-Minsky proved that finite hierarchies have certain local finiteness properties.  Minsky uses these finiteness properties to extract infinite limits of finite hierarchies whose initial and terminal markings are generalized markings. That is, there is a hierarchy of tight geodesics $H_\nu$ joining the data $(\nu^-,\nu^+)$.     

To the hierarchy $H_\nu$, there corresponds a \emph{model manifold} $M_\nu$ built from a union of \emph{blocks} and \emph{tubes}.  Each block is either an internal block or a boundary block.  There are exactly two isometry types of internal blocks, while the boundary blocks are more complicated; we will not need them.

\subsection{The model metric}
 We describe the metric endowed to each internal block and the instructions for gluing blocks together. Let $\epsilon(i)$ be the Margulis constant for $i$-dimensional hyperbolic space.  Fix a number $\epsilon_0<\epsilon(3)$, and find a number $\epsilon_1<1$, depending on $\epsilon_0$ such that if ${F : S\to M_\rho}$ is an incompressible pleated surface, the $\epsilon_0$ thick part of $S$ maps into the $\epsilon_1$ thick part of $M_\rho$ (see \cite{minsky:ELTI}, \S3.2.2).   Let  $A_{\gamma} =  \bH^2/ \langle z \to e^{\frac{\epsilon_1}{2}}z\rangle$ and $\gamma$ be the geodesic representative of the core curve of $A_\gamma$.  Let $\collar(\gamma)$ be the collar neighborhood of $\gamma$ whose boundary components have length $\epsilon_1$.  Call an annulus \emph{standard} if it is isometric to $\cl(\collar(\gamma))$.  Endow a pair of pants $Y$ with the unique hyperbolic metric that makes each of its boundary components length $\epsilon_1/2$.  Double $Y$ along its boundary, and remove the interior of the three standard annuli.  If a pair of pants $Y'$ is isometric to either of these components, call it \emph{standard}.  Let $W_1$ be a four holed sphere and $W_2$ be a one holed torus. Let $\gamma_i^+, \gamma_i^-$ be adjacent in the curve graph of $W_i$ so that their intersection number is minimal among homotopically distinct, nontrivial curves, and let $T_i^\pm\subset W_i$ be collar neighborhoods of $\gamma_i^\pm$.  We build blocks \[B_i = (W_i \times [-1,1])\setminus (T_i^-\times [-1,-1/2) \cup T_i^+ \times (1/2,1]).\]  The boundaries of these blocks decompose as a union of annuli and pairs of pants. Call $(W_i\times \{\pm1\}\setminus T^\pm)\subset \partial B_i$ the \emph{gluing boundary}.  It consists of a total of $4$ pairs of pants.   Endow $B_i$ with fixed metrics $\sigma_i$ such that 
 \begin{enumerate}[(i)]
 \item $\sigma_i$ restricts to a standard metric on every pair of pants $Y$ in the gluing boundary of $B_i$
  \item $\sigma_i$ restricts to a euclidian annulus $S^1\times [0,\epsilon_1]$ on every component $\partial W_i\times [-1,1]$ and the length of a level circle is $\epsilon_1$.  
  \item $\sigma_i$ restricts to a euclidian annulus $S^1\times [ 0, \epsilon_1/2]$ on the boundary components $\partial(T_i^+\times [1/2,1])\cap B_i$ and $\partial (T_i^-\times [-1,-1/2])\cap B_i$ where the length of a level circle is $\epsilon_1$. 
  \item The product structures on annuli in items (ii) and (iii) agree with the product structure induced by the inclusion in $W_i\times [-1,1]$. 
  \end{enumerate}
See \cite{minsky:ELTI},  \S8.1 for more on the construction of blocks and \S8.3 for more on the metrics $\sigma_i$.
\subsection{Building the model}
Fix a resolution of $H$ by slices and recover markings $\{\mu_j\}_{j \in J}$.   If $k\in J$ is such that ${d_{\mathbf{P}(S)}(\base(\mu_k),\base(\mu_{k+1}))=1}$, then there corresponds a block $B(k)$ whose gluing boundaries are components of $S\setminus \base(\mu_k)$ and $S\setminus \base(\mu_{k+1})$.  In fact, for each edge $e$ in a geodesic $h\in H$ such that $\xi(D(h))=4$, there there is a corresponding block $B(e)$ in the model. Distinct blocks $B$ and $B'$ are identified along pairs of pants $Y$ in the gluing boundary if they represent the same subsurface in $S$. Suppose $Y$ is a pair of pants which is a component of  $S\setminus \base(\mu)$ where $\mu$ is a slice of our resolution.  Then $Y$ is in the gluing boundary of exactly 2 blocks, and an inductive argument using the resolution of $H_\nu$ yields an embedding of the union of blocks $M_\nu[0]= \cup_\partial B(e)$ into $S\times \bR$.  A pair of pants $Y$ in the gluing boundary of a block is mapped to a level surface $Y\times \{t_Y\}$.  Given the base curves $\cP$ for a slice of our resolution, a \emph{split level surface} $F_\cP$ is a union over pairs of pants \[F_\cP = \bigcup_{Y\subset S\setminus \cP} Y\times \{t_Y\}\]
equipped with its model metric.  That is, each component is a standard pair of pants.  

The complement of the thick part of the model $M_\nu[0]$ in $S\times \bR$ is a collection of solid tori called \emph{tubes}.  Each tube is of the form $U(v)=\collar(v)\times I$, where $v$ is a vertex of the hierarchy $H_\nu$; in the path metric on $M_\nu[0]$, $\partial U(v)$ is isometric to a euclidean torus.  The euclidean structure on  $\partial U(v)$ extends uniquely to a hyperbolic metric on $U(v)$, and we endow $U(v)$ with this metric.  
Call the collection of metrized tubes $\cU$, and let $M_\nu = M_\nu[0]\sqcup \cU$.  Let $F_\cP$ be a split level surface.  If $F_\cP$ meets $\partial U(v)$ for some $v$, then the intersection is a pair of longitudinal geodesics in $\partial U(v)$ of length $\epsilon_1$ by construction.  Each curve in the pair is isotopic to the geodesic closed curve $v^*\subset U(v)$; parameterize $v^*$ proportionally to arclength as well as each of these longitudes.  We construct a geodesic ruled annulus  for each of the two longitudes and glue these annuli along $v^*$.  Let $S_\cP$ be the union, identified along their common boundary, of $F_\cP$ with the geodesic ruled annuli joining the boundary of $Y$ with $v^*$ for each $v\in \cP$.  Call $S_\cP$ an \emph{extended split level surface};  $S_\cP$ is identified with $S$ by inclusion and is isotopic in $M_\nu$ to $S\times \{0\}$.  Inclusion $S\hookrightarrow S_\cP$ induces a model metric $\sigma_m$ on $S$.  The collection of extended split level surfaces are \emph{monotonically arranged} in the sense that the elementary move $\cP_{k}\mapsto \cP_{k+1}$ corresponds to an isotopy $S_{\cP_{k}}$ to $S_{\cP_{k+1}}$ in the positive direction along $\bR$.  For details of the constructions in this paragraph, see \cite{minsky:ELTI} \S5 and \S8, as well as \cite{brock-canary-minsky:ELTII}, \S4.

\subsection{The model map}
Minsky builds a Lipschitz map $M_\nu\to M_\rho$, and Brock--Canary--Minsky promote this map into a bi-Lipschitz homeomorphism.  An application of  Sullivan Rigidity completes the proof of the Ending Lamination Theorem in the case of surface groups.   We quote here the properties of the bi-Lipschitz mapping that we use.
\begin{thm}[Bi-Lipschitz Model Theorem \cite{brock-canary-minsky:ELTII}, \cite{minsky:ELTI}]
There exists $K$, depending only on $S$, such that for any discrete, faithful Kleinian surface group $\rho: \pi_1(S) \to \G$ end invariants $\nu = (\nu_+, \nu_-)$, there is an orientation-preserving $K$-bi-Lipschitz homeomorphism
\[\Phi : M_\nu \to M_\rho\]
in the homotopy class determined by $\rho$.
\end{thm}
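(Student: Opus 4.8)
The statement is the Bi-Lipschitz Model Theorem of Minsky and Brock--Canary--Minsky, and a full proof occupies \cite{minsky:ELTI} and \cite{brock-canary-minsky:ELTII}; here I outline the architecture one would follow. The plan is to first produce a Lipschitz map $M_\nu[0]\to M_\rho$ defined on the union of blocks, then extend it across the Margulis tubes $\cU$ to get a Lipschitz map $M_\nu\to M_\rho$, and finally upgrade this to a bi-Lipschitz homeomorphism. The organizing principle is that every geometric feature of $M_\nu$ that one can read off combinatorially --- lengths of core curves of tubes, widths of standard collars, and the length of a resolution between two slices --- is forced to match, up to multiplicative and additive error depending only on $S$, the corresponding feature of $M_\rho$. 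This is the content of Minsky's a priori length bounds, which are themselves derived from the existence of the Lipschitz model map together with the hyperbolicity of $\cC(S)$ and the projection estimates of \cite{masur-minsky:complex2}.

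For the first step I would realize the vertices of the hierarchy $H_\nu$ by pleated (or simplicial hyperbolic) surfaces in $M_\rho$: each slice $\tau$ of a fixed resolution has an associated pants decomposition $\cP_\tau$, and since consecutive slices differ by an elementary move, the Bounded Diameter Lemma shows the corresponding surfaces move a bounded amount. Sending each extended split level surface $S_{\cP_\tau}$ to such a pleated surface, and interpolating across the product regions of the internal blocks --- whose metrics $\sigma_i$ are fixed once and for all and have bounded geometry --- produces a map on $M_\nu[0]$ that is Lipschitz with constant depending only on $S$, because each block maps into a region of bounded diameter. The substance here is the ``bounded geometry of blocks'' and ``no backtracking'' estimates of \cite{minsky:ELTI}.

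The second step extends the map over each tube $U(v)=\collar(v)\times I$. Here $\partial U(v)$ already carries a Euclidean metric from the path metric on $M_\nu[0]$, and the Lipschitz map sends $\partial U(v)$ uniformly close to the boundary of the corresponding Margulis tube in $M_\rho$; the meridian/longitude data --- equivalently, the complex translation length of the geodesic $v^*$ --- agree on the two sides up to bounded error by the a priori bounds, so one fills in a Lipschitz map of the solid tori with controlled constant. The delicate point, and in my estimation the main obstacle, is the final upgrade to a \emph{bi-Lipschitz homeomorphism}: a proper Lipschitz map of bounded degree need not be bi-Lipschitz. The strategy of \cite{brock-canary-minsky:ELTII} is to verify that the model map is proper of degree one, is uniformly locally injective --- using that the extended split level surfaces are embedded with controlled geometry and are \emph{monotonically arranged}, so that they foliate $M_\nu$ in a manner compatible with a corresponding exhaustion of $M_\rho$ --- and then to promote ``proper $+$ degree one $+$ uniformly locally injective'' to a global bi-Lipschitz homeomorphism, handling the thick part and the tubes separately and checking that the bi-Lipschitz constants match along $\partial U(v)$. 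The orientation-preserving and homotopy-class assertions are then automatic, since the map was built in the homotopy class determined by $\rho$ from the outset.
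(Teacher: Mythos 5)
The paper does not give a proof of the Bi-Lipschitz Model Theorem: it is stated as a quoted theorem, attributed to \cite{minsky:ELTI} and \cite{brock-canary-minsky:ELTII}, and used as a black box for the rest of the argument (in Proposition~\ref{Split}, Lemma~\ref{hmtpy}, and Proposition~\ref{finelower}). So there is no ``paper's own proof'' to compare against. Your outline is a faithful high-level summary of the actual strategy in the cited works --- Lipschitz map on the union of blocks via pleated/split-level surfaces and bounded block geometry, extension across tubes using the a priori bounds on complex translation lengths, and the upgrade to a bi-Lipschitz homeomorphism via degree-one properness plus uniform local injectivity exploiting the monotone arrangement of split level surfaces. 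Two small cautions: the promotion of a degree-one, proper, uniformly locally injective Lipschitz map to a global bi-Lipschitz homeomorphism is far from automatic and is where most of \cite{brock-canary-minsky:ELTII} is spent (topological order on surfaces, insulating regions, and an inductive scale argument); and the a priori length bounds are logically upstream of the Lipschitz model map (they are proved first, using the hierarchy and subsurface projections, and then feed into the construction), so the dependency runs the opposite way from how your first paragraph suggests.
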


\subsection{Properties of extended split level surfaces}

We collect some facts about extended split level surfaces which follow directly from the construction of the model manifold and the Bi-Lipschitz Model Theorem.
\begin{prop}[Minsky \cite{minsky:ELTI}, Brock-Canary-Minsky \cite{brock-canary-minsky:ELTII}]\label{Split}
Let $\rho: \pi_1(S)\to \G$ be discrete and faithful with end invariants $\nu$.  Let $\cP$ be the base curves of a slice of some fixed resolution of $H_\nu$ and $\Phi:M_\nu\to M_\rho$ be a $K$-bi-Lipschitz model map.  Then there is an extended split level surface $S_\cP\subset M_\nu$ and an embedding $G: S\to M_\rho$ defined by the composition \[G:S\to S_\cP  \xrightarrow{\Phi}  M_\rho\] in the homotopy class determined by $\rho$.  Moreover, there are constants $A$, $L'>0$ depending only on $S$ such that
\begin{enumerate}[(i)]
\item $G$ induces a metric  $\sigma_G$ on $S$ such that $\area(S, \sigma_G)<A$.
\item For every $\alpha \in \cP$, the $\sigma_G$-length of $\alpha^*$ is no more than $L'$.
\item If $\cP'$ is the collection of base curves of a different slice and $\cP\cap \cP'=\emptyset$, then there is a map $G':S\to M_\rho$ also satisfying properties (i) and (ii) above, and $\im G\cup \im G'$ bounds an embedded submanifold $W$ in $M_\rho$ homeomorphic to $S\times I$.
\end{enumerate} 
\end{prop}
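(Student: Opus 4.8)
The plan is to take $S_\cP\subset M_\nu$ to be the extended split level surface built from the given slice, exactly as in the construction recalled above, and to let $G$ be the composite $S\xrightarrow{\ \iota\ }S_\cP\xrightarrow{\ \Phi\ }M_\rho$, where $\iota$ is the identification of $S$ with $S_\cP$ by inclusion. Then $G$ is an embedding, since $\iota$ is an embedding and $\Phi$ is a homeomorphism, and it lies in the homotopy class determined by $\rho$, since $S_\cP$ is isotopic in $M_\nu$ to $S\times\{0\}$ and $\Phi$ lies in that class. Write $\sigma_m$ for the model metric that $\iota$ induces on $S$, and $\sigma_G=G^*g$ for the pullback under $G$ of the hyperbolic metric $g$ of $M_\rho$. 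Since $\Phi$ is $K$-bi-Lipschitz with $K=K(S)$, one has $K^{-2}\sigma_m\le\sigma_G\le K^2\sigma_m$ as quadratic forms on $S$; hence it suffices to bound $\area(S,\sigma_m)$ and the $\sigma_m$-lengths of the curves of $\cP$ by quantities depending on $S$ alone, folding the resulting powers of $K$ into $A$ and $L'$.

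For (i) and (ii) the point is that $S_\cP$ is assembled from pieces of a fixed, ``standard'' model geometry: by construction $S_\cP=F_\cP\cup\bigcup_{v\in\cP}R_v$, where $F_\cP$ is a disjoint union of $|\chi(S)|$ standard pairs of pants --- each with its fixed metric, hence of a fixed area --- and each $R_v$ is a geodesically ruled annulus inside the tube $U(v)$ whose boundary is the core geodesic $v^*$ together with a longitude of $\partial U(v)$ of length $\epsilon_1$. A geodesically ruled surface has Gaussian curvature at most $-1$, and $v^*$, being homotopic in $U(v)$ to the length-$\epsilon_1$ longitude, has model length at most $\epsilon_1$; Gauss--Bonnet applied to the annulus $R_v$, together with these two facts, bounds the model area of $R_v$ by a universal constant. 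Since $|\cP|=\tfrac32|\chi(S)|$, summing over the $|\chi(S)|$ pants and the $\tfrac32|\chi(S)|$ ruled annuli bounds $\area(S_\cP,\sigma_m)$ by a function of $|\chi(S)|$, giving (i). For (ii), every $\alpha\in\cP$ is freely homotopic within $S_\cP$ to the core geodesic $v^*$ of $U(\alpha)$, of model length at most $\epsilon_1$; pushing $v^*$ forward by $\Phi$ exhibits a representative of $\alpha$ of $\sigma_G$-length at most $K\epsilon_1$, so the $\sigma_G$-geodesic representative $\alpha^*$ has length at most $L':=K\epsilon_1$.

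For (iii), write $\cP=\cP_j$ and $\cP'=\cP_k$ for slices of the fixed resolution, say with $j<k$, and set $G'=\Phi\circ\iota'$ for the identification $\iota'$ of $S$ with $S_{\cP_k}$. The extended split level surfaces of a resolution are monotonically arranged along the product structure $M_\nu\cong S\times\bR$, and --- this is the one step that genuinely uses the internal combinatorics of the model rather than formal manipulation --- when $\cP_j\cap\cP_k=\emptyset$ the surfaces $S_{\cP_j}$ and $S_{\cP_k}$ are in fact disjoint, and the blocks of the resolution lying strictly between them fill an embedded submanifold $W_\nu\subset M_\nu$ with $\partial W_\nu=S_{\cP_j}\cup S_{\cP_k}$ and $W_\nu\cong S\times I$ (this is part of the content of \cite{minsky:ELTI}, \S5 and \S8, and \cite{brock-canary-minsky:ELTII}, \S4). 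Then $G'$ satisfies (i) and (ii), and lies in the homotopy class of $\rho$, by the argument of the previous paragraph, while $W:=\Phi(W_\nu)$ is an embedded copy of $S\times I$ in $M_\rho$ with $\partial W=\im G\cup\im G'$.

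I expect the main obstacle, were one to prove (iii) from scratch rather than quote it, to be exactly this disjointness-and-product statement, which requires unwinding how the blocks interpolating between two slices of a resolution are glued. Everything else reduces to transporting fixed model-geometry estimates through the $K$-bi-Lipschitz homeomorphism $\Phi$; in particular the area and length bounds in (i) and (ii) are immediate once $S_\cP$ is described as a union of standard pairs of pants and ruled annuli.
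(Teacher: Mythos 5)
Your proposal follows the same route as the paper: take $S_\cP$ to be the extended split level surface, set $G=\Phi\circ\iota$, transport model-metric bounds through the $K$-bi-Lipschitz map $\Phi$, and for (iii) use the monotone arrangement of extended split level surfaces to isotope $S_\cP$ to $S_{\cP'}$ inside $M_\nu$ and push forward by $\Phi$.

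The one place you deviate is the area bound for the ruled annuli inside the tubes, and there your argument doesn't quite close. Gauss--Bonnet for an annulus gives $\area(R_v)\le\int_{\partial R_v}\kappa_g\,ds$ (the curvature hypothesis $K\le -1$ only controls the sign, and $\chi=0$ kills the Euler term); the geodesic boundary $v^*$ contributes zero, but you are left needing a bound on the geodesic curvature of the longitude components of $\partial R_v$ in the ruled metric, which is not what the length bound $\ell(v^*)\le\epsilon_1$ supplies. The paper instead invokes Thurston's area estimate for geodesic ruled surfaces --- the area of a geodesic ruled annulus is at most the sum of the lengths of its two boundary curves --- which gives $\le 2\epsilon_1$ per ruled annulus (so $\le 4\epsilon_1$ per tube after gluing the pair along $v^*$) with no geodesic-curvature input. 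You should replace the Gauss--Bonnet step with that bound, or supply the missing $\kappa_g$ estimate. Everything else, including the way you describe (iii) as the image of the region between the two split level surfaces, agrees with the paper's proof.
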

\begin{proof}
Since $\cP$ is the collection of base curves of a slice, there is an extended split level surface $S_\cP\subset M_\nu$.  Fixing a smooth structure on $S$, $S\to S_\cP$ is a piecewise smooth embedding and $\Phi$ is a $K$-bi-Lipschitz homeomorphism, so $G$ is an embedding.  By construction of $S_\cP$, the length of any boundary component of the standard pair of pants is $\epsilon_1$.  Indeed, the $\sigma_m$-length of the geodesic representative of any pants curve $\alpha\in \cP$ is bounded above by $\epsilon_1$, and so the $\sigma_G$-length of $\alpha^*$ is at most $K\epsilon_1 =L'$.  The $\sigma_m$-area of the annulus connecting boundary components of the standard pants is at most $4\epsilon_1$, because it is the union of two hyperbolic ruled annuli, which have area at most the sum of lengths of their boundary curves \cite{thurston:notes}.  Thus $S_\cP$ has area bounded above by $4(3g-3)\epsilon_1+ (2g-2)2\pi=A_0$ in the model metric, and so $\area(S, \sigma_G)\le K^2A_0=A$.  

Construct $G': S\to S_{\cP'} \to M_\rho$ analogously.  Each extended split level surface is isotopic to $S\times \{0\}$, and by monotonicity of the arrangement of extended split level surfaces, if $\cP$ and $\cP'$ share no common curves, then $S_\cP'\subset (S\times \bR )\setminus S_\cP$.  The image of an isotopy joining these surfaces defines an embedded submanifold $W'\subset M_\nu$.  Then $W=\Phi(W')\subset M_\rho$ is embedded and bound by $\im G\cup \im G'$.
\end{proof}

\section{Bounding volume from below}\label{below}

Let $\rho: \pi_1(S)\to \G$ be discrete and faithful without parabolics.  The goal of this section will be to show that a pair of simplicial hyperbolic surfaces that are far apart in $M_\rho$ are close to a pair of embedded surfaces which bound an embedded submanifold of large volume.  The main difficultly is to obtain uniform estimates for volume of the homotopy between our simplicial hyperbolic surfaces and our embedded surfaces, even as the diameter of our simplicial hyperbolic surfaces tends to infinity---a feature of manifolds with unbounded geometry.  This is the main technical point of this paper.  Here we adapt some of the ideas of Brock in \cite{brock:wp} by decomposing a homotopy space $S\times I$ into a union of solid tori.  We use a kind of isoperimetric inequality for simplicial $1$, $2$, and $3$ chains to bound the volume of our homotopy between an embedded surface and our simplicial hyperbolic surface.

Given a piecewise differentiable 3-chain $C\in \Cb_3(M_\rho)$, the function $\deg_C : M_\rho \to \bZ$ which measures the degree of $C$ in $M_\rho$ is well defined almost everywhere.  Moreover, if $P$ is a smooth 3-manifold, given a piecewise differentiable map $H: P \to M_\rho$, define the $H$-mass of $C\in \Cb_3(P)$ by \[\mass_H(C) = \int_{M_\rho}|\deg_{H_*C}|  \omega.\]
Notice that \[\left|\int_C H^*\omega\right|\le \mass_H(C).\]  For $Z\in \Cb_2(P)$ define \[\mass_H(Z)= \int_Z |\deg_{H_*Z}|d\area,\] where $\deg_{H_*Z}$ is a map from the space of 2-planes in the tangent bundle over each point in $M_\rho$ to $\bZ$, and $d\area$ is the Riemannian area form pulled back to each of the $2$-simplices of  $Z$.  We use the fact, due to Thurston (\cite{thurston:vol}, \S4) that if $\pi_1(P)$ is abelian and $\partial C=Z$, then \begin{equation}\label{mass}\mass_H(C)\le \mass_H(Z).
  \end{equation}  

Recall that if $(F,\cT)$ is a simplicial hyperbolic surface, then the metric $\sigma_F$ on $S$ induced by $F$ is one of constant negative curvature equal to $-1$ except perhaps at the vertices, where it has ``concentrated negative curvature.''  The metric $\sigma_F$ determines a conformal structure on $S$ and in this conformal class, there exists a unique hyperbolic metric $\sigma_F^{\hyp}$.  By a lemma of Ahlfors \cite{Ahlfors:lemma}, the identity mapping $(S,\sigma_F^{\hyp}) \to (S, \sigma_F)$ is 1-Lipschitz.  If $\sigma$ is a metric on $S$, and $\alpha$ is an arc or closed curve let $(\alpha^*,\sigma)\subset S$ be a $\sigma$-geodesic representative  of such a closed curve or arc, rel boundary.

\begin{lemma}\label{hmtpy} Let $\cP$ be the base curves of a slice of a fixed resolution of $H_\nu$, and $(F, \cT)$ be a simplicial hyperbolic surface adapted to $\cP$ in the homotopy class determined by $\rho$.  Then there is an extended split level surface $S_\cP\subset M_\nu$ such that the embedding $G: S\to S_\cP \to M_\rho$ is homotopic to $F$ by a homotopy $H$ and a triangulation $C$ of $S\times I$ such that \begin{equation}\label{expr}
\left|\int_{S\times I} H^*\omega\right| \le \mass_H(C)<M\end{equation} where $M$ depends only on $S$.  Moreover, $\partial H_*C = F_*\cT - G_*\cT$.  
\end{lemma}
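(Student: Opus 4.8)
The plan is to connect $F$ and $G$ through a chain of simplicial hyperbolic surfaces adapted to a sequence of pants decompositions that interpolates between $\cP$ and the pants decomposition read off from the extended split level surface $S_\cP$, and to bound the volume swept out at each interpolation step by a uniform constant. First, I would produce the extended split level surface $S_\cP\subset M_\nu$ and its associated embedding $G:S\to M_\rho$ using Proposition \ref{Split}; note that $G$ carries the pants curves $\cP$ to curves of bounded $\sigma_G$-length and the surface has bounded area. Since $G$ is (isotopic to) a standard extended split level surface, one can choose a triangulation $\cT_G$ adapted to $\cP$ for which $G$ is, after a bounded homotopy supported near the pants curves, simplicial hyperbolic (or at least has a controlled simplicial hyperbolic representative): this uses that the model metric near each pants curve and each ruled annulus is uniformly bounded, so the cone points and edge geodesics can be straightened at bounded cost. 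Both $F$ and this model-side surface are simplicial hyperbolic surfaces adapted to the \emph{same} pants decomposition $\cP$ and in the same homotopy class, so by the usual interpolation argument for simplicial hyperbolic surfaces sharing a triangulation (varying the vertices and the totally geodesic faces), they cobound a homotopy whose image lies in a set of bounded diameter around either surface; triangulating $S\times I$ compatibly with $\cT=\cT_\cP$ on both ends gives the $3$-chain $C$ with $\partial H_*C = F_*\cT - G_*\cT$.

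The heart of the estimate is the inequality $\mass_H(C)<M$. Here I would decompose the homotopy $S\times I$ into a union of pieces, each of which is either (a) a solid-torus neighborhood $\collar(\alpha)\times I$ of one of the pants curves $\alpha\in\cP$ together with its ruled annulus, or (b) a ``pair of pants times $I$'' piece lying over a standard pair of pants of $S_\cP$. On each solid torus piece $P$ with $\pi_1(P)$ abelian, I invoke Thurston's inequality \eqref{mass}: $\mass_H(C|_P)\le \mass_H(\partial C|_P)$, so it suffices to bound the $H$-mass of the $2$-chains on the boundary tori and annuli. Those boundary $2$-chains are built from the pants curves and transversal arcs of the triangulation $\cT_\cP$, each of which sits on a simplicial hyperbolic surface of area bounded by a uniform constant (for $F$, by the Gauss–Bonnet estimate for surfaces of curvature $\le -1$ with cone points of angle $\ge 2\pi$; for $G$, by Proposition \ref{Split}(i)), and the annular faces are ruled annuli whose area is controlled by the boundary lengths, which are bounded by $L'$ (or $\epsilon_1$ on the model side). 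Summing over the $3g-3$ solid-torus pieces and the $2g-2$ pair-of-pants pieces — each contributing a bounded amount because the relevant $2$-chains live on surfaces of uniformly bounded area and involve uniformly many simplices with uniformly bounded-length geodesic edges — yields a total bound $M=M(S)$ depending only on the topology of $S$.

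The main obstacle is exactly the uniformity of this bound in the presence of unbounded geometry: a priori the simplicial hyperbolic surface $F$ may have enormous diameter in $M_\rho$, so the homotopy $H$ could sweep through a huge region, and one cannot naively bound $\int_{S\times I}H^*\omega$ by (area)$\times$(displacement). The resolution is precisely the ``isoperimetric'' step: because each connected piece of the decomposed homotopy has abelian (indeed cyclic or trivial) fundamental group, \eqref{mass} lets us trade the $3$-dimensional volume for the $2$-dimensional area of the boundary, and the boundary area is controlled independently of the diameter — it only sees the pants curves, the transversal arcs, and the ruled annuli, all of which have uniformly bounded geometry because $\cP$ is a pants decomposition realized with short curves on both a simplicial hyperbolic surface (Gauss–Bonnet/Bounded Diameter) and on an extended split level surface (Proposition \ref{Split}). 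A secondary technical point to handle carefully is that the triangulations on the two ends must be literally the same combinatorial $\cT=\cT_\cP$ adapted to $\cP$ so that $\partial H_*C = F_*\cT - G_*\cT$ holds on the nose; this forces one to first homotope $G$ (at bounded cost, folded into $M$) to a simplicial hyperbolic surface with associated triangulation $\cT_\cP$ before building $C$.
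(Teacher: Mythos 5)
Your plan — decompose $S\times I$ into pieces, apply Thurston's isoperimetric inequality \eqref{mass} to trade volume for boundary area, and bound that boundary area using uniform length and area bounds coming from $F$ and Proposition \ref{Split} — is the correct strategy and is the one the paper uses. But the decomposition you propose has a genuine gap. Your pieces of type (b), ``pair of pants $\times I$'', have fundamental group a free group of rank two, which is not abelian, so \eqref{mass} does not apply to them; your assertion that ``each connected piece of the decomposed homotopy has abelian (indeed cyclic or trivial) fundamental group'' is false for these pieces. The volume swept out over a pair of pants is precisely what you cannot bound by elementary means in the presence of unbounded geometry, so this is not a cosmetic omission: the heart of the estimate breaks. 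The paper's essential extra step (Step 2 of its proof) is to cut each pair of pants $Y\subset S\setminus\cP$ further into three \emph{annuli} $A_1, A_2, A_3$: $A_1, A_2$ are collar neighborhoods of two of the boundary curves, separated by a short geodesic seam $\beta$, and $A_3$ is the remaining annulus around the third boundary curve. The new boundary arcs $a_1, a_2, \beta$ are chosen short in \emph{both} metrics — in $\sigma_F$ via Ahlfors' $1$-Lipschitz estimate from $\sigma_F^{\hyp}$ together with the Collar Lemma and right-angled hexagon trigonometry, and in $\sigma_G$ via the geometry of the model tubes. Only after this refinement is every piece $A_i\times I$ a solid torus with cyclic $\pi_1$, and \eqref{mass} becomes applicable after lifting to $\bH^3/\langle\rho(\alpha_i)\rangle$.

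A secondary issue: you propose first homotoping $G$, ``at bounded cost'', to a simplicial hyperbolic surface sharing $\cT_\cP$. That step is asserted but not justified in your proposal, and it is not innocuous — sweeping a surface to piecewise-geodesic position could itself pick up volume, for exactly the same reason the main estimate is delicate. The paper avoids this entirely: it keeps $G$ as the extended split level embedding, builds $H$ directly out of ruled annuli over $\cP$ and ruled disks over $a_1, a_2, \beta$, works with a refinement $\cT'$ of $\cT$ that includes the new edges, and then (Step 5) slides the extra vertices of $F\cT'$ along edges inside the piecewise geodesically convex image of $F$ at zero volume cost, so that $\partial H_*C = F_*\cT - G_*\cT$ holds on the nose. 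Dropping the ``make $G$ simplicial hyperbolic'' detour removes a lemma you would otherwise need to prove and was not actually needed.
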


\begin{proof}
Our plan is to start building  $H$ on disks $\beta\times I$ and annuli $\alpha\times I$ where $\alpha, \beta\subset S$.  These disks and annuli will decompose $S\times I$ into a union of solid tori $A\times I$ where $A\subset S$ is an annulus.  We then continue to extend $H$ on each solid torus $A\times I$.  We will triangulate $\partial (A\times I)$ and obtain upper bounds on the $H$-mass of the $2$-chains $Z_A$ corresponding to the triangulation.  By Inequality (\ref{mass}), for any $3$-chain $C_A$ with $\partial C_A = Z_A$, $\mass_H(C_A)\le \mass_H(Z_A)$, because $A\times I$ has abelian fundamental group.  Summing over solid tori appearing in our decomposition of $S\times I$ and ensuring that $Z_A$ and $Z_{A'}$ agree on their boundaries when $\partial A \cap \partial A' \not= \emptyset$ will yield (\ref{expr}).  Some additional bookkeeping will be required to ensure that   $\partial H_*C = F_*\cT - G_*\cT$.  We will proceed in several steps.  In each step, we will gather bounds on the $H$-mass of various chains. 

\begin{step}\label{step1}Define $H$ on $\cP$.
\end{step}

The curves of $\cP$ appear in the hierarchy $H_\nu$, so their length in $M_\rho$ is uniformly bounded above by a number $L$ which is at least the Bers constant for $S$ (\cite{minsky:ELTI}, Lemma 7.9).  Our map $G:S\to M_\rho$ satisfies properties (i)-(iii) as in Proposition \ref{Split} in the homotopy class of $F$.  The inclusion $S\to S_\cP$ induces a model metric $\sigma_m$ on $S$. %, which is locally $CAT(-1)$.     
By precomposing the map $G: S\to S_\cP\to M_\rho$ with an isotopy of $S$, we may assume that for each $\alpha\in \cP$, $(\alpha^*,\sigma_m) = (\alpha^*, \sigma_F)$.  Identify $\alpha\in \cP$ with its geodesic representative in these metrics, \ie $\alpha = (\alpha^*,\sigma_m) = (\alpha^*,\sigma_F)\subset S$. Note also that $G$ induces a metric $\sigma_G$ on $S$, and ${(S,\sigma_m)\to (S, \sigma_G)}$ is $K$-Lipschitz, because the model map is $K$-Lipschitz.

%In the split level surface $S_\cP$ with its model metric $\sigma_m$, for each $\alpha\in \cP$, $\ell_{\sigma_m}(\alpha^*)<\epsilon_1$.  Identify $S$ with $S_\cP$ and isotope $\cT$ to a triangulation $\cT'$ on $S$ so that the geodesic representative $\alpha^*$ in the model metric is a curve of $\cT'$.   This guarantees that $\ell(G(\alpha))<L'$.

Let $\alpha\in \cP$, and consider the curves $F(\alpha)$, $G(\alpha)\subset M_\rho$.  They are homotopic by  ${h_\alpha:\alpha \times I \to M_\rho}$.  We may assume that for each $x\in \alpha$, $h_\alpha(x,I)$ is a geodesic segment.  Then the image of $h_\alpha$ is a ruled annulus, and so it's area is no more than the sum of the lengths of its boundary components.  Define $H:\cP\times I \to M_\rho$ so that it agrees with $h_\alpha$, for each $\alpha\in \cP$.  If $\tau_\alpha\in \Cb_2(\alpha \times I)$ is a triangulation of the annulus, then $\mass_H(\tau_\alpha)\le L+L'$.     
Our homotopy is now defined on $\cP\times I$, and every component of $S\times I\setminus \cP\times I$ is homeomorphic to the product of a pair of pants and the interval, which is a genus 2 handlebody.  

\begin{step}\label{step2} For each component $Y\subset S\setminus \cP$, decompose $Y$ into a union of annuli $Y = A_1 \cup A_2 \cup A_3$ such that the length of every boundary component of every annulus is uniformly bounded above in both the $\sigma_G$ and $\sigma_F$ metrics.  
\end{step}

Choose a component $Y\subset S \setminus \cP$ and two of three boundary curves $\alpha_1$, $\alpha_2\subset \partial Y =\alpha_1\cup\alpha_2\cup\alpha_3$.  For $i=1, 2$, find collar neighborhoods $A_i=\collar(\alpha_i)\cap Y$  with respect to the $\sigma_F^{\hyp}$ metric.  By the Collaring Theorem, we may choose $A_i$ such that if $a_i =\partial A_i\cap \interior Y$, then $2L>\ell_{\sigma_F^{\hyp}}(a_i ) > 2$.  Since the identity $(S, \sigma_F^{hyp})\to (S, \sigma_F)$ is $1$-Lipschitz, $\ell_{\sigma_F}(a_i)<2L$.  Take $A_i' = Y\cap U(\alpha_i)$ and $a_i'=\partial A_i'\cap \interior Y$, where we have included $Y$ into $S_\cP$, so that by construction of extended split level surfaces, $\ell_{\sigma_m}(a_i')=\epsilon_1$.  Then $\ell_{\sigma_G}(a_i')\le K\epsilon_1=L'$ (which is actually the constant from Proposition \ref{Split} Property (ii)), because $(S, \sigma_m)\to (S, \sigma_G)$ is $K$-Lipschitz.  By construction, $F$  maps $\alpha_1$ and $\alpha_2$ to their geodesic representatives in $M_\rho$, so $\ell_{\sigma_F}(\alpha_i)\le L$ and $\ell_{\sigma_G}(\alpha_i)\le L'$.  We have now identified $A_1$ and $A_2$, and verified that their boundary curves have bounded lengths.  Our next goal is to identify $A_3$.

\begin{figure}[h]
\includegraphics[width=4in]{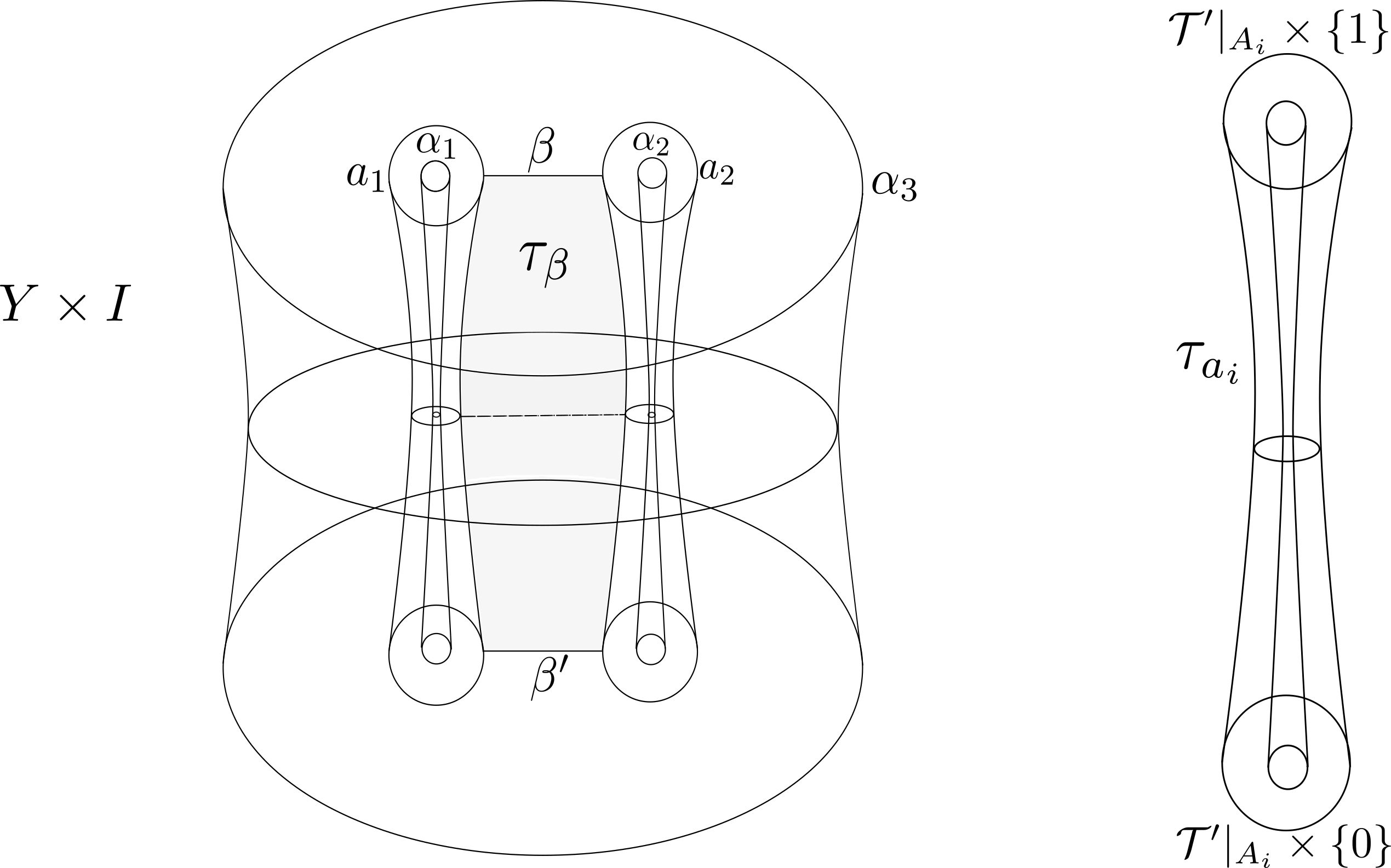}  
\caption{\small{Triangulating the picture on the right yields a $2$-chain 
$Z_i = \tau_{\alpha_i}+ \cT'|_{A_i}\times\{0\}+ \tau_{a_i}+ \cT'|_{A_i}\times\{1\}$
that represents the torus $\partial(A_i\times I)$ for $i=1$, $2$.  Not labeled on the right is $\tau_{\alpha_i}$, which is a triangulation of annulus $\alpha_i\times I$.}}
\label{fchains}
\end{figure}

There is a $\sigma_F^{hyp}$ geodesic arc $\beta$ meeting $a_1$ and $a_2$ orthogonally each in one point, $v_1$ and $v_2$ respectively.  By an argument evoking the hyperbolic trigonometry of a right angled hexagon, there is a constant $b$ such that $\ell_{\sigma_F^{\hyp}}(\beta)<b$.  Similarly, there is a $\sigma_m$ geodesic arc $\beta'$ meeting $a_1'$ and $a_2'$ orthogonally together with a constant $b'$ so that $\ell_{\sigma_m}(\beta')<b'/K$.  Since $(S, \sigma_F^{\hyp})\to (S,\sigma_F)$ is 1-Lipschitz, $\ell_{\sigma_F}(\beta) \le b$.  Similarly, $\ell_{\sigma_G}(\beta')\le b'$.  As in Step \ref{step1}, by precomposing $G$ with a homeomorphism of $S$ isotopic to the identity fixing $\cP$ pointwise, we may assume that $a_i = a_i'\subset Y$ for $i=1$, $2$ and that $\beta=(\beta^*,\sigma_F^{hyp}) = ({\beta'}^*,\sigma_m)\subset Y$.    The surface $Y\setminus (a_1 \cup \beta \cup a_2)$ decomposes as a disjoint union of (the interiors of) the two collars $A_1$ and $A_2$ together with a third annulus $A_3$ with core curve homotopic to $\alpha_3$ (see Figure \ref{fchains}). 
  Then $\partial A_3$ has two components $\alpha_3$ and $a_3$.  We have estimates $\ell_{\sigma_F}(a_3) \le 2b + 4L$ and $\ell_{\sigma_G}(a_3)\le 2b' + 2L'$.  Again, $\ell_{\sigma_F}(\alpha_3) \le L$ and $\ell_{\sigma_G}(\alpha_3)\le L'$, which means that we have completed Step \ref{step2}.  

\begin{step}\label{step3} Define $H$ on  $Y\times I$, extending $H|_{\partial Y}$.  
\end{step}

We  join the arcs $F(\beta)$ and $G(\beta)$ with geodesic segments to obtain a ruled disk $D_\beta: \beta \times I \to M_\rho$ with triangulation $\tau_\beta$.  Join $F(a_1\setminus\{v_1\})$ to $G(a_1\setminus\{v_1\})$ and $F(a_2\setminus\{v_2\})$ to $G(a_2\setminus\{v_2\})$ by geodesic ruled disks $h_{a_i}: a_i\setminus\{v_i\} \times I \to M_\rho$ with triangulations $\tau_{a_i}$.  These ruled disks extend $H$ on all of $(a_1\cup\beta\cup a_2)\times I$.  The area of a geodesic ruled annulus or disk is at most the length of the boundary components that define the ruling.  Using our length estimates from Step \ref{step2}, we see that  $\mass_H(\tau_\beta)\le b+b'$ and  $\mass_H(\tau_{a_i})<2L +L'$ for $i = 1,2$.

Subdivide $\cT$ to a triangulation $\cT'$ that includes $\beta$, $a_1$, and $a_2$, where each is subdivided at its intersection with an arc of $\cT$.  Define $H_0$ on the triangles of $\cT'$ by the restriction of $G$, and define $H_1$ on the triangles of $\cT'$ by the restriction of $F$.   $H$ is now defined on a union of tori $\partial(A_i\times I)$ glued along their boundaries.  The image of each torus bounds an immersed solid torus.  Extend $H$ accordingly so that it is piecewise smooth and defined everywhere on $Y\times I$.   
\begin{step}\label{step4}
Bound the $H$-mass of triangulations of each solid torus $A_i\times I$, for $i =1, 2, 3$.
\end{step}

The torus $Z_3 = \partial (A_3\times I)$ decomposes as a union of 4 annuli as follows.  In the $\sigma_F$-metric, $A_3$ is a proper subset of the 1-Lipschitz image of a hyperbolic pair of pants, so $\mass_H(\cT'|_{A_3}\times \{1\})\le 2\pi$.  The `outer' annulus $\tau_{\alpha_3}$ has $H$-mass at most $L+L'$, and the `inner' annulus is a sum $\tau_{a_1} + \tau_\beta+\tau_{a_2}+\tau_\beta$, so it's $H$-mass is at most $2(2L+L') + 2(b+b')$.  Finally, the `bottom' annulus $(A_3,\sigma_G)$ is a proper subset of the $K$-Lipschitz image of a hyperbolic pair of pants, so  $\mass_H(\cT'|_{A_3}\times\{0\})\le K^22\pi$.  Now, $Z_3$ bounds a 3-chain $C_3$ which is a triangulation of the solid torus $A_3\times I$.  Lifting to the cover $\bH^3/\langle\rho(\alpha_3)\rangle$, we can apply Inequality (\ref{mass}) and conclude that  \[\mass_H(C_3)\le \mass_H(Z_3)\le 2\pi + 3(L+L') + 2(b+b')+ K^22\pi=M'.\]    

We can express the torus $Z_1 = \partial (A_1\times I)$ as the sum \[Z_1=\tau_{\alpha_1} + \cT'|_{A_1}\times \{1\} + \tau_{a_1} + \cT'|_{A_1}\times \{0\}.\]  Being geodesic ruled annuli,  $\area (A_1,\sigma_{F})\le 3L$ and $\area(A_1,\sigma_G)\le 2L'$, so   \[\mass_H(Z_1)\le (L +L') + 3L + (2L +L') + 2L' =M''.\]  As before, $Z_1$ bounds a chain $C_1$ and $\mass_H(C_1)\le M''$.  We have an entirely analogous estimate for the $H$-mass of the 3-chain $C_2$ bound by $Z_2$, which represents $ \partial(A_2\times I)$.  

\begin{step}\label{step5} Final step.  
\end{step}

There were $2g-2$ components of $S\setminus \cP$ and we constructed $3$-chains for three solid tori on each component. The sum of these chains $C$ defines a $3$-chain satisfying $\partial C = \cT'\times \{1\} - \cT'\times \{0\}$.  Let ${M''' = \max\{M', M''\}}$.  We have shown that \[\left|\int_{S\times I} H^*\omega\right| \le \mass_H(C)\le3(2g-2)M''' = M. \]

In step \ref{step3}, we arranged so that we did not create any new vertices on $\cT$ which were not already part of an edge of $\cT$.  Thus, when we subdivide $\cT$ to obtain $\cT'$ to include the new edges, $(F,\cT')$ is still a simplicial hyperbolic surface with the same image as $(F,\cT)$, and $\cT'$ is just a refinement of $\cT$.  We may homotope the new vertices of $F\cT'$, along edges of $F\cT'$, back onto vertices of $F\cT$.  Since $\im F$ is piecewise geodesically convex, the straightened homotopy has image contained in $\im F$, so its volume is zero.  We may therefore assume that $H$ preserves the triangulation $\cT$ and $\partial HC = F\cT - G\cT$. 
\end{proof}

The following construction is a summary of what we have done so far.  Later in this section, we will show that $|\hat\omega(V_k)|\to \infty$ as $k\to \infty$, which is one of two main steps in the proof of Theorem \ref{main1}.

\begin{construction}\label{chains}
Suppose $\rho : \pi_1(S)\to \G$ is discrete and faithful with end invariants $\nu= (\nu^-, \nu^+)$.  Assume $\nu^+\in \partial\cC(S)=\cEL(S)$.

\begin{enumerate}
\item Find a hierarchy of tight geodesics $H_\nu$ joining these end invariants.  
\item Fix a resolution of $H_\nu$ and recover markings $\{\mu_k\}_{k\in J}$ with base curves $\cP_k= \mu_k$, where $J\subset \bZ$ is a (bi)-infinite interval. 
\item For each $k$, find a simplicial hyperbolic surface $F_k:S \to E_S$ whose associated triangulation, $\cT_k$ is adapted to $\cP_k$.
\end{enumerate}
Then for each $k$, there is a 3-chain $V_k$ such that $\partial V_k = F_k\cT_k - F_0\cT_0$.   
\end{construction}

\begin{proof}
Since each of the $F_k$ is a homotopy equivalence, and the $\cT_k$ are all generators for $\Hb_2(S; \bZ)$,  we have that $[F_k\cT_k - F_0\cT_0]=0 \in \Hb_2(S\times\bR;\bZ)$.  There is then a 3-chain $V_k$ such that $\partial V_k = F_k\cT_k - F_0\cT_0$.   
\end{proof}

Now we will prove that the chains $V_k$ from Construction \ref{chains} have large volume after being straightened.  The idea is to use the extended split level surfaces coming from the model $M_\nu$ to find embedded submanifolds of $M_\rho$ with large volume that are `trapped' by our simplicial hyperbolic surfaces.  We need to be able to control the volume of the homotopy between simplicial hyperbolic surfaces and the boundary of our embedded submanifolds, hence Lemma \ref{hmtpy}.  In \S7, we will refine these estimates.  We include this case here, because the work done in \S7 is somewhat more involved.  It uses more extensively the hierarchy machinery and structure of the model.  Here we use only the existence and geometry of extended split level surfaces.  

\begin{lemma}\label{lowerbound}
With $V_k$ as in Construction \ref{chains}, given $n>0$ there is a $k_0$ such that for all $k>k_0$, 
 \[|\hat\omega(V_k)| > n.\]
\end{lemma}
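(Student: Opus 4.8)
\subsection*{Proof proposal for Lemma \ref{lowerbound}}

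The plan is to trap a submanifold of large volume inside $M_\rho$ between the simplicial hyperbolic surfaces $F_k$ and $F_0$: we build it out of the extended split level surfaces coming from the model $M_\nu$, and we use Lemma \ref{hmtpy} to control the error between the $F_j\cT_j$ and the boundary of that submanifold.

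Since $\nu^+\in\cEL(S)$, the fixed resolution of $H_\nu$ is infinite in the positive direction, the main geodesic $g_{H_\nu}$ is infinite toward $\nu^+$, and the extended split level surfaces $S_{\cP_k}\subset M_\nu$ are monotonically arranged and eventually leave every compact subset of $M_\nu$. Each slice $\tau_k$ contains a vertex $v_{\tau_k}\in\cP_k$ of $g_{H_\nu}$, and these march off toward $\nu^+$, so $d_{\cC(S)}(\cP_0,\cP_k)\to\infty$; in particular there is $k_1$ with $\cP_0\cap\cP_k=\emptyset$ for all $k\ge k_1$. For such $k$, Proposition \ref{Split} provides embeddings $G_0,G_k\colon S\to M_\rho$ adapted to $\cP_0,\cP_k$ and an embedded submanifold $W\subset M_\rho$ homeomorphic to $S\times I$ with $\partial W=\im G_0\cup\im G_k$ and $W=\Phi(W')$, where $W'\subset M_\nu$ is the region between $S_{\cP_0}$ and $S_{\cP_k}$. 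The $\nu^+$--end of $M_\nu$ contains infinitely many internal blocks, each of one of two fixed isometry types and hence of model volume at least a fixed $v_{\min}>0$, and $W'$ contains all but finitely many of the blocks lying on the $\nu^+$ side of $S_{\cP_0}$; therefore the model volume of $W'$ tends to infinity with $k$. Since $\Phi$ is $K$--bi-Lipschitz, $\vol(W)\ge K^{-3}v_{\min}\,\#\{\text{blocks in }W'\}\longrightarrow\infty$.

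Now apply Lemma \ref{hmtpy} to $\cP_0$ and to $\cP_k$: for $j\in\{0,k\}$ there is a homotopy $H^{(j)}\colon S\times I\to M_\rho$ from $G_j$ to $F_j$ and a triangulation $C^{(j)}$ of $S\times I$ with $\partial H^{(j)}_*C^{(j)}=F_j\cT_j-G_j\cT_j$ and $\mass_{H^{(j)}}(C^{(j)})<M$, where $M=M(S)$. Set
\[ W_k \;=\; V_k \;-\; H^{(k)}_*C^{(k)} \;+\; H^{(0)}_*C^{(0)},\]
so that $\partial W_k=G_k\cT_k-G_0\cT_0$, which is also the boundary of a fundamental $3$--chain $[W]$ of the embedded submanifold $W$ (after subdividing $\cT_k,\cT_0$ if necessary, which is harmless: a difference of triangulations of $S$ representing $[S]$ bounds a singular $3$--chain in $S$, whose straightening has image of measure zero and so is annihilated by $\hat\omega$). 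Because $M_\rho\simeq S$ we have $H_3(M_\rho;\bR)=0$, so any two $3$--chains with equal boundary have equal degree functions almost everywhere; hence $\deg_{\str W_k}=\deg_{W_k}=\deg_{[W]}$, which equals $\pm1$ on $\interior W$ and $0$ off $\overline{W}$. Consequently
\[ |\hat\omega(W_k)| \;=\; \Bigl|\int_{M_\rho}\deg_{\str W_k}\,\omega\Bigr| \;=\; \vol(W).\]
By the same boundary comparison, $\hat\omega\bigl(H^{(j)}_*C^{(j)}\bigr)=\int_{C^{(j)}}\bigl(H^{(j)}\bigr)^*\omega$, which has absolute value at most $\mass_{H^{(j)}}(C^{(j)})<M$. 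Combining, for every $k\ge k_1$,
\[ |\hat\omega(V_k)| \;\ge\; |\hat\omega(W_k)| - |\hat\omega(H^{(k)}_*C^{(k)})| - |\hat\omega(H^{(0)}_*C^{(0)})| \;>\; \vol(W) - 2M.\]
Since $\vol(W)\to\infty$ as $k\to\infty$, we may choose $k_0\ge k_1$ so large that $\vol(W)>n+2M$ for all $k>k_0$, and then $|\hat\omega(V_k)|>n$, as desired.

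The genuinely hard step is already isolated in Lemma \ref{hmtpy}: bounding, uniformly in $k$, the volume of the homotopy from $F_k$ to the embedded surface $G_k$ even though $\diam(\im F_k)\to\infty$. Granting that, the two remaining points are (i) that the embedded region $W$ trapped between the split level surfaces has volume tending to infinity, which follows from the block decomposition of $M_\nu$ and the bi-Lipschitz model map, and (ii) that any $3$--chain bounding $G_k\cT_k-G_0\cT_0$ records exactly $\vol(W)$, which reduces to $H_3(M_\rho;\bR)=0$. I expect (i)---arranging that the two families of surfaces are disjoint and pinning down which part of the end they enclose---to be the fiddliest of these, though it is routine compared with Lemma \ref{hmtpy}.
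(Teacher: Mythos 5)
Your overall strategy is the same as the paper's: trap an embedded product region $W$ between extended split level surfaces, use Lemma \ref{hmtpy} to bound the cost of moving from the $G_j$'s to the simplicial hyperbolic surfaces $F_j$, and conclude $|\hat\omega(V_k)|\gtrsim\vol(W)$. But there is a genuine gap in the way you evaluate $\hat\omega$ on the intermediate pieces. You apply $\hat\omega$ separately to $W_k$ and to the $H^{(j)}_*C^{(j)}$, claiming $\deg_{\str W_k}=\deg_{W_k}$ (so $|\hat\omega(W_k)|=\vol(W)$) and $\hat\omega(H^{(j)}_*C^{(j)})=\int_{C^{(j)}}(H^{(j)})^*\omega$. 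Neither follows from the ``equal boundary $\Rightarrow$ equal degree'' principle, because $\str W_k$ and $W_k$ do \emph{not} have the same boundary: $\partial(\str W_k)=\str(G_k\cT_k-G_0\cT_0)$, and the maps $G_j$ are bi-Lipschitz embeddings of split level surfaces, not totally geodesic, so $\str G_j\cT_j\neq G_j\cT_j$. The discrepancy $\int_{\str G_j\cT_j-G_j\cT_j}\zeta$ is an integral of $\omega$ over the straightening homotopy of $G_j\cT_j$, which has no a priori bound as $k\to\infty$ (the surfaces exit every compact set). Working backwards you can check that these unbounded errors cancel when you add the three pieces together---as they must, since $\hat\omega(V_k)$ is perfectly well defined---but that cancellation means your three separate ``estimates'' cannot be combined by the triangle inequality to give a lower bound.

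The paper sidesteps this entirely. Since $\hat\omega=\delta\hat\zeta$ for $\hat\zeta(\tau)=\int_{\str\tau}\zeta$ with $d\zeta=\omega$, one has $\hat\omega(V_k)=\hat\zeta(\partial V_k)=\int_{\str\partial V_k}\zeta$. Now $\str\partial V_k=\str(F_k\cT_k-F_0\cT_0)=F_k\cT_k-F_0\cT_0$ \emph{because the $F_j$ are simplicial hyperbolic} (so their image $2$-simplices are already totally geodesic)---this is the only place straightening is invoked, and it acts trivially. Writing $F_k\cT_k-F_0\cT_0=\partial(-C_0+W_k+C_k)$ and applying Stokes converts this into $\int_{-C_0+W_k+C_k}\omega$, a bulk integral over \emph{unstraightened} chains; the triangle inequality and the mass bounds from Lemma \ref{hmtpy} then give $|\hat\omega(V_k)|\ge\vol(W_k)-2M$ with no straightening error to track. (Your block-counting argument for $\vol(W_k)\to\infty$ is fine and actually matches the refinement in \S\ref{main2}; the paper uses a discreteness argument here, but that is a minor difference.) To repair your proof, replace the degree-function bookkeeping with this $\hat\zeta$/Stokes computation, which keeps all the $3$-dimensional chains unstraightened.
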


\begin{proof} For $k$ large, we have that $\cP_0\cap\cP_k=\emptyset$.  Then we can find extended split level surfaces $S_{\cP_0}$ and $S_{\cP_k}$ and $G_0$ and $G_k$ with disjoint images bounding a submanifold $W_k\subset M_\rho$ homeomorphic to $S\times I$ as in Proposition \ref{Split}.  By Lemma \ref{hmtpy}, there are homotopies   \[H_0,~H_k : S\times I \to M_\rho\] and triangulations $C_0$ and $C_k$ of $S\times I$ such that \[\left|\int_{S\times I} H_j^*\omega\right|\le\mass_{H_j}(C_j)<M\] for $j=0, k$.     Triangulate $W_k$ so that $\partial W_k = G_k\cT_k - G_0\cT_0$.  Then \[\str \partial V_k =\str(  F_k\cT_k-F_0\cT_0) =  F_k\cT_k-F_0\cT_0= \partial ( C_0+W_k+ C_k).\]  

What follows is our basic estimate for $|\hat\omega(V_n)|$ from below. 
Since $M_\rho$ is not compact, $\Hb^3_{\dR}(M_\rho;\bR) = 0$ and so there is a smooth 2-form $\zeta\in \Omega^2(M_\rho)$ such that $d\zeta = \omega$.   By Stokes' Theorem, and since $\str$ is a chain map, 

\begin{align*}
|\hat\omega(V_k)| &= \left|\int_{\str V_k}\omega\right| = \left|\int_{\str V_k} d \zeta\right| = \left|\int_{\partial \str V_k} \zeta\right|
\\ & = \left|\int_{\str{\partial V_k}}\zeta\right| = \left|\int_{ \partial ( C_0+ W_k+ C_k) }\zeta\right| 
\\ &\ge \left|\int_{W_k}\omega\right| - \left|\int_{S\times I}H_0^*\omega\right| - \left|\int_{ S\times I} H_k^*\omega\right|
\\ & \ge \vol(W_k) - 2M.
\end{align*}

By construction, $\{W_{k}\}_{k\in \bZ_{\ge0}}$ is a compact exhaustion of the closure of $E_S\setminus K_0$, where $K_0$ is the pre-compact component of $E_S\setminus W_0$.  If the boundary surfaces $\partial W_k$ stay in a compact set, then since they each contain a closed curve of bounded length, we could extract a closed curve of bounded length which is a limit of other bounded length curves, violating discreteness of $\im\rho$.  So the function $k \mapsto \vol(W_{k})$ is increasing and proper. Hence we may find a $k_0$ such that for $k>k_0$, $\vol(W_k)-2M > n$.
\end{proof}

\section{Bounding volume from above}\label{sectionabove}
Using the smooth structure of $M_\rho$, we can bound the size $|\hat\omega(V_i)|$ of any collection $\{V_i\}$ of 3-chains whose boundaries are contained in a compact set $K$.  For an integral chain $V$, we will often abbreviate $\|V\|_1$ to $|V|$.  Our bound will depend on $K$ as well as $|\partial V_i|$.  

As was observed earlier, there is a smooth 2-form $\zeta\in \Omega^2(M_\rho)$ such that $d\zeta = \omega$.
Define $\hat \zeta : \Cb_2(M_\rho) \to \bR$ by $\tau\mapsto \int_ {\str\tau} \zeta$.  Since $\str$ is a chain map, by Stokes' Theorem we have
\[\delta\hat\zeta(\sigma) = \int_{\str\partial\sigma}\zeta =\int_{\partial \str \sigma} \zeta= \int_{\str \sigma}d\zeta =  \int_{\str\sigma} \omega= \hat\omega(\sigma). \] 

Let $K\subset M_\rho$ be a co-dimension 0, compact submanifold.   Then $\zeta$ defines a smooth, antisymmetric bilinear function $\zeta(K)$ on the space of $2$-planes over each point in $K$.  This space is compact, so this function achieves its maximum, called $\|\zeta(K)\|_\infty<\infty$.

\begin{lemma}\label{upper}
 Let $K\subset M_\rho$ be a compact set, and suppose that $(F_0,\cT_0)$ and $(F_1,\cT_1)$ are simplicial hyperbolic surfaces that have nonempty intersection with $K$.  Suppose  $V\in \Cb_3(M_\rho)$ is such that $\partial V = F_1\cT_1 -F_0\cT_0$.  There is a constant $m = m(K, M_\rho)$ such that \[\hat\omega(V) \le m|\partial V|.\]
\end{lemma}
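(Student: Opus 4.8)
The plan is to use the primitive $\zeta$ with $d\zeta = \omega$ and Stokes' theorem to convert the volume integral over $\str V$ into an integral over $\str \partial V = F_1\cT_1 - F_0\cT_0$, and then to bound that integral using the fact that both simplicial hyperbolic surfaces are trapped in a fixed compact set by the Bounded Diameter Lemma. Concretely, I would first apply the Bounded Diameter Lemma of Bonahon: since $(F_0,\cT_0)$ and $(F_1,\cT_1)$ meet the compact set $K$ and are simplicial hyperbolic surfaces, there is a compact set $K' = K'(K)$ containing $\im F_0 \cup \im F_1$. Enlarge $K'$ to a codimension-zero compact submanifold if necessary. Then $\|\zeta(K')\|_\infty < \infty$ is a finite constant depending only on $K'$ (hence on $K$ and $M_\rho$).

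Next, using that $\str$ is a chain map together with Stokes' theorem, I would write
\[
\hat\omega(V) = \int_{\str V} \omega = \int_{\str V} d\zeta = \int_{\partial \str V}\zeta = \int_{\str \partial V}\zeta = \hat\zeta(\partial V) = \hat\zeta(F_1\cT_1) - \hat\zeta(F_0\cT_0).
\]
Now each of $F_0\cT_0$ and $F_1\cT_1$ is an integral $2$-chain supported in $K'$, and moreover since $(F_i,\cT_i)$ is simplicial hyperbolic we have $\str F_i \cT_i = F_i \cT_i$, so each straightened simplex appearing is a totally geodesic triangle contained in $K'$. For a single totally geodesic $2$-simplex $\sigma$ with image in $K'$, the hyperbolic area of $\sigma$ is at most $\pi$ (the area of an ideal triangle), so
\[
\left|\int_{\str\sigma}\zeta\right| \le \|\zeta(K')\|_\infty \cdot \area(\str\sigma) \le \pi\,\|\zeta(K')\|_\infty.
\]
Summing over the simplices of $F_0\cT_0$ and $F_1\cT_1$ with the triangle inequality gives
\[
|\hat\omega(V)| \le \pi\,\|\zeta(K')\|_\infty\big(|\cT_0| + |\cT_1|\big) \le m(K,M_\rho)\,|\partial V|,
\]
where $m(K,M_\rho) = \pi \|\zeta(K')\|_\infty$ and we use $|\partial V| = |F_1\cT_1 - F_0\cT_0| = |\cT_0| + |\cT_1|$ since the two chains have disjoint simplex supports (distinct surfaces) or, more safely, $|\partial V| \ge$ the total number of simplices after cancellation while $|\cT_i|$ are bounded by $10g-10$; one can absorb this into the constant, noting $|\cT_i|$ depends only on $S$.

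I do not expect any serious obstacle here; the statement is essentially an exercise in combining the primitive $\zeta$, Stokes' theorem, the chain-map property of $\str$, and the Bounded Diameter Lemma. The one point requiring a little care is the bookkeeping in the last inequality: the bound is naturally in terms of $|\cT_0| + |\cT_1|$, which is a constant depending only on the topology of $S$, whereas the statement asks for a bound in terms of $|\partial V|$; since $\partial V$ is precisely $F_1\cT_1 - F_0\cT_0$ and its $1$-norm counts (with multiplicity) the simplices surviving cancellation, one should either observe that for the chains of interest no cancellation occurs, or simply note that the inequality with constant $m = \pi\|\zeta(K')\|_\infty$ and right-hand side $m(|\cT_0|+|\cT_1|)$ implies the stated one after enlarging $m$ to absorb the (topologically bounded) ratio $(|\cT_0|+|\cT_1|)/|\partial V|$ when $\partial V \neq 0$, and is trivial when $\partial V = 0$.
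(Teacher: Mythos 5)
Your argument is the same as the paper's: Bounded Diameter Lemma to obtain a compact $K'$ containing both $\im F_0$ and $\im F_1$, Stokes plus the chain-map property of $\str$ to reduce $\hat\omega(V)$ to $\hat\zeta(\partial V)$, and the pointwise bound $|\hat\zeta(\tau)|\le\pi\|\zeta(K')\|_\infty$ on geodesic triangles in $K'$. The closing digression about cancellation is unnecessary---applying the triangle inequality directly to $\hat\zeta(\partial V)=\sum_\sigma a_\sigma\hat\zeta(\sigma)$ gives $|\hat\zeta(\partial V)|\le\sum_\sigma|a_\sigma|\,\pi\|\zeta(K')\|_\infty=m|\partial V|$ at once, with no need to compare $|\partial V|$ to $|\cT_0|+|\cT_1|$ or adjust the constant.
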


\begin{proof}

We may assume that $K$ is a co-dimension 0 submanifold of $M_\rho$.  
If $\tau : \Delta_2\to K$ is a singular 2-simplex, 
 \begin{equation}\label{areaD}
 |\hat\zeta(\tau)|=\left|\int_{\str\tau} \zeta\right|\le \|\zeta(K)\|_\infty\int_{\str\tau} d\area \le \|\zeta(K)\|_\infty\pi. 
 \end{equation}

In (\ref{areaD}), $d\area$ is the hyperbolic area, and we have used the fact that any hyperbolic triangle has area no more than $\pi$.  

By the Bounded Diameter Lemma, there is a compact $K'\supset K$ such that if  $(F,\cT)$ is a simplicial hyperbolic surface whose intersection with $K$ is non-empty, then $\im F\subset K'$.  Take $m = \pi \|\zeta(K')\|_\infty$, so that  \[|\hat\omega(V)| = |\hat\zeta(\partial V)| \le \sum_{\tau \in \partial V} |\hat\zeta(\tau)| \le \sum_{\tau \in \partial V} \pi\|\zeta(K')\|_\infty= m|\partial V|. \]
\end{proof}
The point of this section is to provide the following uniform upper bound.
\begin{lemma}\label{uniformupperbound}
Suppose that $\lambda\subset S$ is a lamination which is not an ending lamination for $M_\rho$, and let $\alpha_0,$ $\alpha_1$, ... be simple closed curves on $S$ whose Hausdorff limit contains $\lambda$.   Find a sequence of simplicial hyperbolic surfaces $(F_i, \cT_i)$ adapted to pants decompositions containing $\alpha_i$.  Suppose that $V_1$, $V_2$, ... is a sequence of $3$-chains satisfying $\partial V_i = F_i\cT_i-F_0\cT_0$. Then there is a number $C$ such that \[|\hat\omega(V_i)| \le C\] for all $i$.  
\end{lemma}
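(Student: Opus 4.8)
The plan is to exhibit a single compact set $K \subset M_\rho$ that meets every simplicial hyperbolic surface $(F_i,\cT_i)$ in the sequence, together with a uniform bound on $|\partial V_i| = |F_i\cT_i| + |F_0\cT_0|$, and then invoke Lemma \ref{upper}. The bound on $|\partial V_i|$ is immediate: each $\cT_i$ is a triangulation of $S$ adapted to a pants decomposition, so $|\cT_i| = 10g-10$ is a constant depending only on $S$; hence $|\partial V_i| \le 2(10g-10)$ for all $i$. So the entire content is the existence of the compact $K$.

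The key point is that $\lambda$ is \emph{not} an ending lamination for $M_\rho$. Since the $\alpha_i$ Hausdorff-converge to a lamination containing $\lambda$, and $\lambda$ is unrealized (it is not realized by a pleated surface in the relevant homotopy class — this is where ``not an ending lamination'' is used, via Otal's realizability criterion and the discussion following Theorem \ref{NS}), the contrapositive of the non-escaping-implies-realized dichotomy must be turned around: I want to say that the geodesic representatives $i(\alpha_i)^*$ do \emph{not} escape every compact set, i.e. there is a compact $K_0 \subset M_\rho$ with $i(\alpha_i)^* \cap K_0 \ne \emptyset$ for infinitely many $i$ (indeed for all $i$ after passing to the relevant subsequence; but since we only need a uniform bound, a single $K_0$ meeting all of them suffices after noting each individual $\alpha_i^*$ meets \emph{some} compact set and the family can be controlled). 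More carefully: if the $\alpha_i^*$ did leave every compact set, then since $\ell_{M_\rho}(\alpha_i^*)$ is bounded (the $\alpha_i$ lie in pants decompositions realized with bounded length, or one bounds length directly), a geometric limit argument would realize the Hausdorff limit — and hence $\lambda$ — as a geodesic lamination carried by a pleated surface, contradicting that $\lambda$ is unrealized. Thus the $\alpha_i^*$ stay in a fixed compact $K_0$.

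Now each $F_i$ is a simplicial hyperbolic surface adapted to a pants decomposition containing $\alpha_i$, so by construction $F_i$ maps $\alpha_i$ to its geodesic representative $\alpha_i^* = i(\alpha_i)^*$; in particular $\im F_i \cap K_0 \ne \emptyset$. By the Bounded Diameter Lemma, there is a compact $K' \supset K_0$ with $\im F_i \subset K'$ for all $i$, and likewise $\im F_0 \subset K'$ after enlarging $K'$ if necessary. Applying Lemma \ref{upper} with $K = K_0$ (whose associated enlargement is $K'$) gives $|\hat\omega(V_i)| \le m(K_0,M_\rho)\,|\partial V_i| \le 2m(K_0,M_\rho)(10g-10) =: C$, uniformly in $i$.

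The main obstacle is the second paragraph: rigorously extracting non-escape of the $\alpha_i^*$ from the hypothesis that $\lambda$ is not an ending lamination. One must correctly package the statement — presumably $M_\rho$ here is the cover associated to the end $S$, so ``not an ending lamination of $M_\rho$'' means $\lambda$ is not the ending lamination of either end of $M_\rho \cong S \times \bR$, hence by Otal and the realizability theory cited after Theorem \ref{NS} it is realized, or its approximants do not escape. One needs the uniform length bound on $\alpha_i^*$ (from the pants/Bers bound or directly) to run the compactness argument, and to be careful that ``the Hausdorff limit contains $\lambda$'' (rather than equals it) still forces the realized/non-escaping conclusion — this is fine since realizability of a lamination implies realizability of any sublamination, and unrealizability propagates upward to any lamination containing $\lambda$ that the $\alpha_i$ actually limit to, by minimality and filling of $\lambda$. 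Once non-escape is in hand, the rest is a routine application of the Bounded Diameter Lemma and Lemma \ref{upper}.
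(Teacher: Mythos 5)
Your proposal follows the paper's argument exactly: bound $|\partial V_i|$ by $2(10g-10)$, show the geodesics $\alpha_i^*$ stay in a fixed compact set so that each $\im F_i$ meets it, invoke the Bounded Diameter Lemma to trap all the $\im F_i$ in a larger compact set $K'$, and then apply Lemma \ref{upper}. The paper's own proof is three sentences and simply asserts the compactness step; you try to justify it, which is where the write-up goes wrong.

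The difficulty is a directional error about realizability that you repeat several times. You write that $\lambda$ is \emph{unrealized} and attribute this to the hypothesis ``not an ending lamination,'' but the implication runs the other way: the chain quoted in the paper around Theorem \ref{NS} and Otal's work is ``not realized $\Rightarrow$ ending lamination'' and ``not realized $\Rightarrow$ approximating geodesics escape.'' Hence ``$\lambda$ is not an ending lamination of $M_\rho$'' yields that $\lambda$ \emph{is} realized. Your sentence ``if the $\alpha_i^*$ did leave every compact set, then \dots a geometric limit argument would realize the Hausdorff limit \dots as a geodesic lamination carried by a pleated surface, contradicting that $\lambda$ is unrealized'' has both halves reversed: escaping geodesic approximants are precisely the condition for $\lambda$ to be the ending lamination, i.e.\ to be \emph{not} realized. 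The two sign errors happen to cancel and you reach the right conclusion, but the reasoning as written is not correct. The clean version is either: (a) if the $\alpha_i^*$ escaped every compact set they would exit a geometrically infinite end and $\lambda$ would be its ending lamination, contradicting the hypothesis; or (b) since $\lambda$ is not an ending lamination it is realized by a pleated surface, and since $\alpha_i\to\lambda$ in the Hausdorff topology the $\alpha_i^*$ remain in a neighborhood of that pleated surface. Your final paragraph (``it is realized, or its approximants do not escape'') shows you eventually landed on the right direction --- that sentence should replace the earlier misstatements rather than sit alongside them.
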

\begin{proof}
Since $\lambda$ is not an ending lamination for $M_\rho$, the $\alpha_i^*$ stay in a compact set $K$, which by construction, all of the $\im F_i$ intersect.  By the Bounded Diameter Lemma, we find a compact set $K'$ such that $\im F_i\subset K'$ for all $i$.  We apply Lemma \ref{upper} to find an $m = m(K')$ such that $|\hat\omega(V_i)| \le |\partial V_i| m= 2(10g-10)m$ for all $i$.  
\end{proof}

\section{Infinite ends are cohomologically distinct}\label{main1}
We are now ready to state the first of our main results.  We first consider closed surface groups, and generalize to the setting of finitely generated Kleinian groups without parabolic cusps by lifting to the covers corresponding to the ends of quotient manifolds.  

We begin with the following observation.  The following lemma will allow us to replace a map $S\to S\times \bR$ with a homotopic map at a bounded cost. 
\begin{lemma}\label{straighten}Let $X$ be a connected CW complex and $\alpha\in \Cb_b^{n+1}(X;\bR)$.  Suppose $f, g: Y\to X$ are homotopic mappings of an $n$-dimensional triangulated CW complex $Y$.  There is an $(n+1)$-chain $U$ such that $\partial U = fY-gY$ and such that \[|\alpha(U)| \le (n+1)|Y| \|\alpha\|_\infty.\]  
\end{lemma}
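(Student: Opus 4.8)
The plan is to build the chain $U$ from an explicit triangulation of the homotopy $Y \times I$ and then bound $\alpha(U)$ simplex-by-simplex using the boundedness of $\alpha$. Let $F: Y\times I \to X$ be a homotopy with $F|_{Y\times\{0\}} = g$ and $F|_{Y\times\{1\}} = f$. First I would fix the standard simplicial (prism) decomposition of $\Delta_n \times I$: each $n$-simplex $\tau$ of $Y$ gives rise to a prism $\tau \times I$, and $\tau \times I$ is triangulated into exactly $n+1$ singular $(n+1)$-simplices in the usual way (the prism decomposition coming from the shuffle/staircase triangulation of $\Delta_n\times\Delta_1$). Pushing these simplices forward by $F$ gives a singular $(n+1)$-chain $U := F_*(Y\times I) \in \Cb_{n+1}(X;\bR)$, where $Y\times I$ carries this product triangulation.

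The key computation is the boundary formula for the prism operator. Since $\partial(\tau\times I) = \tau\times\{1\} - \tau\times\{0\} - (\partial\tau)\times I$ at the level of triangulated chains, summing over all $n$-simplices $\tau$ of $Y$ the internal faces $(\partial\tau)\times I$ cancel in pairs (because $Y$ is a closed/cycle triangulated complex representing whatever class it does, or more simply because $\partial$ is a chain map and $\partial\partial = 0$ forces the cancellation), leaving $\partial U = \partial F_*(Y\times I) = F_*(Y\times\{1\}) - F_*(Y\times\{0\}) = fY - gY$. This is exactly the required boundary condition. I would state this cleanly by invoking the standard prism operator $P: \Cb_n(Y) \to \Cb_{n+1}(Y\times I)$ from any algebraic topology text (e.g. Hatcher), which satisfies $\partial P + P\partial = (i_1)_* - (i_0)_*$, and then set $U = F_* P(Y)$, noting $P\partial Y$ contributes a term bounded the same way or vanishes when $Y$ is a cycle; in the application $Y = S$ is a closed surface so $\partial Y = 0$ and $U = F_*P(S)$ works directly.

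For the norm estimate: the triangulated chain $P(Y) = \sum_{\tau} P(\tau)$ has $\|P(Y)\|_1 = (n+1)\|Y\|_1$ since each of the $|Y|$ top simplices of $Y$ contributes exactly $n+1$ simplices to the prism, each with coefficient $\pm 1$ (one must check the shuffle triangulation assigns unit coefficients, which it does). Then $U = F_*P(Y)$ is a singular $(n+1)$-chain with $\|U\|_1 \le (n+1)|Y|$, so
\[
|\alpha(U)| \le \|\alpha\|_\infty \, \|U\|_1 \le (n+1)\,|Y|\,\|\alpha\|_\infty,
\]
which is the claimed bound. I do not expect any serious obstacle here; the only point requiring a little care is making sure the product triangulation of $Y\times I$ is consistent across shared faces of adjacent $n$-simplices of $Y$ (so that the internal faces genuinely cancel), which is handled by ordering the vertices of $Y$ and using the induced order on each $\tau\times I$ — the standard device that makes the prism operator well-defined. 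Everything else is bookkeeping with signs and the definition of the dual norm $\|\alpha\|_\infty = \sup_\sigma|\alpha(\sigma)|$.
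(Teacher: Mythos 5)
Your proposal is correct and matches the paper's proof essentially line for line: both build a homotopy $H: Y\times I \to X$, apply the prism operator to get a chain $u$ in $\Cb_{n+1}(Y\times I)$ with the right boundary and with exactly $(n+1)|Y|$ top simplices, push forward by $H$, and bound $|\alpha(U)|$ simplex-by-simplex by $\|\alpha\|_\infty$. The one thing you add that the paper elides is the explicit observation that the boundary identity $\partial U = fY - gY$ requires $\partial Y = 0$ as a chain (i.e.\ $Y$ a cycle), which does hold in every application since $Y$ is a closed surface.
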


\begin{proof}
Find a homotopy $H : Y\times I \to X$ so that $H_0 = g$ and $H_1 = f$. The triangulation of $Y$ gives a cell decomposition of $Y\times I $ by prisms.  Apply the prism operator to obtain $u \in \Cb_{n+1}(Y\times I)$ with the property that $\partial u =fY-gY$.  The number of $n$-faces of $Y$ is $|Y|$, and so $|u| = (n+1)|Y|$ because each prism decomposes as a union of $n+1$ $(n+1)$-simplices.  Take  $ U =Hu$ and write  $U= \sum_{i = 1}^{|U|} \sigma_i$.  Then we have 

\[|\alpha(U)| \le \sum_{i=1}^{|U|} | \alpha(\sigma_i) | \le (n+1)|Y|\|\alpha\|_\infty.\]

\end{proof}
To apply this observation, suppose that $\rho: \pi_1(S)\to \G$ is a marked Kleinian surface group without parabolics, $(F,\cT)$ is a simplicial hyperbolic surface inducing $\rho$ on fundamental groups with triangulation adapted to a pants decomposition $\cP$, and $G: S\to M_\rho$ is any other map homotopic to $F$.  If $U$ is a $3$-chain with boundary $\partial U = F\cT-G\cT$, applying Lemma \ref{straighten}, we have 
\begin{equation}\label{application}
|\hat\omega(U)| \le 3(10g-10)v_3.
\end{equation}

The objective here is to compare the bounded fundamental classes of  distinct hyperbolic metrics on $S\times \bR$; we need a chain complex where we can compare them.  We will work in the space $\Cb_b^3(S\times \bR;\bR)$.  Recall that $\Hyp_0(S\times \bR)$ is the set of pairs $(f,N)$, where $f: S\times \bR \to N$ is a homotopy equivalence and $N$ is a hyperbolic manifold homeomorphic to $S\times \bR$ without parabolic cusps.  Let $\{(f_\alpha, N_\alpha):\alpha\in \Lambda \} \subset \Hyp_0(S\times \bR)$.  We will give an alternate description of the bounded fundamental class so that we may compare the co-cycles arising from different metrics on $S\times \bR$ directly.  
There are diffeomorphisms in the homotopy classes of the $f_\alpha: S\times \bR \to N_\alpha$.  Assume $f_\alpha$ are as such.  Pull back the hyperbolic metrics via $f_\alpha$ to obtain hyperbolic metrics on $S\times \bR$ called $\sigma_\alpha$.  Define straightening maps by conjugation so that the diagram \[\xymatrix{ 
{\Cb_\bullet (S\times \bR)} \ar[r]^{\str_\alpha} \ar[d]_{{f_\alpha}_*} &{\Cb_\bullet (S\times \bR)}  \\
{\Cb_\bullet (N_\alpha)} \ar[r]^{\str} & {\Cb_\bullet (N_\alpha)} \ar[u]_{{f_\alpha\inverse}_*}
}\]
commutes.  Then each $\str_\alpha$ is a chain map, because it is a composition of chain maps.   Push forward the volume form $\omega\in \Omega^3(\bH^3)$ to obtain volume forms $\omega_\alpha\in \Omega^3(S\times \bR)$ using the composition \[ \xymatrix{
 & {\bH^3} \ar[d]_{\pi_\alpha} \ar[dl]\\
S\times \bR &\ar[l]^{f_\alpha\inverse}   {N_\alpha}}
\]
so that $(f_\alpha\inverse\circ \pi_\alpha)^*\omega_\alpha = \omega$ for $\alpha\in\Lambda$.  Define $\hat\omega_\alpha: \Cb_3(S\times \bR) \to \bR$ by $\hat\omega_\alpha( \sigma) = \int_{\str_\alpha \sigma}\omega_\alpha$.  Recall that $\Hb^3_b(\pi_1(S);\bR)$ is isometrically identified with $\Hb^3_b(S\times \bR; \bR)$.  

The main idea of the proof of the following theorem is that we can find chains in $\Cb_3(S\times \bR)$ whose boundaries have uniformly bounded complexity, and such that the volume of this chain is very large when straightened with one metric while uniformly bounded with respect to another straightening.  Thus, if we write any finite linear combination of co-cycles $\sum a_i \hat\omega_{\alpha_i}$ as a co-boundary $B$, we see that $B$ cannot be a bounded co-boundary when the end invariants of $\{N_{\alpha_i}\}$ are sufficiently different.  
\begin{theorem}\label{mainthm1}
Let $\{(f_\alpha, N_\alpha):\alpha\in \Lambda \} \subset \Hyp_0(S\times \bR)$ be such that $N_\alpha$ has a geometrically infinite end invariant $\lambda_\alpha\in \cEL(S)$ that is different from the geometrically infinite end invariants of $N_\beta$ whenever $\alpha\not=\beta\in \Lambda$.    Then \[\{[\hat\omega_\alpha]: \alpha\in \Lambda\} \subset \Hb^3_b(\pi_1(S);\bR)\] is a linearly independent set.    
\end{theorem}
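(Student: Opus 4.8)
The plan is to argue by contradiction, using the $3$-chains of Construction~\ref{chains}, the lower bound of Lemma~\ref{lowerbound}, and the upper bound of Lemma~\ref{uniformupperbound}. Throughout I identify each $N_\alpha$ with $S\times\bR$ carrying the pulled-back metric $\sigma_\alpha$, so that $\hat\omega_\alpha$ is the straightened volume cochain for $\sigma_\alpha$. Suppose $\alpha_1,\dots,\alpha_n\in\Lambda$ are distinct and $\sum_{i=1}^n a_i[\hat\omega_{\alpha_i}]=0$ in $\Hb^3_b(S\times\bR;\bR)$ with the $a_i$ not all zero; after relabeling, assume $a_1\neq0$, and let $\lambda_1=\lambda_{\alpha_1}\in\cEL(S)$ be the distinguished geometrically infinite end invariant of $N_{\alpha_1}$, which by hypothesis differs from every geometrically infinite end invariant of $N_{\alpha_j}$ for $j\neq1$. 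I aim to produce $3$-chains $V_k\in\Cb_3(S\times\bR)$ with (a) $|\partial V_k|$ bounded independently of $k$, (b) $|\hat\omega_{\alpha_1}(V_k)|\to\infty$, and (c) $\sup_k|\hat\omega_{\alpha_j}(V_k)|<\infty$ for each $j\neq1$. Granting (a)--(c), write the vanishing class as $\sum_i a_i\hat\omega_{\alpha_i}=\delta B$ with $B\in\Cb^2_b(S\times\bR;\bR)$; then $\bigl|\sum_i a_i\hat\omega_{\alpha_i}(V_k)\bigr|=|B(\partial V_k)|\le\|B\|_\infty\,|\partial V_k|$ is bounded, whereas $\bigl|\sum_i a_i\hat\omega_{\alpha_i}(V_k)\bigr|\ge|a_1|\,|\hat\omega_{\alpha_1}(V_k)|-\sum_{j\neq1}|a_j|\,|\hat\omega_{\alpha_j}(V_k)|\to\infty$, a contradiction. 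So all $a_i=0$, which is the asserted linear independence.

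For (a) and (b): label the ends of $N_{\alpha_1}$ so that $\nu^+=\lambda_1$, and apply Construction~\ref{chains} to $\rho_{\alpha_1}$, obtaining a hierarchy $H_\nu$, a resolution with base curves $\cP_k$, simplicial hyperbolic surfaces $F_k$ adapted to $\cP_k$, and $3$-chains $V_k$ with $\partial V_k=F_k\cT_k-F_0\cT_0$. Each $\cT_k$ has $10g-10$ triangles, so $|\partial V_k|\le 2(10g-10)$ for all $k$, giving (a); and Lemma~\ref{lowerbound} gives $|\hat\omega_{\alpha_1}(V_k)|\to\infty$, which is (b).

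For (c), fix $j\neq1$. Since $\lambda_1$ is not a geometrically infinite end invariant of $N_{\alpha_j}$ it is not an ending lamination of $N_{\alpha_j}$, and the base curves $\alpha_k$ of $\cP_k$ Hausdorff-converge to a lamination containing $\lambda_1$ (this is built into Minsky's passage from end invariants to $H_\nu$ in \S\ref{invnts}). Hence Lemma~\ref{uniformupperbound} applies with the metric $\sigma_{\alpha_j}$: choosing $\sigma_{\alpha_j}$-simplicial hyperbolic surfaces $F'_k$ adapted to pants decompositions containing $\alpha_k$, and chains $V'_k$ with $\partial V'_k=F'_k\cT_k-F'_0\cT_0$, one gets $|\hat\omega_{\alpha_j}(V'_k)|\le C_j$ for all $k$. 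Now $F_k$ and $F'_k$ are homotopic in $S\times\bR$ (both are homotopic to the $S$-level surface), so Lemma~\ref{straighten} applied to the cochain $\hat\omega_{\alpha_j}$, which has $\|\hat\omega_{\alpha_j}\|_\infty=v_3$, supplies chains $U_k,U_0$ with $\partial U_k=F'_k\cT_k-F_k\cT_k$, $\partial U_0=F'_0\cT_0-F_0\cT_0$, and $\max\{|\hat\omega_{\alpha_j}(U_k)|,|\hat\omega_{\alpha_j}(U_0)|\}\le 3(10g-10)v_3$. The chain $V'_k-U_k+U_0$ has the same boundary as $V_k$; since $\hat\omega_{\alpha_j}=\delta\hat\zeta_{\alpha_j}$ for a cochain $\hat\zeta_{\alpha_j}$ as in \S\ref{sectionabove}, the value $\hat\omega_{\alpha_j}$ of a chain depends only on its boundary, so $|\hat\omega_{\alpha_j}(V_k)|=|\hat\omega_{\alpha_j}(V'_k-U_k+U_0)|\le C_j+6(10g-10)v_3$ for all $k$, which is (c).

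I expect the genuine content to lie entirely in the two estimates already in hand. Lemma~\ref{lowerbound} rests on the boundary surfaces of the exhausting submanifolds $W_k\subset N_{\alpha_1}$ leaving every compact set --- forced by discreteness of $\im\rho_{\alpha_1}$ since each carries a closed curve of bounded length --- and Lemma~\ref{uniformupperbound} rests on $\lambda_1$ being realized in $N_{\alpha_j}$, which confines the approximating geodesics $\alpha_k^*$ to a fixed compact set. Granting these, the remaining difficulty is purely bookkeeping: matching up the simplicial hyperbolic surfaces used to build $V_k$ with those needed to invoke Lemma~\ref{uniformupperbound} in a \emph{different} metric, via Lemma~\ref{straighten}, while keeping every chain-level boundary identity exact. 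The point I would be most careful about is verifying that the resolution of $H_\nu$ genuinely exits the hierarchy toward $\lambda_1$, so that the $\alpha_k$ really do Hausdorff-converge to a lamination containing $\lambda_1$ and Lemma~\ref{uniformupperbound} is applicable.
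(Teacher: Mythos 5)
Your proposal is correct and follows essentially the same route as the paper: both arguments apply Construction~\ref{chains} to the chosen representation, use Lemma~\ref{lowerbound} to make $|\hat\omega_{\alpha_1}(V_k)|$ large, invoke Lemma~\ref{uniformupperbound} via simplicial hyperbolic surfaces in the other metrics to bound $|\hat\omega_{\alpha_j}(V_k)|$ for $j\neq 1$, and then use Lemma~\ref{straighten} (together with the fact that $\hat\omega_{\alpha_j}=\delta\hat\zeta_{\alpha_j}$, so these cochains depend only on boundaries) to transfer the upper bounds between homotopic surfaces. The only difference is minor bookkeeping in how the comparison chain is assembled --- you form $V'_k-U_k+U_0$ explicitly, while the paper splits $V_k^i=(V_k^i-C_k^j)+C_k^j$ --- and you correctly have $|\partial V_k|=2(10g-10)$, where the paper's proof contains a small typo.
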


\begin{proof} To  show linear independence, we consider finite linear combinations $\sum_{\ell = 1}^n a_\ell[\hat\omega_{\alpha_\ell}]$. Suppose that  $\sum_{\ell = 1}^n a_\ell[\hat\omega_{\alpha_\ell}] = 0$, \ie suppose that there is an $A\in \Cb^2_b(S\times \bR; \bR)$ such that $\delta A = \sum_{\ell = 1}^n a_\ell\hat\omega_{\alpha_\ell}$.   Fix $i \in  \{ 1, ..., n\}$, and without loss of generality, assume that $\nu_{\alpha_i}^+ = \lambda_{\alpha_i}$. Apply Construction \ref{chains} to the representation ${f_{\alpha_i}}_*: \pi_1(S)\to \G$ to obtain a sequence of pants decompositions $\cP_k^i$, simplicial hyperbolic surfaces $(F_k^i:S\to N_{\alpha_i},\cT_k^i)$, and 3-chains $V_k^i$.  We identify $N_{\alpha_i}$ isometrically with $(S\times \bR, \sigma_{\alpha_i})$ and assume that $\im F_k^i\subset S\times \bR$, for all $k$.  

For $j\not = i$, we now find simplicial hyperbolic surfaces $B_k^j: S\to N_{\alpha_j}= (S\times \bR, \sigma_{\alpha_j})$ adapted to $\cT_k^i$.  For fixed $j$, the surfaces $\im B_k^j\subset (S\times \bR, \sigma_{\alpha_j})$ cannot leave every compact set. Otherwise, since the $B_k^j$ map the curves $\alpha_k^i\in \cP_k^i$ to their geodesic representatives in the $\sigma_{\alpha_j}$ metric, we would have that $\alpha_k^i\to \nu_{\alpha_j}^+$ or $\alpha_k^i\to\nu_{\alpha_j}^-$ as $k\to \infty$ by definition of the end invariants.  Since $\alpha_k^i\to \lambda_{\alpha_i}$ as $k\to \infty$, this would contradict our hypothesis that $\nu_{\alpha_j}^+\not=\lambda_{\alpha_i}$ and $\nu_{\alpha_j}^-\not=\lambda_{\alpha_i}$ for $i\not=j$.  Construct 3-chains $C_k^j$ with boundary $B_k^j\cT_k^i - B_0^j\cT_0^i$. Then by Lemma \ref{uniformupperbound}, \begin{equation}\label{high}|\hat\omega_{\alpha_j}(V_k^i)| = |\hat\omega_{\alpha_j}(V_k^i -C_k^j)+ \hat\omega_{\alpha_j}(C_k^j)| \le |\hat\omega_{\alpha_j}(V_k^i -C_k^j)| +C_{i,j} \end{equation} where $C_{i,j}$ depends on the sequence $ \{\cP_k^i\}$ and the metric $\sigma_{\alpha_j}$, but is independent of $k$.  
By Lemma \ref{straighten}, \begin{equation}\label{app2}|\hat\omega_{\alpha_j}(V_k^i-C_k^j)| \le 2c\end{equation} where $c=v_3(10g-10)3$; compare this with the discussion preceding Inequality (\ref{application}) above.  Combining (\ref{high}) and (\ref{app2}) and collecting constants, we have \begin{equation}\label{UPPER} |\hat\omega_{\alpha_j}(V_k^i)|\le c_{i,j}.
\end{equation}

Recalling that $\delta A = \sum_{\ell = 1}^n a_\ell \hat\omega_{\alpha_\ell}$, we have 
\begin{align*}
 |(\sum_{\ell = 1}^n a_\ell\hat\omega_{\alpha_\ell})(V_k^i) |=|(\delta A)(V_k^i)|=|A(\partial V_k^i)|\le\|A\|_\infty |\partial V_k^i| 
 \end {align*}
On the other hand, by Lemma \ref{lowerbound}, given $n$, we can find $k_0$ such that for $k>k_0$, $ |\hat\omega_{\alpha_i}(V_k^i)|>n$.  Combining this with (\ref{UPPER}), we have
\[ |( \sum_{\ell = 1}^n a_\ell\hat\omega_{\alpha_\ell})(V_k^i) |\ge  |a_i||\hat \omega_{\alpha_i}(V_k^i)| - \sum_{j\not= i}| a_j\hat\omega_{\alpha_j}(V_k^i)|> |a_i| n-\sum_{j\not = i} |a_j| c_{i,j},\]
whenever $k>k_0$.

Recalling that we chose $\cT_k^i$ so that for all $k$ we have $|\partial V_k^i|= |\cT_k^i|+|\cT_0^i| = 10g-10$, it follows that \begin{equation}\label{contradiction} \|A\|_\infty \ge \frac{|a_i| n - \sum_{j\not = i} |a_j| c_{i,j}}{10g-10} = \frac{|a_i|}{10g-10}n - c_i.\end{equation}
Since $n$ was arbitrary, and $\|A\|_\infty < \infty$, this means that $a_i = 0$. Now, since $i$ was arbitrary, it follows that $a_i = 0$ for all $i = 1, ..., n$.  This establishes linear independence of the set $\{[\hat\omega_\alpha]: \alpha\in \Lambda\}$ and proves the theorem.  

\end{proof}

Here we give an example of a collection that satisfies the hypotheses of Theorem \ref{mainthm1}.

\begin{cor}\label{singleboundary}
Let $S$ be a closed, orientable surface.  There is an injective map \[\Psi:  \cEL(S) \to \Hb^3_b(\pi_1(S);\bR)\] whose image is a linearly independent set.  
\end{cor}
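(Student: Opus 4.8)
The plan is to realize every ending lamination as a geometrically infinite end invariant and then to feed the resulting bounded fundamental classes into Theorem~\ref{mainthm1}.

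First I would fix, once and for all, a basepoint $Y_0\in\teich(S)$, and for each $\lambda\in\cEL(S)$ invoke the classical existence theory for geometrically infinite (singly degenerate) surface groups (see for instance \cite{thurston:notes}, \cite{bon}, \cite{canary:ends}), together with the Ending Lamination Theorem, to produce a marked hyperbolic manifold $(f_\lambda,N_\lambda)\in\Hyp_0(S\times\bR)$, without parabolics, whose end invariants are $\nu(N_\lambda)=(\lambda,Y_0)$.  Since $\lambda$ is minimal and filling and $Y_0$ is an interior point of Teichm\"uller space, no accidental parabolics occur, so $N_\lambda$ has exactly one geometrically infinite end, and its invariant is $\lambda$.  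The conjugacy class of $\rho_\lambda:=(f_\lambda)_*: \pi_1(S)\to\G$ is determined by the pair $(\lambda,Y_0)$ by the Ending Lamination Theorem, so I would set $\Psi(\lambda):=[\hat\omega_{\rho_\lambda}]\in\Hb^3_b(\pi_1(S);\bR)$; this depends on the auxiliary choice of $Y_0$, but harmlessly.

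Next I would check that the image is linearly independent.  Given distinct $\lambda_1,\dots,\lambda_n\in\cEL(S)$, the geometrically infinite end invariants of $N_{\lambda_1},\dots,N_{\lambda_n}$ are the pairwise distinct laminations $\lambda_1,\dots,\lambda_n$, so $\{(f_{\lambda_\ell},N_{\lambda_\ell})\}_{\ell=1}^{n}$ satisfies the hypotheses of Theorem~\ref{mainthm1} and $\{\Psi(\lambda_1),\dots,\Psi(\lambda_n)\}$ is linearly independent.  Every finite subfamily of $\{\Psi(\lambda)\}_{\lambda\in\cEL(S)}$ arises this way, so the image of $\Psi$ is a linearly independent set.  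For injectivity, the case $n=1$ gives $\Psi(\lambda)\not=0$ for all $\lambda$ (this also follows from Soma's equality $\|[\hat\omega_{\rho_\lambda}]\|_\infty=v_3>0$), and if $\lambda_1\not=\lambda_2$ with $\Psi(\lambda_1)=\Psi(\lambda_2)$ then $[\hat\omega_{\rho_{\lambda_1}}]-[\hat\omega_{\rho_{\lambda_2}}]=0$ is a nontrivial relation among the classes attached to two distinct laminations, which the proof of Theorem~\ref{mainthm1} (applied with $n=2$) forbids; hence $\Psi$ is injective.

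The only real input beyond Theorem~\ref{mainthm1} is the existence, for each $\lambda$, of a hyperbolic structure on $S\times\bR$ whose geometrically infinite end has ending lamination \emph{exactly} $\lambda$ --- so the point requiring care is not merely producing a degenerate end but identifying its invariant, which is where the Ending Lamination Theorem (and the standard realizability facts underlying the construction of singly degenerate surface groups) does the work.  Everything else is formal bookkeeping with linear independence.
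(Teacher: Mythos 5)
Your proof is correct and follows essentially the same route as the paper: fix a basepoint in Teichm\"uller space, realize each $\lambda\in\cEL(S)$ as the ending lamination of a singly degenerate structure on $S\times\bR$ (the paper cites Thurston's Double Limit Theorem for this existence step, where you appeal to the classical degeneracy theory and the Ending Lamination Theorem), and then apply Theorem~\ref{mainthm1} to get linear independence, from which injectivity is immediate. The extra remarks on absence of accidental parabolics and the alternate injectivity argument via Soma's norm computation are fine but not needed beyond what linear independence already gives.
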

\begin{proof}
Fix a point $X\in \teich (S)$.  By Thurston's Double Limit Theorem \cite{thurston:double}, for every $\lambda\in \cEL(S)$ there is a marked hyperbolic 3-manifold $(f_\lambda, N_\lambda)\in \Hyp_0(S\times \bR)$ with end invariants $(X,\lambda)$. Define $\Psi(\lambda)=[\hat\omega_\lambda] $.  The set of marked manifolds $\{(f_\lambda,N_\lambda): \lambda\in \cEL(S)\}$ satisfies the hypotheses of Theorem \ref{mainthm1}, so linear independence follows.  Injectivity follows from linear independence.  
\end{proof}

Let $\g\le \G$ be a non-elementary, finitely generated Kleinian group that contains no parabolic or elliptic elements such that $M_\g = \bH^3/\g$ has infinite volume.  We will again use techniques and tools from the proof of the Ending Lamination Theorem for parabolic-free hyperbolic manifolds with finitely generated fundamental group to obtain immediate generalizations of our results.  The key is to use Tameness to find a geometrically infinite end $E_S$ diffeomorphic to $S\times[0,\infty)$ and described by a model manifold, so that $S\to S\times [0,\infty)$ is incompressible, even if $S\to M_\g$ is compressible.    We then find a 3-chain with small boundary and large volume when straightened in one manifold but which has small straightened volume with respect to a second metric with a different end invariant. This is the same strategy as in the proof of Theorem \ref{mainthm1}.  Recall that in \S\ref{infiniteends}, we defined $\cEL(S,M_\g)$ to be the collection of equivalence classes of minimal, filling Masur domain laminations up to the action of $\Mod_0(S,M_\g)$.  If $S$ is incompressible, $\cEL(S,M_\g) = \cEL(S)$.  %A feature of hyperbolic 3-manifolds with compressible boundary is that two manifolds $N_0$ and $N_1$ may be homotopy equivalent, but not homeomorphic.
  
We will consider collections of finitely generated Kleinian groups whose geometrically infinite ends are sufficiently different.
%  More precisely, fix $\g\le \G$ which is non-elementary, finitely generated, Kleinian, parabolic free, and such that $M_\g$ has infinite volume.  
Suppose $J$ is an index set and $\{(f_i,N_i): i\in J\}\subset \Hyp_0(M_\g)$.  For each $i\in J$,  $f_i:M_\g \to N_i$ induces a marking \[{f_i}_*: \g\to\pi_1(N_i)=\g_i\le \G.\] Suppose $S_i\in \cE(\g_i)$ is a geometrically infinite end of $N_i$.  The inclusion $S_i\to N_i$ induces a homomorphism $\pi_1(S_i) \to \g_i$ with image called $\g_i(S_i)$; let $\g_j(S_i)$ denote the subgroup of $\g_j$ given by $ {{f_j}_*\circ {f_i}_*\inverse}(\g_i(S_i))$.  The surface $S_i$ lifts homeomorphically to the total space of the cover $M_{\g_i(S_i)} \to N_i$ and defines a geometrically infinite end with end invariant $\nu(S_i)$. By the Covering Theorem \cite{canary:covering}, it has no \emph{new} geometrically infinite ends.  That is, if $\lambda$ is an end invariant for a geometrically infinite end $\tilde{S}\in \cE(\g_i(S_i))$, then $\lambda$ is naturally an end invariant for $\pi(\tilde{S})\in \cE(\g_i)$. Moreover, the covering map induces a bijection $\cEL({S}_i, M_{\g_i(S_i)})\to \cEL(S_i,N_i)$.  We will consider collections of marked manifolds $\{(f_i,N_i): i \in J\} \subset \Hyp_0(M_\g)$ satisfying the following property:
\begin{enumerate}[(*)]
\item For each $i\in J$, there is a geometrically infinite end $S_i\in \cE(N_i)$ so that $\nu(S_i)\in \cEL(S_i, N_i)$ and $\nu(S_i)$ is different from every end invariant of $M_{\g_j(S_i)}$, as long as $i\not=j \in J$.

\end{enumerate}

In the following corollary, we lift to the covering spaces of  geometrically infinite ends and distinguish bounded classes in the cover.  The discussion would be slightly less technical and slightly less general if we chose not to lift to covers, but instead worked just in the neighborhoods of geometrically infinite ends.  Lifting to covers allows us to distinguish between the same ending laminations that appear on homotopically distinct boundary components of two manifolds.  
\begin{cor}\label{general}
Suppose $\g\in \G$ is finitely generated, Kleinian, torsion and parabolic free.  If $\{(f_i, N_i):{i\in J}\} \subset \Hyp_0(M_\g)$ is some collection of marked hyperbolic manifolds satisfying (*), then $\{[\hat\omega_i]: {i \in J}\} \subset \Hb_b^3(\g;\bR)$ is a linearly independent set.
\end{cor}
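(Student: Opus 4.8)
The plan is to lift to the covers associated with the geometrically infinite ends and reduce to Theorem \ref{mainthm1}. As in that proof it suffices to fix a finite subset $\{1,\dots,n\}\subset J$ and, assuming $\sum_{\ell=1}^{n}a_\ell[\hat\omega_\ell]=0$, to show $a_i=0$ for an arbitrary $i$. I would set $\g(S_i)={f_i}_*\inverse(\g_i(S_i))\le\g$, a finitely generated torsion-free subgroup on which each marking ${f_\ell}_*$ restricts to a discrete, faithful, parabolic-free representation $\psi_{i,\ell}:\g(S_i)\to\G$ with image $\g_\ell(S_i)$. Restriction of bounded cochains from $\g$ to $\g(S_i)$ is linear and does not increase $\|\cdot\|_\infty$, and --- because the covering projection $M_{\g_\ell(S_i)}\to N_\ell$ is a local isometry, hence carries the volume cocycle to the volume cocycle and commutes with straightening --- it sends $[\hat\omega_\ell]$ to the bounded fundamental class of $M_{\g_\ell(S_i)}$ marked by $\psi_{i,\ell}$, which I will denote $[\hat\omega_{\psi_{i,\ell}}]$. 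Since $\sum_\ell a_\ell[\hat\omega_{\psi_{i,\ell}}]=0$, we may write $\sum_\ell a_\ell\hat\omega_{\psi_{i,\ell}}=\delta A'$ for some bounded $2$-cochain $A'$ on $\g(S_i)$, and it suffices to deduce $a_i=0$ from this.

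Next I would record the relevant end structure of these covers. Each $M_{\g_\ell(S_i)}$ is tame by the Tameness Theorem, and by the Covering Theorem \cite{canary:covering} it has no new geometrically infinite ends. The cover $M_{\g_i(S_i)}$ contains the homeomorphic lift of the end neighborhood $E_{S_i}\cong S_i\times[0,\infty)$, a geometrically infinite end whose invariant is $\lambda_i:=\nu(S_i)\in\cEL(S_i,M_{\g_i(S_i)})$ via the bijection $\cEL(S_i,M_{\g_i(S_i)})\to\cEL(S_i,N_i)$; meanwhile hypothesis $(*)$ says precisely that $\lambda_i$ is not an end invariant of $M_{\g_j(S_i)}$ for any $j\ne i$.

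With this in place I would run the argument of Theorem \ref{mainthm1} with $M_{\g_i(S_i)}$ in the role of the distinguished manifold. Applying Construction \ref{chains} inside $E_{S_i}$ and Lemma \ref{lowerbound} produces $3$-chains $V_k^i$ of fixed, bounded boundary complexity with $|\hat\omega_{\psi_{i,i}}(V_k^i)|\to\infty$. For $j\ne i$, since $\lambda_i$ is not an end invariant of $M_{\g_j(S_i)}$, simplicial hyperbolic surfaces adapted to the $\cT_k^i$ cannot leave every compact subset of $M_{\g_j(S_i)}$ --- otherwise the curves of $\cP_k^i$, which converge to $\lambda_i$, would have to converge to an end invariant of $M_{\g_j(S_i)}$ --- so Lemma \ref{uniformupperbound} and Lemma \ref{straighten} bound $|\hat\omega_{\psi_{i,j}}(V_k^i)|$ independently of $k$. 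Evaluating $\delta A'=\sum_\ell a_\ell\hat\omega_{\psi_{i,\ell}}$ on $V_k^i$ and estimating exactly as in the final inequalities of the proof of Theorem \ref{mainthm1} yields, for every $n$, a bound $\|A'\|_\infty\ge c\,|a_i|\,n-c_i$ with $c>0$ and $c_i$ independent of $n$; hence $a_i=0$. Since $i$ and the finite subset of $J$ were arbitrary, $\{[\hat\omega_i]: i\in J\}$ is linearly independent.

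The main obstacle is not this bookkeeping but verifying that the machinery of \S\ref{below} and \S\ref{sectionabove} is available in $M_{\g_i(S_i)}$, whose geometrically infinite end $E_{S_i}$ may be \emph{compressible} in $M_{\g_i(S_i)}$ (the handlebody phenomenon). One must establish Construction \ref{chains}, Lemma \ref{lowerbound}, and in particular the existence and uniformly bounded geometry of extended split level surfaces, for a geometrically infinite end modeled on $S_i\times[0,\infty)$ whose inclusion into the ambient manifold need not be $\pi_1$-injective; here one invokes the model manifold associated to a Masur domain ending lamination and the non-realizability input behind Theorem \ref{NS}, using that $S_i\hookrightarrow E_{S_i}$ is incompressible even when $S_i\hookrightarrow M_{\g_i(S_i)}$ is not. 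Granting this, all volume estimates of \S\ref{below} and \S\ref{sectionabove} transfer verbatim and the remainder is formal.
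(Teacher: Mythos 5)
Your proposal follows essentially the same route as the paper's own proof: lift to the cover $M_{\g_i(S_i)}$ corresponding to the geometrically infinite end, use the restriction map $\Hb^3_b(\g)\to\Hb^3_b(\g(S_i))$, build chains $V_k^i$ in that cover via Construction \ref{chains} and Lemma \ref{lowerbound}, bound $|\hat\omega_{\psi_{i,j}}(V_k^i)|$ for $j\ne i$ using realizability of $\nu(S_i)$ (Theorem \ref{NS}) together with Lemmas \ref{straighten} and \ref{uniformupperbound}, and conclude $a_i=0$ as in Theorem \ref{mainthm1}. You also correctly identify the key technical point--that when $S_i$ is compressible one must invoke the model manifold machinery for an end whose inclusion is not $\pi_1$-injective (while $S_i\hookrightarrow E_{S_i}$ still is)--which is exactly what the paper does by citing the model construction for such ends; the only cosmetic difference is that the paper treats the incompressible case separately by invoking Theorem \ref{mainthm1} as a black box, whereas you run the unified argument throughout.
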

\begin{proof}
Let $S_i\in \cE(\g_i)$ be the geometrically infinite end of $N_i$ from our hypotheses.  Assume that $S_i$ is incompressible.   That is, the inclusion $S_i\to N_i$ induces an injection $\pi_1(S_i) \to \g_i$ with image $\g_i(S_i)$.  Then $\g_i(S_i)$ is a Kleinian surface group, and the total space of the  covering $M_{\g_i(S_i)} \to N_i$ has a geometrically infinite end with end invariant $\nu(S_i)$.  
By the Covering Theorem, if $\g_i(S_i)$ has infinite index in $\g_i$, then $M_{\g_i(S_i)}$ has exactly one geometrically infinite end.   If $\g_i(S_i)$ has finite index in $\g_i$, then $\g_i(S_i)=\g_i$.  By assumption, for all $j\in J$ with $i\not=j$, we have that $\nu(S_i)$ is different from the end invariants of $M_{\g_j(S_i)}$, so we can apply Theorem \ref{mainthm1}.   The inclusion $\g_i(S_i)\to \g_i\cong \g$ induces a linear mapping $\Hb^3_b(\g)\to \Hb_b^3(\pi_1(S_i))$.    The images of our classes differ in $\Hb_b^3(\pi_1(S_i))$, so they must differ in $\Hb_b^3(\g)$ as well.  This completes the proof of the corollary in the case that $S_i\to N_i$ is incompressible.
Assume now that $S_i\to N_i$ is compressible.  First, we will lift to the cover corresponding to $\g_i(S_i)$.   To ease some of the burden of notation, we do not distinguish between $\hat\omega_j$ and its image in $\Cb_b^3(M_{\g_j(S_i)};\bR)$ under the map induced by the covering $M_{\g_j(S_i)}\to N_j$. 

In this paragraph, we build a sequence of chains $V_k^i$ such that $|\hat\omega_i(V_k^i)|\to \infty$ as $k\to \infty$, which is the first estimate that we will need to prove the corollary.   There is a neighborhood $E_{S_i}\subset M_{\g_i(S_i)}$ of $S_i$ whose geometry is described by a model manifold, just as in the case of surface groups (see \cite{brock-canary-minsky:ELTII} \S1.3, and \cite{brock-canary-minsky:ELTIII}). Here, we describe how to build the model.  Find a pleated surface $f:S_i\to E_{S_i}$ in the homotopy class of the inclusion $S_i\to E_{S_i}$.  The induced metric on this pleated surface is hyperbolic, and so it defines a point in Teichm\"uler space $X_i\in\teich(S_i)$.  As in \S\ref{invnts}, $X_i$ determines a complete marking $\mu_{X_i}$, and we build a complete hierarchy $H_i$ with $\mathbf{I}(H_i) = \mu_{X_i}$ and $\mathbf{T}(H_i)=\nu(S_i)$.   Fix a resolution by slices with associated markings $\mu_{X_i}= \mu_{0}^i, \mu_{1}^i, ...$, and build a model manifold $M_{H_i}$ from these data.   A submanifold of the model admits a $K(S_i)$-bi-Lipschitz homeomorphism onto a perhaps smaller neighborhood $E_{S_i}'\subset E_{S_i}$.  Now we apply Construction \ref{chains} to obtain a sequence of $3$-chains $\{V_k^i\}_{k=1}^\infty$.  By Lemma \ref{lowerbound},  the function $k\mapsto  |\hat\omega_i(V^i_k)|$ is proper and increasing.  We have established one of the two main estimates needed to prove the corollary in the case that $S_i$ is compressible.  

Now we must argue that for $j\not=i$, the chains straightened with the $j$-metric have volume uniformly bounded from above.  An application of Lemma \ref{uniformupperbound} will complete the proof of the corollary.  
To this end, let ${f_i}_*\inverse (\g_i(S_i)) = \g(S_i)$ and $\tilde{f}_j : M_{\g(S_i)} \to M_{\g_j(S_i)}$ be a lift of $f_j$ to the corresponding covers.  Let $g_i: M_{\g_i(S_i)}\to M_{\g(S_i)}$ be a smooth homotopy inverse for $\tilde{f}_i$.  We need to argue that for $i\not= j$, there is a $C_j>0$ such that $|\hat\omega_j((\tilde{f}_j\circ g_i)(V^i_k))|<C_j$.  We claim that if $\alpha^i_k\subset S_i$ are simple curves satisfying $\alpha^i_k\to \nu(S_i)$ as $k\to \infty$ then the geodesic representatives of $(\tilde{f}_j\circ g_i )(\alpha^i_k)$ in $M_{\g_j(S_i)}$ must stay in a compact set for all $k$. Since $\nu(S_i)$ is not an end invariant of $M_{\g_j(S_i)}$, by Theorem \ref{NS} $\nu(S_i)$ is realized in $M_{\g_j(S_i)}$.  Thus there is a compact set $K_j\subset M_{\g_j(S_i)}$ so that $K_j\supset ({\tilde{f}_j\circ g_i})(\alpha^i_k)^*$ for every $k$, which is exactly what we wanted. By construction, the boundary of our three chains $V^i_k$ define triangulations $\cT^i_0$ and $\cT^i_k$ of $S_i$.  Let $G^{i,j}_k: S_i\to M_{\g_j(S_i)}$ be simplicial hyperbolic surfaces adapted to $\cT^i_k$ homotopic to ${\tilde{f}_j\circ g_i}:S_i\to M_{\g_j(S_i)}$. Applying the Bounded Diameter Lemma to $(G^{i,j}_k,\cT^i_k)$, we have a compact set $K_j'$ such that $\im G_k^{i,j}\subset K_j'$ for all $k$.  Apply Lemma \ref{straighten} and Lemma \ref{uniformupperbound} to obtain an upper bound analogous to (\ref{UPPER}).  Combining this paragraph with the previous we have that any co-boundary $B$ with $\delta B = \hat\omega_i-\hat\omega_j$ must satisfy an inequality analogous to Inequality (\ref{contradiction}), and so $B$ is not a bounded co-boundary.  That is, $[\hat\omega_i]\not= [\hat\omega_j]$ if $i\not=j$.  Linear independence follows as it did in Theorem \ref{mainthm1}.
\end{proof}

\section{Infinite ends are cohomologically separated}\label{main2}

We show that we may build triangulations of the manifolds $W_k$ from \S\ref{below} that are efficient in the sense that the average volume of a straight simplex is bounded below for each $k$.   Brock supplies us with such efficient triangulations in \cite{brock:wp} that we augment with a kind of `wrapping trick' in \S\ref{telescopesoftori}.
Combining his construction with the hierarchy machinery of Masur-Minksy \cite{masur-minsky:complex2} and our work in \S\ref{below}, we bound ${\|[\hat\omega_1]-[\hat\omega_2]\|_\infty}$ from below by a constant that only depends on the topology of $S$. 
Our plan is to relate the size of a hierarchy path to progress in $\mathbf{P}(S)$, the pants graph for $S$.  Curves which appear in a geodesic in a hierarchy path come in two flavors---curves which appear on geodesics in the hierarchy $H$ whose domain is a four-holed sphere or a one-holed torus contribute substantial volume, while others do not.    We collect a number of useful results, below.  %Brock realizes elementary moves on markings as interpolation between homotopic simplicial hyperbolic surfaces in a convex hyperbolic 3-manifold.  This interpolation yields a triangulation of our $W_k$ with a controlled number of simplices.  

\subsection{Counting in a hierarchy}
Let $\rho: \pi_1(S) \to \G$ be a marked Kleinian surface group with at least one geometrically infinite end and not parabolic cusps.  Let $\nu=(\nu^-,\nu^+)$ be the end invariants for $M_\rho$. Construct a hierarchy $H_{\nu}$, and fix a resolution by slices with corresponding markings $\{\mu_i\}_{i\in J}$ where $J\subset\bZ$ is a (bi)-infinite interval; denote $\base(\mu_i) = \cP_i$.  

Suppose $i_0<i_1$ and consider the markings $\mu_0$ and $\mu_1$ coming from the $i_0$ and $i_1$ slices of $H_\nu$ respectively.  The hierarchy $H_\nu$ naturally restricts to a hierarchy $H$ with $\mathbf{I}(H)=\mu_0$ and $\mathbf{T}(H)=\mu_1$ and so that the resolution of $H_\nu$ by slices $\{\mu_i\}_{i\in J}$ restricts to a resolution with slices $\{\mu_i : {i_0\le i \le i_1}\}$ of $H$ (see \S3.3 or \cite{durham:hierarchies} Appendix 8, Lemma 8.3.1, Lemma 8.3.2).  If $h = v_0, ... ,v_n$, then $|h| = n$ and the \emph{size} of a hierarchy path $H$ is defined by $|H|= \sum_{h\in H} |h|$.   
Recall that if $h\in H$, then the \emph{domain} $D(h)$ of $h$ is the subsurface on which $h$ is supported, and $\xi(D(h)) = 3g+n$ where $g$ is the genus of $D(h)$ and $n$ is the number of its boundary components or punctures.  The \emph{non-annular size} of a hierarchy $H$ is \[\|H\| = \sum_{\substack{h\in H\\ \xi(D(h))\not=2}}|h|\]
\begin{theorem}[\cite{masur-minsky:complex2}, Theorem 6.10]\label{counting}
There are constants $M_2$ and $M_3$ depending only on $S$ such that if $\mu$ and $\mu'$ are complete markings of $S$ and $H$ is a hierarchy with initial and terminal markings $\mu$ and $\mu'$, then \[M_2\inverse d_\mathbf{P}(\base(\mu),\base(\mu'))-M_3\le \|H\| \le M_2d_\mathbf{P}(\base(\mu),\base(\mu')). \]
\end{theorem}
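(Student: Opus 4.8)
The statement is a theorem of Masur--Minsky, and the plan is to reproduce their argument, which splits along the two inequalities and rests on the subsurface projection machinery of \cite{masur-minsky:complex2}.

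\emph{The lower bound on $\|H\|$ (equivalently, the upper bound on $d_\mathbf{P}$).} First I would fix a resolution of $H$ by slices $\tau_0\prec\cdots\prec\tau_N$ and pass to the associated base curves $\cP_{\tau_0},\dots,\cP_{\tau_N}$. Consecutive base curves either coincide or differ by an elementary move, so after deleting repetitions they trace an edge path in $\mathbf{P}(S)$ from $\base(\mu)$ to $\base(\mu')$; hence $d_\mathbf{P}(\base(\mu),\base(\mu'))$ is at most the number of indices $k$ with $\cP_{\tau_k}\ne\cP_{\tau_{k+1}}$. In a resolution, advancing along an annular geodesic is a twist move and leaves the base curves fixed, whereas a base-changing (flip) move advances along an edge of a geodesic $h\in H$ with $\xi(D(h))=4$; consequently the number of base-changing moves is bounded above, up to universal additive and multiplicative constants accounting for the re-cleaning of markings, by $\sum_{h\in H,\ \xi(D(h))=4}|h|\le\|H\|$. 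This gives $d_\mathbf{P}(\base(\mu),\base(\mu'))\le M_2(\|H\|+M_3)$, i.e. the left inequality. (See \cite{masur-minsky:complex2}, \S5, for the structure of resolutions.)

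\emph{The upper bound on $\|H\|$.} This is the efficiency direction, and I would assemble it from two ingredients. First, $\|H\|$ is coarsely a thresholded sum of subsurface projection distances: each $h\in H$ is, by definition, a geodesic in $\cC(D(h))$ between $\pi_{D(h)}(\mathbf{I}(h))$ and $\pi_{D(h)}(\mathbf{T}(h))$, and the nesting conditions in a hierarchy force these markings to project within a universal distance of $\mu$ and $\mu'$, so $|h|=d_{D(h)}(\mathbf{I}(h),\mathbf{T}(h))$ agrees with $d_{D(h)}(\mu,\mu')$ up to an additive constant; moreover, by the ``only large domains appear'' estimates of \cite{masur-minsky:complex2}, there is a threshold $K$ so that the set of domains of geodesics in $H$ coincides, up to bounded ambiguity, with the set of $Y\subset S$ having $\xi(Y)\ne 2$ and $d_Y(\mu,\mu')\ge K$. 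Absorbing the additive errors into the threshold, $\|H\|$ becomes comparable, with purely multiplicative constants, to $\sum_{Y:\ \xi(Y)\ne 2,\ d_Y(\mu,\mu')\ge K} d_Y(\mu,\mu')$. Second, this sum is at most a constant times $d_\mathbf{P}(\base(\mu),\base(\mu'))$: along a $\mathbf{P}(S)$-geodesic $\base(\mu)=Q_0,\dots,Q_n=\base(\mu')$ each $d_Y(Q_i,Q_{i+1})$ is universally bounded (coarse Lipschitz-ness of $\pi_Y$ together with the fact that $Q_i\cup Q_{i+1}$ fills a subsurface of complexity at most $4$), and then the Bounded Geodesic Image Theorem (\cite{masur-minsky:complex2}, Theorem 3.1), together with the ensuing antichain/time-ordering estimate for the subsurfaces carrying large projections, shows that the total contribution over all $Y$ is $O_K(n)$. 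Combining the two ingredients yields $\|H\|\le M_2\, d_\mathbf{P}(\base(\mu),\base(\mu'))$.

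\emph{Main obstacle.} The genuinely hard input is the second ingredient above---the ``easy'' direction of the distance formula---where the Bounded Geodesic Image Theorem and the combinatorics of subsurface projections do the real work of converting the absence of back-tracking in $\mathbf{P}(S)$ into a bound on $\sum_Y d_Y$; by contrast, the resolution argument and the identification of $|h|$ with a subsurface projection distance are formal consequences of the definitions of slices, resolutions, and tight geodesics.
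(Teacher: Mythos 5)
The paper does not prove this statement; it is quoted verbatim from \cite{masur-minsky:complex2} (Theorem~6.10), so there is no in-paper argument to compare against. Your proposal is a faithful sketch of the Masur--Minsky proof and hits the right structure: the lower bound on $\|H\|$ via a resolution by slices, where the length of the induced path in $\mathbf{P}(S)$ is controlled by the number of flip moves, and the upper bound via the hierarchy-to-projection correspondence (Large Link Lemma and the comparison $|h|\asymp d_{D(h)}(\mu,\mu')$) followed by the ``easy'' direction of the distance formula.

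Two small calibrations. First, the flip moves in a resolution correspond to edges of geodesics supported on $\xi=4$ domains, so what the resolution argument actually bounds is $d_\mathbf{P}$ against $\|H\|_4$; the passage from $\|H\|_4$ to $\|H\|$ is the content of Minsky's Proposition~9.7, which the paper records separately as Theorem~\ref{four}. Since $\|H\|_4\le\|H\|$ trivially, this does not affect the inequality you want, but it is worth being aware that the two quantities are distinct objects with a separate comparability estimate. Second, in your justification that $d_Y(Q_i,Q_{i+1})$ is uniformly bounded, it is not $Q_i\cup Q_{i+1}$ that fills a complexity-$4$ subsurface (together they fill all of $S$); rather the symmetric difference of the two pants decompositions is a pair of curves filling a $\xi=4$ subsurface, and any shared curve crossing $Y$ already forces the two projections to be within distance $\le 4$ of one another. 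The ``antichain/time-ordering'' step converting BGIT into a total bound $O_K(n)$ is, as you say, the real content of the easy direction of the distance formula; as a reconstruction of the cited theorem's proof your outline is sound.
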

\vspace{.5cm}

Define the \emph{$4$-size} of $H$ by \[\|H\|_4 = \sum_{\substack{h\in H \\ \xi(D(h))=4}} |h|.\]
Minsky shows that that non-annular size of $H$ and the $4$-size of $H$ are comparable; the following is an application of Proposition 9.7 of \cite{minsky:ELTI} and some elementary observations about the subsurfaces that appear as domains in a finite hierarchy that share some collection of boundary components.  

\begin{theorem}[\cite{minsky:ELTI}, Proposition 9.7]\label{four}
There are constants $M_4$ and $M_5$ depending only on $S$ such that a complete finite hierarchy $H$ satisfies \[M_4\inverse \|H\| -M_5 \le \|H\|_4\le \|H\|. \]
\end{theorem}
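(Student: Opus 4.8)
The right-hand inequality is immediate: every geodesic $h\in H$ with $\xi(D(h))=4$ in particular satisfies $\xi(D(h))\neq 2$, so $\|H\|_4$ is a sub-sum of the sum defining $\|H\|$. For the left-hand inequality the plan is to control the ``higher complexity'' part of $\|H\|$ by the $4$-size. For $4\le m\le \xi(S)$ write $T_m=\sum_{h\in H,\ \xi(D(h))=m}|h|$, so that $\|H\|=\sum_{m=4}^{\xi(S)}T_m$ and $\|H\|_4=T_4$; it then suffices to produce constants depending only on $S$ with $\sum_{m\ge 5}T_m\le (\text{const})\,T_4+(\text{const})$.

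The main input is Proposition~9.7 of \cite{minsky:ELTI}. Applied to a geodesic $g\in H$ with $\xi(D(g))\ge 5$ --- if necessary after iterating it $\xi(S)-4$ times so as to descend all the way from $\xi(D(g))$ down to complexity $4$, absorbing the multiplicative and additive constants produced along the way --- it yields a constant $c=c(S)$ with
\[ |g|\ \le\ c\,\Big(\,1+\#\{\,h\in H:\ D(h)\ \text{is a component domain of a simplex of }g\ \text{with}\ \xi(D(h))=4\,\}\,\Big), \]
i.e.\ the length of a geodesic supported on a surface of complexity at least five is controlled, up to uniform constants, by the number of four-holed-sphere and one-holed-torus component domains appearing along it which support geodesics of the hierarchy.

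Next I would sum this estimate over all $g$ with $\xi(D(g))=m$ and over $m\ge 5$, and reorganize the resulting double sum over pairs $(g,h)$ as a sum over the $4$-geodesics $h\in H$, each weighted by $\#\{g\in H:\ \xi(D(g))\ge 5,\ D(h)\text{ a component domain of a simplex of }g\}$. This is where the elementary observations about domains enter. Fixing $W=D(h)$ with $\xi(W)=4$: every $g$ contributing to the weight of $h$ has $D(g)\supsetneq W$, and the subsurfaces containing $W$ that occur as domains of geodesics of a single hierarchy are totally ordered by inclusion (two of them cannot overlap essentially, since the subordinacy/projection structure of a hierarchy forces their boundaries to be compatible with the common sub-domain $W$ they share). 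Along such a nested chain the complexity $\xi$ strictly increases and is bounded above by $\xi(S)$, so the chain, and hence the weight attached to $h$, has at most $\xi(S)-4$ terms. An identical argument bounds $\#\{g\in H:\ \xi(D(g))\ge 5\}$ by a constant multiple of $T_4$ plus a constant, which disposes of the additive ``$+1$'' terms above. Collecting constants and unwinding the definitions gives the required $M_4$ and $M_5$.

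The step I expect to be the real obstacle is precisely the ``elementary observation'' above: one must verify honestly that the multiplicities that appear on unpacking Proposition~9.7 and re-summing --- how many geodesics of $H$ can share a given complexity-$4$ component domain, and how long the nesting chains one extracts can be --- are bounded by a function of $\xi(S)$ alone, with no hidden dependence on $H$. This requires some care with the bookkeeping of boundary curves of nested (and of time-separated but possibly overlapping) domains in a finite hierarchy, together with a parallel argument bounding the number of high-complexity geodesics; the remaining manipulations (iterating Proposition~9.7, re-indexing the double sum, collecting constants) are routine.
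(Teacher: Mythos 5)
Your overall strategy --- take the trivial upper bound, iterate Proposition~9.7 of Minsky down from $\xi(D(g))$ to complexity $4$, then re-sum as a double sum over pairs $(g,h)$ and bound the multiplicity --- is a reasonable reading of the paper's terse attribution ``an application of Proposition~9.7~\ldots{} and some elementary observations about the subsurfaces that appear as domains in a finite hierarchy that share some collection of boundary components.'' But the specific combinatorial claim you lean on to bound the multiplicity is, I believe, false, and it is exactly the step you flag as the ``real obstacle.''

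You assert that ``the subsurfaces containing $W$ that occur as domains of geodesics of a single hierarchy are totally ordered by inclusion,'' on the grounds that two such domains ``cannot overlap essentially.'' This does not follow from the structure of hierarchies. Lemma~4.18 of Masur--Minsky says that two overlapping domains of $H$ are \emph{time-ordered}; it does \emph{not} say they cannot overlap, and in particular it does not say they are nested. Concretely, in a surface of genus $\ge 3$ one can take $W$ to be a one-holed torus with boundary $c$ and let $Y_1, Y_2$ be twice-punctured tori obtained by attaching two distinct, essentially intersecting pairs of pants to $W$ along $c$; then $Y_1$ and $Y_2$ overlap, both contain $W$ with $c$ interior, and nothing in the hierarchy axioms prevents both from supporting geodesics in which $c$ appears in some simplex --- which is exactly the condition for $W$ to be a component domain of each. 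So the nested-chain bound $\le \xi(S)-4$ on the multiplicity does not hold as stated. A correct multiplicity bound requires different bookkeeping, and the paper's phrase ``share some collection of boundary components'' is pointing at that: every $g$ contributing to the weight of $h$ has some simplex containing the interior boundary curves of $W=D(h)$, and one must instead track, via the uniqueness/subordinacy structure (Theorem~4.7 of Masur--Minsky), how many geodesics of $H$ can carry those boundary curves in a simplex --- a count controlled by $\xi(S)$ but \emph{not} because the ambient domains nest. Finally, the parallel assertion that ``an identical argument bounds $\#\{g\in H: \xi(D(g))\ge 5\}$ by a constant multiple of $T_4$ plus a constant'' is not an identical argument at all (it is a count of geodesics, not a weighted sum, and in particular must deal with geodesics of zero or very short length which contribute no complexity-$4$ component domains supporting geodesics of $H$), and it is left entirely to the reader. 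Both of these are genuine gaps; the rest of the plan is routine once they are filled.
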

Let us combine Theorems \ref{counting} and \ref{four} into the following statement.  For a complete finite hierarchy $H$ with $\mathbf{I}(H)=\mu_0$ and $\mathbf{T}(H)=\mu_1$
\begin{equation}\label{useful}
\|H\|_4\ge M_6d_{\mathbf{P}}(\base(\mu_0), \base(\mu_1)) - M_7
\end{equation}
where $M_6 = (M_4M_2)\inverse$ and $M_7 = M_5\inverse M_3+M_5$.  

\subsection{Hierarchies and efficient triangulations}
Given a finite hierarchy $H$, we would like to build a triangulation of $S\times I$ interpolating between $\mathbf{I}(H)$ and $\mathbf{T}(H)$.  First, we will construct a class of triangulations of $S$ that incorporate the twisting information encoded in the initial and terminal markings of $H$; our triangulations $\cT$ adapted to a pants decomposition $\cP$ from earlier sections did not incorporate such data.  Our new triangulations are stable under certain moves, and any resolution of $H$ will yield a collection of such moves that allow us to construct a triangulation of $S\times I$ with the stipulated boundary data.  Moreover, each step forward in the resolution of $H$ will contribute a uniformly bounded number of tetrahedra to the final triangulation.  We will therefore have an upper bound for the 1-norm of our triangulation in terms of $|H|$.  The construction of these triangulations and their properties are due to Brock in \S5 of \cite{brock:wp}.  

Let $Y$ be a pair of pants.  A \emph{standard} triangulation $\cT_Y$ of $Y$ has the following properties:
\begin{enumerate}
\item $\cT_Y$ has two vertices on each boundary component.
\item $\cT_Y$ has two disjoint spanning triangles with no vertices in common, and a vertex on each component of $\partial Y$.
\item The remaining 3 quadrilaterals are diagonally subdivided by an arc that travels ``left to right'' with respect to the inward pointing normal to $\partial Y$ (See Figure \ref{standardpants}).
\end{enumerate}

We construct a \emph{standard triangulation} $\cT$ suited to a pants decomposition $\cP$ by gluing together standard triangulations on pairs of pants as in (1) - (3) below. Note that by an Euler characteristic computation, $|\cT| = 16(g-1)$.
\begin{figure}[h]
\includegraphics[width=3in]{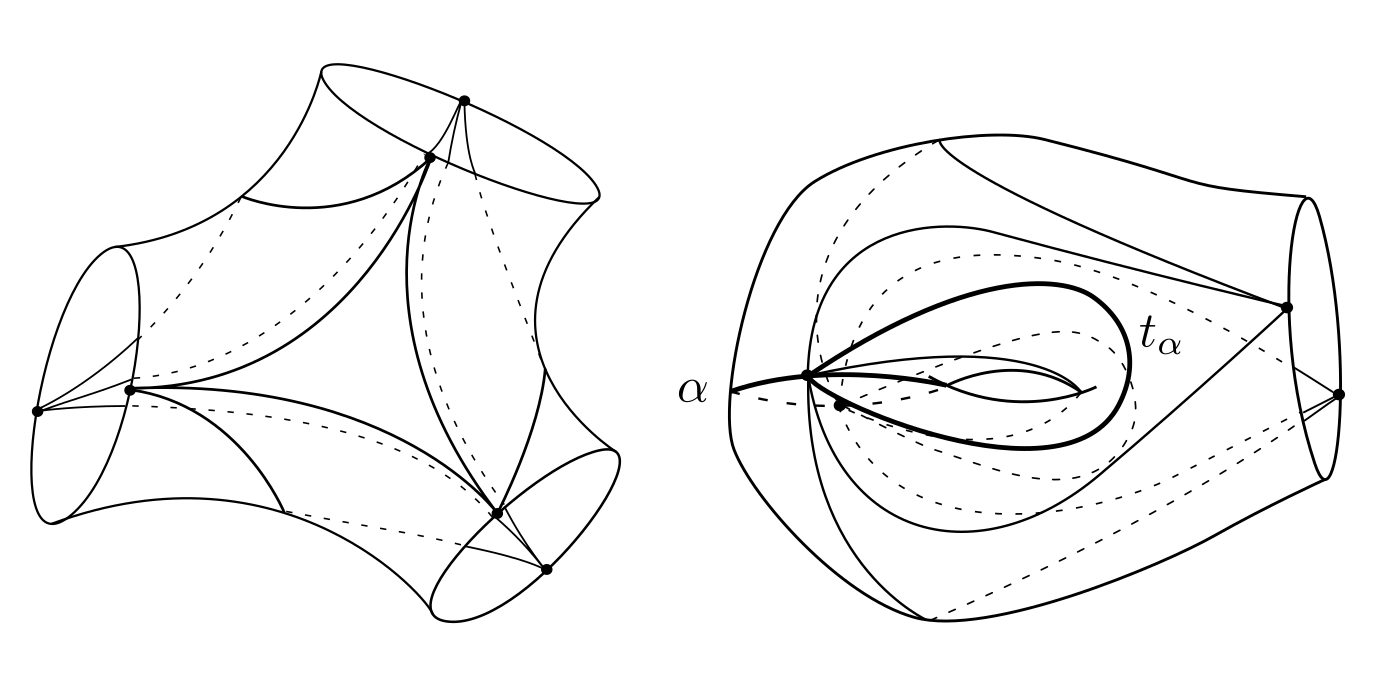}
\caption{On the left is a standard triangulation $\cT_{\cP}$ of a pair of pants $Y$.  On the right is a surface $S_\alpha$ in the case that it is a $1$-holed torus.  The canonical transversal $t_\alpha$ is in bold.  }
\label{standardpants}
\end{figure}
\begin{enumerate}
\item $\cT$ has two vertices $p_\alpha$ and $\bar p_\alpha$ on each component $\alpha$ of $\cP$, and two edges $e_\alpha$ and $\bar e_\alpha$ in the complement of $\alpha\setminus \{p_\alpha, \bar{p}_\alpha\}$.
\item If $Y$ is a complementary open pair of pants in $S\setminus \cP$, the restriction of $\cT$ to $Y$  is a standard triangulation of $Y\cup\partial Y$.
\item If two boundary components $\alpha_1$ and $\alpha_2 \subset \partial Y$ represent the same curve in  $S$, then the edge of each spanning triangle that runs from $\alpha_1$ to $\alpha_2$ forms a closed loop (see Figure \ref{standardpants}).
\end{enumerate}

\begin{figure}[h]
\includegraphics[width=5in]{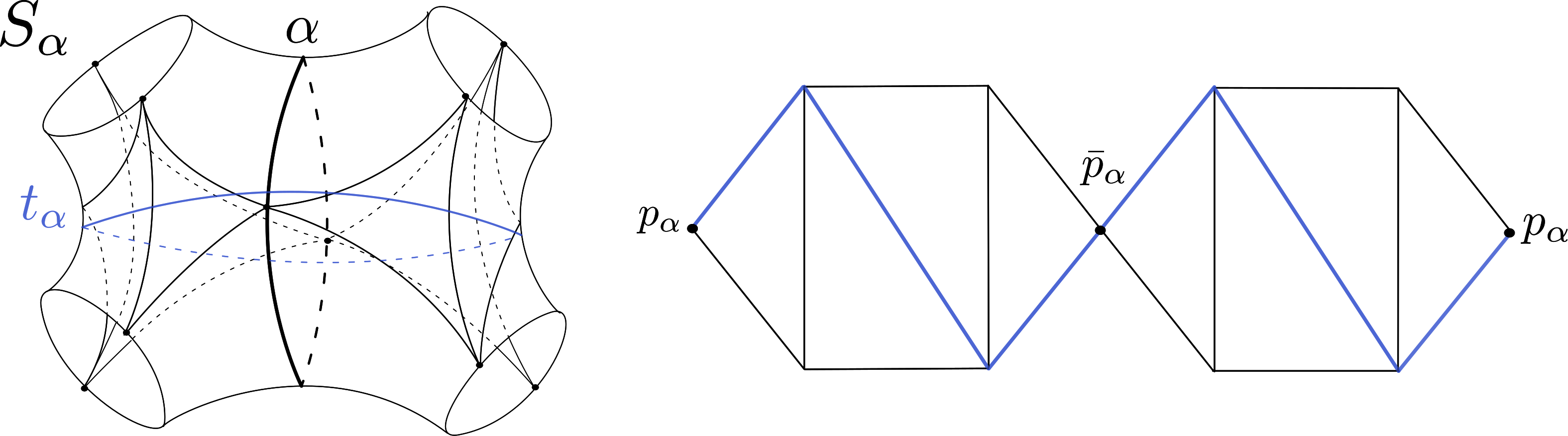}
\caption{Here $S_\alpha$ has genus $0$.  On the left, only the edges of $\cT$ restricted to $S_\alpha$ that contribute to the construction of $t_\alpha$ are drawn, and the homotopy class of $t_\alpha$ is in blue.  On the right, we have the two hexagons $X_l$ and $X_r$ and the concatenation of the blue segments forms the homotopy class of $t_\alpha$. }
\label{transversalcurves}
\end{figure}

We want to outfit our triangulation to include the twisting information of the transversals in the initial and terminal markings of $H$.  Suppose $\mu$ is a complete marking on $S$ with $\base(\mu)=\cP$.  An important property of a standard triangulation $\cP$ suited to $\cT$ is that it defines canonical homotopy classes of transversal curves $t_\alpha \in \cC(S)$ for each $\alpha \in \cP$.  Once we have identified the canonical transversal curve, we can apply Dehn twists $T_\alpha^n(\cT)$ about each pants curve to obtain a triangulation $\cT'$ suited to $\cP$ and outfitted to $\mu$.  The standard triangulation $\cT'$ is \emph{outfitted} to $\mu$ if the projection $\pi_{\alpha}(t_\alpha)$ of the canonical transversals $t_\alpha$ defined by $\cT'$ onto the annular curve graph for $\alpha$  coincides with the transversal $t$ for the pair $(\alpha,t)\in \mu$. We now explain how to identify the homotopy class of the canonical transversal.  Let $S_\alpha$ be the component of $S\setminus (\cP\setminus\{\alpha\})$ containing $\alpha$.  It may be helpful to refer to Figure \ref{standardpants} for $(a)$ and Figure \ref{transversalcurves} for $(b)$.
\begin{enumerate}[(a)]
\item If $S_\alpha$ is a $1$-holed torus, then each spanning triangle for $\cT$ in $S_\alpha$ has one edge with its endpoints on $\alpha$ identified.  These edges are in the same homotopy class, call this homotopy class $t_\alpha$.  
\item If $S_\alpha$ is a $4$-holed sphere, let $Y_r$ and $Y_l$ be the two components of $S_\alpha\setminus \alpha$.  The restriction of $\cT$ to $Y_r$ contains two spanning triangles and one annulus in their complement disjoint from $\alpha$; call the union of these triangles and annulus $X_r$.  The boundary of $X_r$ is a hexagon; we choose a path in the $1$-skeleton of $X_r$ called $t_{\alpha,r}$ that does not contain an arc of $\partial S_\alpha$ and that joins $p_\alpha$ with $\bar{p}_\alpha$.  Construct $t_{\alpha,l}\subset Y_l$ analogously.  The concatenation of the edges of  $t_{\alpha,l}$ and $t_{\alpha,r}$ forms a homotopy class of curves $t_\alpha$, and any other choice of paths in $X_r$ and $X_l$ are in the same homotopy class.
\end{enumerate}

Notice that the geometric intersection number $i(t_\alpha,\alpha) = 1$ if $S_\alpha$ has genus $1$, and $i(t_\alpha, \alpha)=2$ otherwise.  Furthermore, since $t_\alpha\subset S_\alpha$ and $\{t_\alpha, \alpha\}$ fills $S_\alpha$, the collection $\mu' = \{(\alpha, t_\alpha): \alpha\in \cP\}$ defines a clean marking compatible with $\mu$.

Brock considers three types of \emph{moves} on standard triangulations corresponding to elementary moves on clean complete markings.  The Dehn twist move on standard triangulations (\cite{brock:wp}, \textbf{MVI}) corresponds to a twist move on complete clean markings, while the genus $1$ and genus $0$ moves on standard triangulations (\cite{brock:wp}, \textbf{MVII} and \textbf{MVIII} respectively) both correspond to the \emph{flip} elementary moves on clean complete markings, depending on whether the subsurface of complexity $4$ on which the move takes place has genus $1$ or $0$.   For each type of move, there is a \emph{block triangulation} realizing this move in a triangulated model manifold $S\times I$.  Suppose inductively that $\cT$ is a triangulation of $S\times I$ such that the boundary of $\cT$ is triangulated by standard triangulations suited to complete clean markings $\mu$ and $\mu'$.  The block triangulation associated to each move is a recipe for attaching tetrahedra to $\cT$ to obtain a new triangulation $\cT'$ with boundary data stipulated by markings $\mu$ and $\mu''$, where $\mu''$ and $\mu'$ are related by an elementary move.  Every elementary move on complete clean markings is realized by a move on standard triangulations, and every move on standard triangulations is realized by a uniformly bounded number of block triangulations; finally, each block triangulation uses a uniformly bounded number of tetrahedra.   

There may be a large number (unbounded amount) of Dehn twist moves needed to advance from triangulations adapted to pants decompositions $\cP$ to $\cP'$ even if they are adjacent in $\mathbf{P}(S)$. We discuss the Dehn twist moves and their block triangulations in \S\ref{telescopesoftori} in more detail.  In particular, instead of using one block for every Dehn twist move needed to advance from markings $\mu$ and $\mu'$ compatible with adjacent pants decompositions $\cP$ to $\cP'$, we will use {\it just one block}.  The trade off is that our modified block which does the work of $n$ Dehn twist blocks will no longer  \emph{triangulate} the model manifold; it will however be represented by a real $3$-chain with uniformly bounded $1$-norm.

From the proof of Theorem 5.7 in \cite{brock:wp}, we extract the following

\begin{lemma}[\cite{brock:wp}, Theorem 5.7]\label{goodtri}
Let $S$ be a closed surface and let $\mu_0$ and $\mu_1$ be complete markings on $S$.  Let $H$ be a hierarchy of tight geodesics with $\mathbf{I}(H)=\mu_0$ and $\mathbf{T}(H)=\mu_1$, and let $\cT_i$ be a standard triangulation of $S$ suited to $\mu_i$, for $i=0,1$.  Then there is a triangulation $U$ of $S\times I$ and a universal constant $M_8>0$ with the following properties: 
\begin{enumerate}[(i)]
\item $\partial U = \cT_1\times \{1\}-\cT_0\times\{0\}$,
\item $U$ decomposes as a sum $U = U_{DT}+ U_0$, where $U_{DT}$ is the collection of tetrahedra which arise from Dehn twist blocks.
\item $|U_0|\le M_8\|H\|$.
\end{enumerate}

\end{lemma}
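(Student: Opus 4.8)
The plan is to run the construction from the proof of Theorem~5.7 of \cite{brock:wp} while keeping a separate tally of the tetrahedra contributed by Dehn twist blocks. First I would fix a resolution of $H$ by slices $\tau_0 \prec \tau_1 \prec \cdots \prec \tau_N$ (as in \S\ref{subhierarchies}) and recover the associated sequence of complete markings; passing to compatible complete clean markings if necessary, one obtains a hierarchy path $\mu_0 = \hat\mu_0, \hat\mu_1, \dots, \hat\mu_N = \mu_1$ in the marking complex $\cM(S)$ in which each $\hat\mu_k$ and $\hat\mu_{k+1}$ differ by a single elementary move: a twist move or a flip move, the latter supported on a complexity-$4$ component domain of genus $0$ or $1$.

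Next I would build $U$ by stacking block triangulations. Starting from $\cT_0 \times \{0\}$ and proceeding inductively, suppose $S \times [0,s]$ has been triangulated so that its top boundary is a standard triangulation suited to $\hat\mu_k$. The elementary move carrying $\hat\mu_k$ to $\hat\mu_{k+1}$ is realized by a move on standard triangulations (\textbf{MVI}, \textbf{MVII}, or \textbf{MVIII} of \cite{brock:wp}), which is in turn realized by a uniformly bounded number of block triangulations, each attaching a uniformly bounded number of tetrahedra; gluing these on extends the triangulation to $S \times [0,s']$ with top boundary a standard triangulation suited to $\hat\mu_{k+1}$. Iterating over $k$, so that the top of the stack agrees with $\cT_1 \times \{1\}$, produces a triangulation $U$ of $S \times I$ with $\partial U = \cT_1 \times \{1\} - \cT_0 \times \{0\}$, which is (i). Let $U_{DT} \subset U$ be the collection of tetrahedra belonging to block triangulations of Dehn twist moves and set $U_0 = U - U_{DT}$; every tetrahedron of $U_0$ then lies in a block realizing a flip move, which is (ii).

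For (iii) I would count flip moves. Advancing one step along the resolution advances one edge of one geodesic of $H$, and a flip move occurs precisely when that geodesic has complexity $4$; hence the number of flip moves along the hierarchy path is bounded, up to a universal multiplicative constant absorbing the coarseness of this correspondence, by $\sum_{\xi(D(h)) = 4} |h| = \|H\|_4 \le \|H\|$, the last inequality being the trivial inclusion $\{h : \xi(D(h)) = 4\} \subseteq \{h : \xi(D(h)) \ne 2\}$ of index sets. Since each flip move contributes a uniformly bounded number of tetrahedra to $U_0$, this gives $|U_0| \le M_8 \|H\|$ for a universal constant $M_8$.

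I expect the main obstacle to be the bookkeeping behind (iii): one must confirm that the elementary moves along the hierarchy path split cleanly into twist moves and flip moves, that the flip moves are in bounded-multiplicity correspondence with edges of complexity-$4$ geodesics (so the count is governed by $\|H\|_4$, not merely by $|H|$), and — dually — that the number of Dehn twist moves, which is \emph{not} controlled by $\|H\|$ since a single pair of adjacent pants decompositions may require unboundedly many Dehn twist moves, is entirely confined to $U_{DT}$; this is exactly why the decomposition in (ii) is forced on us. A secondary technical point is verifying that consecutive block triangulations match along their shared horizontal boundary and that the two ends of the stack agree with $\cT_0$ and $\cT_1$ on the nose rather than merely up to isotopy, which is ensured by working throughout with standard triangulations suited to the (clean) markings and by the compatibility of each block triangulation with these.
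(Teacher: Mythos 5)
Your proposal is correct and follows the same outline the paper sketches in the paragraphs preceding the lemma (and Brock's own proof of \cite[Theorem 5.7]{brock:wp}): fix a resolution of $H$ to obtain a hierarchy path of clean markings, stack block triangulations realizing elementary moves, assign Dehn-twist-block tetrahedra to $U_{DT}$ and all others to $U_0$, and bound the flip-move count by $\|H\|_4 \le \|H\|$. The paper does not reprove the lemma but cites Brock, and your reconstruction of the bookkeeping behind (ii) and (iii) is precisely what the phrase ``we extract the following'' is alluding to.
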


\subsection{Telescopes of tori}\label{telescopesoftori}

Let $A = S^1 \times [0,1]$ be an annulus with core curve $\alpha = S^1 \times \{1/2\}$.  Consider the triangulation $\cT_A$ pictured in Figure \ref{tori}; it determines an integral $2$-chain $T_A\in \Cb_2(A;\bR)$.  Let $D_\alpha: A\to A$ be a right Dehn twist about $\alpha$.  Then $(D_\alpha)_*T_A$ is also an integral $2$-chain representing a triangulation $(D_\alpha)_*\cT_A$ of $A$.  

\begin{figure}[h]
\includegraphics[width = 5in]{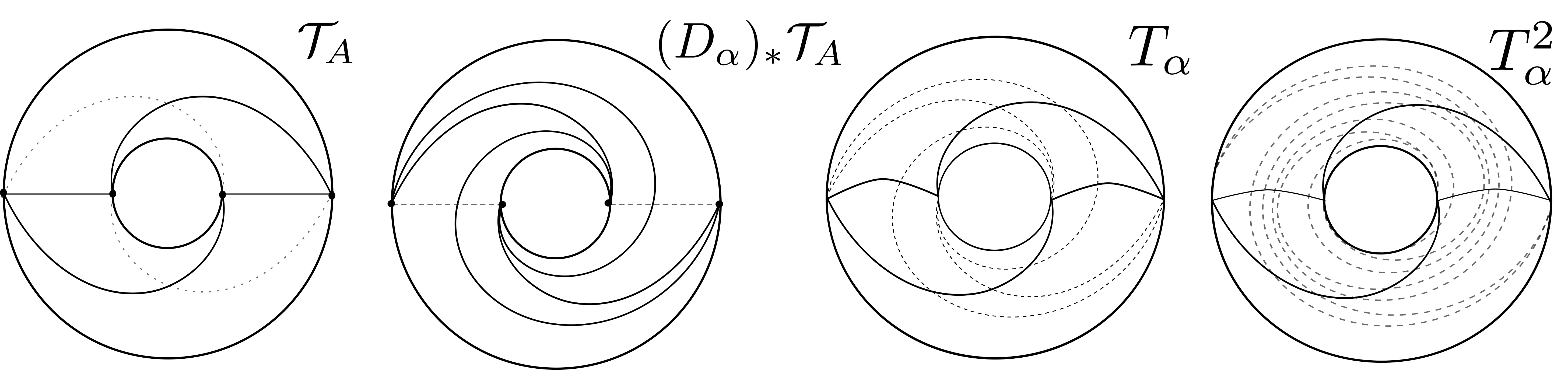}
\caption{The dotted edges in the annuli indicate the diagonal switches for $\cT_A$ and $(D_\alpha)_*\cT_A$ that produce a common triangulation of the annulus.  This results in the triangulation $\cS_\alpha$ of the solid torus $\bT_\alpha$ with boundary represented by the chain $T_\alpha$.  } 
\label{tori}
\end{figure}

Now consider two copies of $A$ called $A_L$ and $A_R$.  Glue these annuli via the identity mapping of their boundaries to obtain a torus $A_L\cup_\partial A_R$, and let $\iota_\bullet : A_\bullet \to A_L\cup_\partial A_R$ be the restriction of the quotient mapping for $\bullet = R,~L$.  Then $T_\alpha : = (\iota_L)_*T_A - (\iota_R\circ D_\alpha)_*T_A$ is a fundamental cycle for the torus, and it represents a triangulation $\cT_\alpha$ of $A_L \cup_\partial A_R$.  Moreover, since $\cT_A$ and $(D_\alpha)_*\cT_A$ differ by four diagonal switches that form a common refinement of $A$ (see Figure \ref{tori} as well as \cite{brock:wp} \S5.2), $\cT_\alpha$ determines a triangulation $\cS_\alpha$ on a solid torus $\bT_\alpha$ with core curve $\alpha$ obtained by filling in the 4 tetrahedra which interpolate between $\cT_A$ and $(D_\alpha)_*\cT_A$.  Finally, $\cS_\alpha$ determines an integral $3$-chain that we call $S_\alpha\in \Cb_3(\bT_\alpha;\bR)$.  Observe that $\partial S_\alpha = T_\alpha$ and $\|S_\alpha\|_1 = 4$.

More generally, we \emph{stack} $n+1$ copies of $A$, all glued by the identity mapping on their boundaries to form the complex $A_0 \cup_\partial A_1\cup_\partial ... \cup_\partial A_n$.  We also have restrictions of the quotient map $\iota_\bullet: A_\bullet \to A_0 \cup_\partial ... \cup_\partial A_n$, for $\bullet = 0, ..., n$.
We can find an integral chain $S_\alpha^n\in \Cb_3(\bT_\alpha;\bR)$ such that
 \begin{equation}\label{property1}
  \partial S_\alpha^n = (\iota_0)_*T_A - (\iota_n\circ D_\alpha^n)_* T_A=: T_\alpha^n,
  \end{equation}
 and such that 
 \begin{equation}\label{property2}
 \|S_\alpha^n\|_1 = 4n.
 \end{equation}
 This is because $(D_\alpha^{k-1})_*\cT_A$ and $(D_\alpha^{k})_*\cT_A$ also differ by four diagonal switches, so they determine a triangulation of a solid torus bound by $A_{k-1}\cup_\partial A_k$ inducing a $3$-chain that we call $(D_\alpha^k)_*S_\alpha$ such that $\partial (D_\alpha^k)_*S_\alpha = (\iota_{k-1}\circ D_\alpha^{k-1})_*T_A - (\iota_{k}\circ D^k_\alpha)_*T_A$.    Define 
 \[S_\alpha^n : = \sum_{k = 0}^{n-1}(D_\alpha^k)_* S_\alpha,\]
  and check that it satisfies the properties (\ref{property1}) and (\ref{property2}) stated above.  Note that what we have done here is no different from stacking \emph{right Dehn twist blocks} as in \cite{brock:wp}, but it will be convenient to have a name for the chain $S_\alpha^n$.  
  
Later in this section, we will be unhappy with the fact that $\|S_\alpha^n\|_1 = 4n$, so we will build a real $3$-chain $\sS_\alpha^n$ such that $\partial \sS_\alpha^n =T_\alpha^n$ and $\| \sS_\alpha^n\|_1<m_0$; we will see that we can actually take $m_0 = 37$, but the point is that it is uniformly bounded, not depending on $n$.  We will use the fact that covers of solid tori are themselves solid tori.  So, let $\pi : \widetilde{\bT_\alpha} \to \bT_\alpha$ be a two to one covering, and find a homeomorphism $s: \bT_\alpha \to \widetilde{\bT_\alpha} $ such that $(\pi\circ \partial s)_*\alpha = 2\alpha \in \pi_1(\partial \bT_\alpha)$.   

For any $n$, we claim that there is a $c^n \in \Cb_3(\bT_\alpha;\bR)$ such that 
\begin{equation}\label{property3}
\partial c^n = T_\alpha^n - \frac{1}{2}(\pi\circ \partial s)_*T_\alpha^n,
\end{equation}
and such that
\begin{equation}\label{property4}
\|c^n\|_1 = 18.
\end{equation}
The construction of $c^n$ is most easily seen in the universal cover $ \bR^2\times [0,\infty) $ of $\partial \bT_\alpha \times [0, \infty)$.  Here, we are thinking of $\partial \bT_\alpha \times [0, \infty)$ as $\bT_\alpha$ away from its core curve. We remark that a very similar construction can be found in \S4.4 of  \cite{sisto:zero}, and we are grateful to Maria Beatrice Pozzetti for the observation that it may be helpful for us, here.  

Identify $\pi_1(\partial \bT_\alpha)$ with the $\bZ$-span of $\{[\alpha], [m]\}$, where $m$ is the meridian of $\bT_\alpha$. We have an action of $\pi_1(\partial \bT_\alpha)$ on $\bR^2\times [0,\infty)$ by the formula $[\alpha]. (x,y, t) = (x+1, y , t)$ and $[m].(x,y,t) = (x, y+1, t)$. With this identification, we have the orbit projection map $p : \bR^2\times [0,\infty) \to  \partial \bT_\alpha \times [0, \infty)$ as well as a retraction $r: \partial \bT_\alpha\times [0,\infty) \to \partial \bT_\alpha\times \{0\}$.  

\begin{figure}[h]
\includegraphics[width=3in]{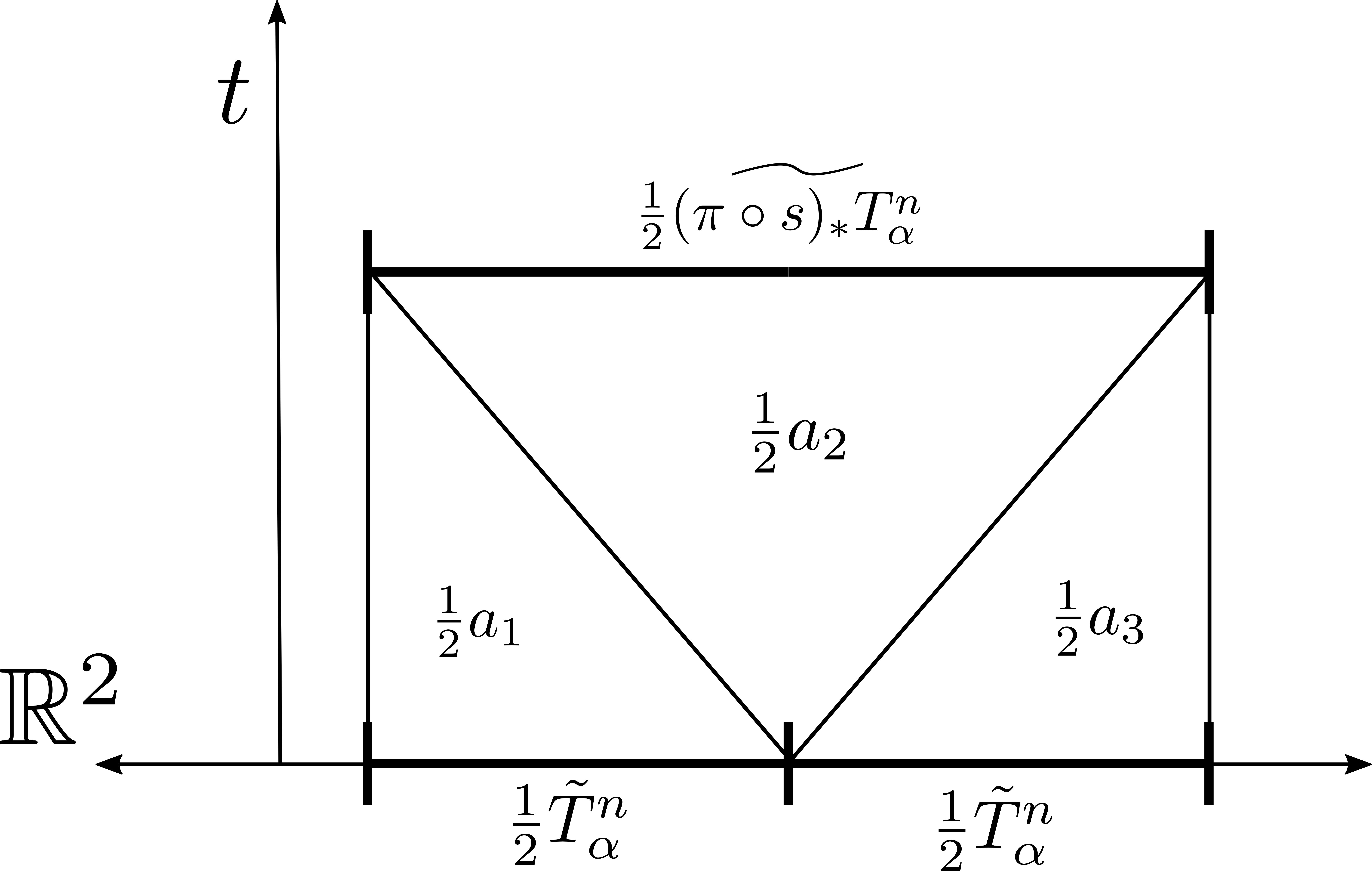}
\caption{Each of the $a_i$ are triangulations of the prisms joining the connected lift $\tilde{T}_\alpha^n$ with the left or right most edges of the connected lift $\widetilde{(\pi\circ s)_* T_\alpha^n}$. Define $\frac{1}{2} c^n_0:= \frac{1}{2}(a_1+a_2+a_3)$.}
\label{telescopespic}
\end{figure}

To construct the chain $c^n$, first place two  connected lifts of $T^n_\alpha$ to $\bR^2\times [0,\infty)$ next to each other on the plane $t=0$ and one lift of $(\pi\circ \partial s)_*T_\alpha^n$ on the plane $t=1$.  Using straight line homotopies, we  join these two levels in the pattern pictured in Figure \ref{telescopespic}.  The lifted triangulations determine cell structures on the images of the straight line homotopies, which we can triangulate using $12$ tetrahedra each for a total of 36 tetrahedra.  Call the chain induced by this triangulation $c^n_0$.  Then $c^n:=\frac{1}{2} (r\circ p)_* c^n_0$ satisfies Properties (\ref{property3}) and (\ref{property4}).

Choose $K_n\ge \lfloor \log_2 4n\rfloor +1$, and define 

\[ d_n = \sum_{k = 0}^{K_n-1} 2^{-k}(\pi\circ s)_*^k c^n.\]
 
 One checks that $\partial d_n = T_\alpha^n -2^{-K_n} (\pi\circ s)_*^{K_n}T^n_\alpha$.  Define \[\sS_\alpha^n : = d_n + 2^{-K_n} (\pi\circ s)_*^{K_n}S_\alpha^n.\]  The point of this section was to construct the chain $\sS_\alpha^n$; we have
 
 \begin{lemma}\label{telescope}	
 The chain $\sS_\alpha^n \in \Cb_3(\bT_\alpha;\bR) $ satisfies the properties \begin{enumerate}[(I)] 
 \item $\partial \sS_\alpha^n  = T_\alpha^n$ and
 \item $\|\sS_\alpha^n\|_1\le 37$.
 \end{enumerate}
 \end{lemma}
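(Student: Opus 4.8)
The plan is to read both properties directly off the definition
\[
\sS_\alpha^n = d_n + 2^{-K_n}(\pi\circ s)_*^{K_n} S_\alpha^n,
\qquad
d_n = \sum_{k=0}^{K_n-1} 2^{-k}(\pi\circ s)_*^k c^n,
\]
using only that $(\pi\circ s)_*$ is a chain map of $\Cb_\bullet(\bT_\alpha;\bR)$ (being induced by a continuous self-map of $\bT_\alpha$), that its restriction to chains supported on $\partial\bT_\alpha$ agrees with $(\pi\circ\partial s)_*$, and that pushforward by a continuous map does not increase the $1$-norm of a singular chain, i.e. $\|f_*C\|_1 \le \|C\|_1$.

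For property (I) I would first establish the telescoping identity $\partial d_n = T_\alpha^n - 2^{-K_n}(\pi\circ s)_*^{K_n} T_\alpha^n$: since $\partial$ commutes with each $(\pi\circ s)_*^k$ and $\partial c^n = T_\alpha^n - \tfrac12(\pi\circ\partial s)_* T_\alpha^n$ by (\ref{property3}), the defining sum for $d_n$ gives
\[
\partial d_n = \sum_{k=0}^{K_n-1}\Bigl( 2^{-k}(\pi\circ s)_*^k T_\alpha^n - 2^{-(k+1)}(\pi\circ s)_*^{k+1} T_\alpha^n \Bigr),
\]
in which consecutive terms cancel, leaving $T_\alpha^n - 2^{-K_n}(\pi\circ s)_*^{K_n} T_\alpha^n$. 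Here I must identify $(\pi\circ s)_*$ with $(\pi\circ\partial s)_*$ on the boundary $2$-chains $T_\alpha^n$, which is legitimate since $T_\alpha^n$ is supported on $\partial\bT_\alpha$. Then, since $\partial S_\alpha^n = T_\alpha^n$ by (\ref{property1}) and $(\pi\circ s)_*^{K_n}$ is a chain map, $\partial\bigl(2^{-K_n}(\pi\circ s)_*^{K_n} S_\alpha^n\bigr) = 2^{-K_n}(\pi\circ s)_*^{K_n} T_\alpha^n$, which exactly cancels the second term of $\partial d_n$, so $\partial\sS_\alpha^n = T_\alpha^n$.

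For property (II) I would apply the triangle inequality, $\|\sS_\alpha^n\|_1 \le \|d_n\|_1 + 2^{-K_n}\|(\pi\circ s)_*^{K_n} S_\alpha^n\|_1$. For the first term, subadditivity of $\|\cdot\|_1$ together with $\|f_* C\|_1 \le \|C\|_1$ and $\|c^n\|_1 = 18$ from (\ref{property4}) give $\|d_n\|_1 \le 18\sum_{k=0}^{K_n-1}2^{-k} < 18\cdot 2 = 36$. For the second term, $\|S_\alpha^n\|_1 = 4n$ by (\ref{property2}), so $2^{-K_n}\|(\pi\circ s)_*^{K_n} S_\alpha^n\|_1 \le 2^{-K_n}\cdot 4n$, which is $<1$ because the choice $K_n \ge \lfloor\log_2 4n\rfloor + 1$ forces $2^{K_n} > 4n$. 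Adding, $\|\sS_\alpha^n\|_1 < 37$.

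This is really a bookkeeping argument rather than one with a genuine obstruction; the only point that needs care — and the one I would state explicitly — is that pushforward of singular chains only \emph{weakly} contracts the $1$-norm, which is precisely what makes the geometric series $\sum 2^{-k}$ sum to the uniform bound $36$ independent of $n$, and that all the maps $(\pi\circ s)_*^k$ are being applied to chains living in $\bT_\alpha$ (with the restriction to $\partial\bT_\alpha$ being $(\pi\circ\partial s)_*$), so that (\ref{property1}) may be pushed forward legitimately.
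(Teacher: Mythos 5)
Your proposal is correct and takes essentially the same route as the paper: a telescoping computation of $\partial d_n$ for property (I) (which the paper simply asserts "follows directly from the construction") and the triangle inequality plus $\|f_*C\|_1\le\|C\|_1$ and a geometric series bound for property (II). The only difference is bookkeeping — you bound $\|d_n\|_1<36$ by truncating the geometric series and push $4n/2^{K_n}<1$, while the paper uses $\sum_{k\ge 0}2^{-k}=2$ and $4n/2^{K_n}\le 1$ to reach the same $\le 37$.
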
 
 \begin{proof}
Property (I) follows directly from the construction.  For Property (II), we have
\[\begin{aligned}
\|\sS_\alpha^n\|_1 &\le  \|d_n\|_1  +2^{-K_n} \|(\pi\circ s)_*^{K_n}S^n_\alpha\|_1 \\
& \le \sum_{k = 0}^{K_n-1} 2^{-k}\|(\pi\circ s)_*^k c^n\|_1 + \frac{\|S_\alpha^n\|_1}{2^{K_n}}\\
 &\le\sum_{k = 0}^{\infty} 2^{-k}\|c^n\|_1 + \frac{4n}{2^{K_n}} \le 18\cdot 2 +1=37.
\end{aligned}\]

\end{proof}

\subsection{The modified Dehn twist block}
In this section, we will modify the $n$ adjacent Dehn twist blocks associated to a single curve $\alpha$ that arise in the construction of the model manifold from \cite{brock:wp} Theorem 5.7.  These blocks are triangulated and so determine an integral $3$-chain $B_\alpha \in \Cb_3(S\times I ;\bR)$.  In fact, the chain $U_{DT}$ from Lemma \ref{goodtri} decomposes further as 
\[U_{DT} = \sum_{\substack{h_\alpha\in H \\ \xi(D(h_\alpha))=2}} B_\alpha.\]
Since $\|B_\alpha\|_1$ can be arbitrarily large (it's roughly proportional to both $|h_\alpha|$ and $d_{Y_\alpha} (\nu^-, \nu^+ )$), our goal in this section is to replace each $B_\alpha$ with a real $3$-chain $\mathscr{B}_\alpha \in \Cb_3(S\times I; \bR)$ such that $\partial B_\alpha = \partial \mathscr{B}_\alpha$ and $\|\mathscr{B}_\alpha \|_1 <m_0$.  

Given a pants decomposition $\cP$ and a curve $\alpha\in \cP$,  the \emph{subsurface block} $B_\alpha$ is the quotient \[B_\alpha = S_\alpha \times [0,1] / (x,t) \sim (x,0) \text{ for } x\in \partial S_\alpha, ~ t\in [0,1]. \]
Depending on the genus of the surface $S_\alpha\subset S$ (either $0$ or $1$), Brock describes a \emph{standard block triangulation} $\cT_\alpha$ of $B_\alpha$ which extends the cell structure on $B_\alpha$ given by $\cT_P|_{S_\alpha}\times [0,1]/\sim$. The precise details of this construction are not relevant for us, except that the number of tetrahedra in $\cT_\alpha$ is at most some $n_0$, no matter the genus of $S_\alpha$.

We must revisit the construction of the Dehn twist block from \cite{brock:wp}, \S5.2.  The difference is precisely the following:  In the description of the Dehn twist block (\textbf{BLI} in \cite{brock:wp}), we must start with the standard block triangulation $\cT_\alpha$ of $B_\alpha$.  Consider the annulus $A= \alpha \times [0,1]$ in $B_\alpha$ with the triangulation $T_A$ on $A$ induced by $\cT_\alpha$.  Cut $B_\alpha$ along $A$ to obtain two annuli $A_L$ and $A_R$ that bound the \emph{local} left and right side of $B_\alpha \setminus A$.  Re-glue the $\alpha\times\{0\}$ boundary components of $A_L$ and $A_R$ by the identity, and re-glue the $\alpha\times\{1\}$ boundary components of $A_L$ and $A_R$ shifted by $n$ Dehn twists.  Via this surgery operation, we have a triangulation of the closure of $B_\alpha\setminus A$ which defines an integral $3$-chain with  boundary $ T_\alpha^n$.  Add to this the  $3$-chain $\sS_\alpha^n$ to obtain $\mathscr{B}_\alpha$.  By Lemma \ref{telescope}, $\mathscr{B}_\alpha$ satisfies \begin{enumerate}[(I)]
\item $\partial \mathscr{B}_\alpha = \partial B_\alpha$
\item $\| \mathscr{B}_\alpha\|_1 \le 37 +n_0$.
\end{enumerate}
Define

 \[\mathscr{U}_{DT} := \sum_{\substack{h_\alpha \in H \\ \xi(D(h_\alpha))=2}} \mathscr{B}_\alpha.\]

With notation as in Lemma \ref{goodtri}, recall that $U = U_{DT} + U_0$, and define \[\mathscr{U} : = U_0 +\mathscr{U}_{DT}.\]

The following proposition is then a direct consequence of Lemma \ref{goodtri} and (I) and (II) above.

\begin{prop}\label{reallygoodtri} The chain $\mathscr{U}\in \Cb_3(S\times I; \bR)$ satisfies the following two properties
\begin{enumerate}[(i)]
\item $\partial \mathscr{U} = \cT_1\times \{1\}-\cT_0\times\{0\}$
\item $\|\mathscr{U}\|_1 \le M_8' \|H\|,$ 
\end{enumerate}
and  $M_8' = M_8(37+n_0)$.  

\end{prop}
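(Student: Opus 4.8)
The plan is to assemble the pieces already established; there is no new mechanism to build. Property (i) is purely formal. By construction $\mathscr{U} = U_0 + \mathscr{U}_{DT}$ with $\mathscr{U}_{DT} = \sum_{h_\alpha} \mathscr{B}_\alpha$, the sum taken over the annular geodesics $h_\alpha \in H$ (those with $\xi(D(h_\alpha))=2$), and for each such $h_\alpha$ we arranged in (I) that $\partial \mathscr{B}_\alpha = \partial B_\alpha$. Hence
\[\partial \mathscr{U} = \partial U_0 + \sum_{h_\alpha} \partial \mathscr{B}_\alpha = \partial U_0 + \sum_{h_\alpha} \partial B_\alpha = \partial(U_0 + U_{DT}) = \partial U,\]
and Lemma \ref{goodtri}(i) identifies the right-hand side with $\cT_1\times\{1\} - \cT_0\times\{0\}$. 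So I would just write out this chain of equalities.

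For property (ii) I would apply the triangle inequality for $\|\cdot\|_1$ together with the two norm estimates in hand:
\[\|\mathscr{U}\|_1 \le \|U_0\|_1 + \sum_{h_\alpha} \|\mathscr{B}_\alpha\|_1 \le M_8\|H\| + (37 + n_0)\, N,\]
where $\|U_0\|_1 \le M_8\|H\|$ is Lemma \ref{goodtri}(iii), $\|\mathscr{B}_\alpha\|_1 \le 37 + n_0$ is (II) above, and $N$ is the number of geodesics $h_\alpha \in H$ with $\xi(D(h_\alpha))=2$. It then remains to see that $N \le c\,\|H\|$ for a constant $c=c(S)$. Here I would invoke the structure of a hierarchy: the domain of each such $h_\alpha$ is the annulus with core some curve $\alpha$, and by the subordinacy conditions this $\alpha$ occurs in a simplex of some non-annular geodesic $g \in H$; since a hierarchy carries at most one geodesic per domain, distinct annular geodesics yield distinct pairs (non-annular geodesic, curve in one of its simplices). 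A non-annular geodesic $g$ has at most $|g|+1$ simplices, each carrying at most $\xi(S)$ curves, so the number of such pairs is $O(\|H\|)$ with implied constant depending only on $S$, by the standard finiteness features of hierarchies (cf.\ \cite{masur-minsky:complex2}). Absorbing this constant into $M_8$, we obtain $\|\mathscr{U}\|_1 \le M_8(37+n_0)\|H\| = M_8'\|H\|$.

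I do not expect a genuine obstacle: the proposition is a bookkeeping corollary of Lemma \ref{goodtri} and Lemma \ref{telescope}, exactly as the text claims. The one step worth a sentence of care is the estimate $N = O(\|H\|)$ on the number of modified Dehn-twist blocks; any linear-in-$\|H\|$ bound suffices since its coefficient is swept into the triangulation constant $M_8$, and one can also see it directly from Brock's original construction, where every curve $\alpha$ supporting a Dehn-twist block already appears as a vertex along the hierarchy path, so the number of blocks we modify is at most the number of such vertices.
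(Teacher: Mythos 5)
Your proof is correct and fills in exactly the bookkeeping that the paper leaves implicit by declaring the proposition a direct consequence of Lemma \ref{goodtri} and properties (I)--(II): the boundary computation for (i) is formal, and for (ii) you correctly identify the one non-trivial ingredient, namely that the number $N$ of annular geodesics in $H$ is linearly bounded by $\|H\|$, which follows from the standard finiteness structure of hierarchies exactly as you argue. The only cosmetic caveat is that your estimate yields $\|\mathscr{U}\|_1 \le (M_8 + c(37+n_0))\|H\|$ for some $c = c(S)$ rather than the paper's literal $M_8(37+n_0)\|H\|$, but since only the existence of an $S$-dependent constant is used downstream in Theorem \ref{separation}, absorbing $c$ into $M_8$ as you note is harmless.
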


\subsection{Lower bounds on norm}

We use the collection of facts from above and our results from \S\ref{below} to give a more refined lower bound for $|\hat\omega(V)|$.  

\begin{prop}\label{finelower}
Let $\mu_{i_0}$ and $\mu_{i_1}$ be complete markings associated to slices of a resolution of $H_\nu$ such that $i_0<i_1$ and $\base(\mu_{i_0}) \cap \base(\mu_{i_1})=\emptyset$. Suppose $F_i: S\to M_\g$ are simplicial hyperbolic surfaces whose associated triangulations $\cT_i$ are outfitted to $\mu_{i_k}$ for $k=0,1$, and suppose that $H$ is the subhierarchy path of $H_\nu$ with  $\mathbf{I}(H) = \mu_{i_0}$ and $\mathbf{T}(H) = \mu_{i_1}$.  There are constants $M_9$ and $M_{10}$ depending only on $S$ and a $3$-chain $V$ such that
\begin{enumerate}[(i)]
\item $\partial V = F_1\cT_1-F_0\cT_0$;
\item $\|V\|_1\le M_8'\|H\|$;
\item $|\hat\omega(V)| \ge M_9d_{\mathbf{P}}(\cP_0,\cP_1)-M_{10}$.
\end{enumerate}
 
\end{prop}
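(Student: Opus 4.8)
The plan is to produce $V$ from the interpolating triangulation $\mathscr U$ of Proposition \ref{reallygoodtri} and then to bound $|\hat\omega(V)|$ from below by the embedded--submanifold argument of Lemma \ref{lowerbound}, this time tracking the dependence of the trapped volume on $\|H\|_4$ and converting it to a bound in $d_{\mathbf P}(\cP_0,\cP_1)$ via Inequality (\ref{useful}). First, since $F_0$ and $F_1$ lie in the homotopy class determined by $\rho$, fix a homotopy $\mathbf h\colon S\times I\to M_\g$ with $\mathbf h|_{S\times\{0\}}=F_0$ and $\mathbf h|_{S\times\{1\}}=F_1$. Proposition \ref{reallygoodtri} produces $\mathscr U\in\Cb_3(S\times I;\bR)$ with $\partial\mathscr U=\cT_1\times\{1\}-\cT_0\times\{0\}$ and $\|\mathscr U\|_1\le M_8'\|H\|$; set $V:=\mathbf h_*\mathscr U$. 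Then $\partial V=\mathbf h_*(\partial\mathscr U)=F_1\cT_1-F_0\cT_0$, which is (i), and $\|V\|_1\le\|\mathscr U\|_1\le M_8'\|H\|$ since pushing a chain forward does not increase the $1$-norm, which is (ii).

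For (iii) the key point is that $\hat\omega(V)$ depends only on $\partial V$. Pick $\zeta\in\Omega^2(M_\g)$ with $d\zeta=\omega$ (possible because $\Hb^3_{\dR}(M_\g)=0$); since $\str$ is a chain map and $\str(F_j\cT_j)=F_j\cT_j$ (the $(F_j,\cT_j)$ being simplicial hyperbolic), Stokes' theorem gives $\hat\omega(V)=\int_{\str V}\omega=\int_{\str\partial V}\zeta=\int_{F_1\cT_1-F_0\cT_0}\zeta$. Because $\cP_0\cap\cP_1=\emptyset$, Lemma \ref{hmtpy} applied to $F_0$ and to $F_1$ yields extended split level surfaces $S_{\cP_0},S_{\cP_1}\subset M_\nu$, embeddings $G_j\colon S\to S_{\cP_j}\xrightarrow{\Phi}M_\g$ homotopic to $F_j$, homotopies $\eta_j\colon S\times I\to M_\g$, and triangulations $C_j$ of $S\times I$ with $\partial(\eta_{j*}C_j)=F_j\cT_j-G_j\cT_j$ and $\bigl|\int_{\eta_{j*}C_j}\omega\bigr|\le\mass_{\eta_j}(C_j)<M$, where $M=M(S)$. (Lemma \ref{hmtpy} is stated for triangulations adapted to $\cP$ in the strict sense, but its proof uses only that the simplicial hyperbolic surface sends each curve of $\cP_k$ to its geodesic representative and has area at most $2\pi(2g-2)$ by Gauss--Bonnet, so it applies verbatim to the outfitted surfaces $F_j$.) By the monotone arrangement of extended split level surfaces and $i_0<i_1$, the disjoint surfaces $S_{\cP_0}$ and $S_{\cP_1}$ bound an embedded $W'\cong S\times I$ in $M_\nu$; put $W=\Phi(W')$, triangulated so that $\partial W=G_1\cT_1-G_0\cT_0$. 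Then $F_1\cT_1-F_0\cT_0=\partial\bigl(W+\eta_{1*}C_1-\eta_{0*}C_0\bigr)$, and a second application of Stokes gives
\[|\hat\omega(V)|=\Bigl|\int_W\omega+\int_{\eta_{1*}C_1}\omega-\int_{\eta_{0*}C_0}\omega\Bigr|\ge\vol(W)-2M.\]

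It remains to bound $\vol(W)$ below linearly in $d_{\mathbf P}(\cP_0,\cP_1)$. The region $W'$ between $S_{\cP_0}$ and $S_{\cP_1}$ contains every internal block $B(e)$ of $M_\nu$ coming from an edge $e$ of a complexity-$4$ geodesic of the subhierarchy $H$; as such a geodesic $h$ has exactly $|h|$ edges, $W'$ contains at least $\|H\|_4-c_0$ internal blocks, with $c_0=c_0(S)$ absorbing the boundedly many blocks that $S_{\cP_0}$ or $S_{\cP_1}$ can straddle. Each of the two isometry types of internal block carries a fixed metric, hence has volume at least a universal constant $v_{\mathrm{block}}>0$, so $\vol_{M_\nu}(W')\ge v_{\mathrm{block}}(\|H\|_4-c_0)$, and since $\Phi$ is $K$-bi-Lipschitz with $K=K(S)$ we get $\vol(W)\ge K^{-3}v_{\mathrm{block}}(\|H\|_4-c_0)$. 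Combining with Inequality (\ref{useful}), $\|H\|_4\ge M_6\,d_{\mathbf P}(\cP_0,\cP_1)-M_7$, yields (iii) with $M_9=K^{-3}v_{\mathrm{block}}M_6$ and $M_{10}=K^{-3}v_{\mathrm{block}}(M_7+c_0)+2M$, both depending only on $S$. The two Stokes computations are routine, being identical to those in Lemma \ref{lowerbound}; the genuinely delicate step is the block count, \ie verifying that the submanifold trapped between the two extended split level surfaces sees all but a bounded number of the $\|H\|_4$ blocks of the subhierarchy. This amounts to reconciling the monotone arrangement of extended split level surfaces with the resolution defining $H$ (\cite{durham:hierarchies}, Appendix 8) and with the edge-to-block correspondence in the model (\cite{minsky:ELTI}, \S8; \cite{brock-canary-minsky:ELTII}, \S4), and is where the remaining work is concentrated.
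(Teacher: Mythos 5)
Your proposal is correct and follows essentially the same route as the paper: define $V=\mathbf h_*\mathscr U$ from Proposition \ref{reallygoodtri}, get (i)--(ii) immediately, then trap an embedded submanifold $W$ between extended split level surfaces, bound its volume below by counting internal blocks (one per edge of a $4$-geodesic in the subhierarchy), and convert $\|H\|_4$ to $d_{\mathbf P}(\cP_0,\cP_1)$ via Inequality (\ref{useful}). The minor refinements you add---the correction term $c_0$ for blocks possibly straddled by the split level surfaces, and the parenthetical remark that Lemma \ref{hmtpy} applies equally to triangulations \emph{outfitted} to markings rather than \emph{adapted} to pants decompositions---are reasonable clarifications of points the paper passes over silently, and do not change the argument.
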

\begin{proof} First we define $V$.  Let $H:S\times I \to M_\rho$ be any homotopy between $F_0$ and $F_1$ and set $H_*(\mathscr{U})=V$, where $\mathscr{U}$ is the chain from Proposition \ref{reallygoodtri} built from the data in our hypotheses.  Notice that $\partial\str(V) = \str\partial V = \str(F_1\cT_1 - F_0\cT_0) = F_1\cT_1 - F_0\cT_0$.  Properties (i) and (ii) are established.  

The following is essentially just a refinement of the work done in Lemma \ref{lowerbound}.  We have done some additional bookkeeping and have a more refined estimate for the volume of the submanifold $W$, which we now include.  So, proceeding as in the proof of Lemma \ref{lowerbound}, find extended split level surfaces $G_0$ and $G_1$ with disjoint images bounding a submanifold $W\subset M_\rho$ homeomorphic to $S\times I$ as in Proposition \ref{Split}.  The submanifold $W$ is the $K$-bi-lipschitz image of a union of \emph{blocks} and \emph{tubes} in the model manifold $M_\nu$.  There are two isometry types of blocks, and there is one block for every edge in a 4-geodesic, \ie in a geodesic $h\in H$ such that $D(h)=4$ (see \cite{minsky:ELTI}, \S8).  The volume of each block in the model metric is at least a constant $M_9'$.  Thus, \[\vol(W)\ge M_9'K^{-3}\|H\|_4.\]
By Inequality (\ref{useful}), this implies in particular that 
\[\vol(W)\ge M_6M_9'K^{-3}d_\mathbf{P}(\cP_{i_0},\cP_{i_1})-M_7.\]  Define $M_9 = M_6M_9'K^{-3}$. We now have the more refined estimate for the volume of $W$ that we were seeking.  We will now finish the proof and establish (iii), just as we did in Lemma \ref{lowerbound}.  

Build homotopies as in Lemma \ref{lowerbound} from $G_0$ to $F_0$ and $G_1$ to $F_1$ triangulated by $C_0$ and $C_1$.  Any triangulation of $W$ (also called $W$) gives us a $3$-chain $V' = C_1+W+C_0$, and \[\partial V' = F_1\cT_1 - F_0\cT_0=\partial V.\]
Thus, by a similar computation as in the proof of Lemma \ref{lowerbound},  \[|\hat\omega(V)| = |\hat\omega(V')|\ge \vol(W)-2M_0\ge M_9d_\mathbf{P}(\cP_{i_0},\cP_{i_1})-M_7-2M_0,\]  where $M_0$ is the constant from Lemma \ref{hmtpy}.  Set $M_{10} = M_7-2M_0$.  This establishes Property (iii).  
 \end{proof}
 
 We are ready to state the main result of this section. The idea here is to show that the volume of the `average simplex' in our sequence of $3$-chains must be bounded below.  We will see that this lower bound on the average simplex supplies a lower bound on the norm of $\|\hat\omega_\rho-\hat\omega_{\rho'}+\delta B\|_\infty$ for any bounded $2$-co-chain $B$, when $\rho$ and $\rho'$ have different geometrically infinite end invariants.  We use our more refined lower bounds for volume and upper bounds for the number of tetrahedra required to triangulate the homotopy bounded by two simplicial hyperbolic surfaces to obtain the desired bound on the average simplex.
 
 Here we show that the bounded fundamental classes for geometrically infinite surface groups are \emph{uniformly separated} in pseudo-norm, by a constant that depends only on the genus of $S$.  This is a fairly strong strengthening of \cite{soma:boundedsurfaces}, Theorem A.  We remark that there is no reason to believe that our $\epsilon$ in the statement of Theorem \ref{separation} is in any way optimal.  It would be interesting to know what the optimal constant is.  We know, for example, that it must be in the interval $(0, 2v_3]$ (see \cite{soma:boundedsurfaces}, Proposition 3.3).

\begin{theorem}\label{separation}
Let $S$ be a closed orientable surface of genus at least $2$.  There is a constant $\epsilon = \epsilon(S)>0$ such that the following holds. If $\{\rho_j: \pi_1(S) \to \G: j = 1, ... ,n\}$ is a collection of discrete and faithful representations without parabolic elements such that at least one of the geometrically infinite end invariants of $M_{\rho_i}$ is different from the geometrically infinite end invariants of $M_{\rho_j}$ for all $i\not= j$ then 
\[\|\sum_{j = 1}^n a_j[\hat\omega_{\rho_j}]\|_\infty > \epsilon\max{|a_j|}.\]
 \end{theorem}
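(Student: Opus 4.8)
The plan is to run the argument of Theorem~\ref{mainthm1}, but to feed it the \emph{efficient} $3$-chains of Proposition~\ref{finelower} in place of the bounded-complexity chains of Construction~\ref{chains}: those chains have $1$-norm comparable to the pants-graph distance they traverse while their straightened volume in the manifold carrying the distinguished ending lamination grows at least linearly in that distance, and this two-sided control is exactly what promotes linear independence into a uniform separation estimate. Given $\{\rho_j\}_{j=1}^n$ as in the statement, I would first pick an index $i$ with $|a_i|=\max_j|a_j|$ (there is nothing to prove if every $a_j$ vanishes), set up the common chain complex $\Cb_\bullet(S\times\bR)$ with the straightenings $\str_{\rho_j}$ and forms $\omega_{\rho_j}$ exactly as in \S\ref{main1}, and --- after reflecting the $\bR$-coordinate if necessary (which changes neither the hypotheses nor the quantity to be estimated) --- assume the distinguished geometrically infinite end invariant $\lambda_i\in\cEL(S)$ of $M_{\rho_i}$ equals $\nu^+_{\rho_i}$. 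Build a hierarchy $H_\nu$ realizing $\nu=\nu(\rho_i)$, fix a resolution with markings $\{\mu_k\}$ and base pants decompositions $\cP_k=\base(\mu_k)$; since $\cP_k\to\lambda_i\in\partial\cC(S)$ we have $d_{\cC(S)}(\cP_0,\cP_k)\to\infty$, so $\cP_0\cap\cP_k=\emptyset$ and $D_k:=d_{\mathbf P}(\cP_0,\cP_k)\to\infty$ for all sufficiently large $k$. For such $k$, realizing $\mu_0$ and $\mu_k$ by simplicial hyperbolic surfaces $F_0,F_k\colon S\to M_{\rho_i}$ with associated standard triangulations $\cT_0,\cT_k$ outfitted to them (so $|\cT_0|=|\cT_k|=16(g-1)$), Proposition~\ref{finelower}, applied to the subhierarchy path $H$ of $H_\nu$ with $\mathbf I(H)=\mu_0$ and $\mathbf T(H)=\mu_k$, produces a $3$-chain $V_k$ with $\partial V_k=F_k\cT_k-F_0\cT_0$, $\|V_k\|_1\le M_8'\|H\|$, and $|\hat\omega_{\rho_i}(V_k)|\ge M_9 D_k-M_{10}$. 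Combining with Theorem~\ref{counting} gives $\|V_k\|_1\le M_8'M_2 D_k$, so
\[
\frac{|\hat\omega_{\rho_i}(V_k)|}{\|V_k\|_1}\ \ge\ \frac{M_9 D_k-M_{10}}{M_8'M_2 D_k}\ \longrightarrow\ \frac{M_9}{M_8'M_2}\ >\ 0\qquad(k\to\infty),
\]
and since $\|\hat\omega_{\rho_i}\|_\infty=v_3$ forces $v_3\|V_k\|_1\ge|\hat\omega_{\rho_i}(V_k)|\to\infty$, also $\|V_k\|_1\to\infty$.

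Next I would bound the cross terms $\hat\omega_{\rho_j}(V_k)$, $j\neq i$, exactly as in Theorem~\ref{mainthm1}. Since $\lambda_i$ is not an end invariant of $M_{\rho_j}$, Theorem~\ref{NS} forces $\lambda_i$ to be realized in $M_{\rho_j}$, so the $\sigma_{\rho_j}$-geodesic representatives of pants curves $\alpha_k\in\cP_k$ with $\alpha_k\to\lambda_i$ remain in a fixed compact subset of $M_{\rho_j}$. I would then choose simplicial hyperbolic surfaces $B_k^j\colon S\to M_{\rho_j}$ adapted to $\cT_k$ and realizing $\cP_k$ geodesically, in the homotopy class of the inclusion (the same class as $F_k$); then all the $B_k^j$ and $B_0^j$ meet one fixed compact set, so Lemma~\ref{upper} bounds $|\hat\omega_{\rho_j}(C_k^j)|$ by a constant $C_{i,j}$ independent of $k$ for any $3$-chain $C_k^j$ with $\partial C_k^j=B_k^j\cT_k-B_0^j\cT_0$. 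Because $F_k\simeq B_k^j$ and $F_0\simeq B_0^j$, Lemma~\ref{straighten} bounds $|\hat\omega_{\rho_j}(V_k-C_k^j)|$ by a constant depending only on $S$ (the two chains have boundaries realized by the prism chains of that lemma and differ by a $3$-cycle, on which the cocycle $\hat\omega_{\rho_j}$ vanishes). Hence $|\hat\omega_{\rho_j}(V_k)|\le c_{i,j}$ with $c_{i,j}$ independent of $k$.

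Finally I would assemble the coboundary estimate. Let $B\in\Cb^2_b(S\times\bR;\bR)$ be arbitrary and put $\alpha=\sum_j a_j\hat\omega_{\rho_j}-\delta B$, a bounded cocycle representing $\sum_j a_j[\hat\omega_{\rho_j}]$, so that $\|\sum_j a_j[\hat\omega_{\rho_j}]\|_\infty=\inf_B\|\alpha\|_\infty$ (using also the isometry $\Hb_b^3(S\times\bR;\bR)\cong\Hb^3_b(\pi_1(S);\bR)$). Using $\alpha(V_k)=\sum_j a_j\hat\omega_{\rho_j}(V_k)-B(\partial V_k)$ and $|\partial V_k|=32(g-1)$,
\[
\|\alpha\|_\infty\,\|V_k\|_1\ \ge\ |\alpha(V_k)|\ \ge\ |a_i|\,|\hat\omega_{\rho_i}(V_k)|-\sum_{j\ne i}|a_j|\,c_{i,j}-32(g-1)\|B\|_\infty .
\]
Dividing by $\|V_k\|_1$, using $|a_j|\le|a_i|$, and letting $k\to\infty$, the last two terms are fixed constants over $\|V_k\|_1\to\infty$ and vanish, while the leading ratio tends to $|a_i|M_9/(M_8'M_2)$ by the first paragraph; hence $\|\alpha\|_\infty\ge|a_i|M_9/(M_8'M_2)$ for every $B$, so $\|\sum_j a_j[\hat\omega_{\rho_j}]\|_\infty\ge\max_j|a_j|\cdot M_9/(M_8'M_2)$, and the theorem holds with, say, $\epsilon(S)=M_9/(2M_8'M_2)$. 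The one genuinely new ingredient beyond Theorem~\ref{mainthm1} is the pair of simultaneous bounds ``$|\hat\omega_{\rho_i}(V_k)|$ at least a positive multiple of $D_k$'' and ``$\|V_k\|_1$ at most a multiple of $D_k$'' supplied by \S\ref{main2}: jointly they keep $|\hat\omega_{\rho_i}(V_k)|/\|V_k\|_1$ bounded below and, since $|\partial V_k|$ stays bounded while $\|V_k\|_1\to\infty$, render the $\delta B$ contribution negligible uniformly in $B$. So the hard work is already packaged in Proposition~\ref{finelower} --- Brock's efficient triangulations, the telescope-of-tori replacement of the Dehn-twist block of \S\ref{telescopesoftori}, and the hierarchy-counting Theorems~\ref{counting} and~\ref{four} --- and the point to be careful about here is only to run all three of the above estimates on the single sequence $\{V_k\}$ and to check that every auxiliary constant other than $D_k$ is bounded in $k$.
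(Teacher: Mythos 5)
Your proposal is correct and follows essentially the same route as the paper: it selects an index achieving $\max|a_j|$, builds the efficient chains of Proposition~\ref{finelower} along a subhierarchy of $H_\nu$ toward the distinguished ending lamination, pairs the lower volume bound with the $1$-norm bound $\|V_k\|_1\le M_8'\|H\|\le M_8'M_2\,d_{\mathbf P}$ from Theorem~\ref{counting}, controls the cross terms via the realization/compactness argument of Theorem~\ref{mainthm1} together with Lemma~\ref{straighten}, and divides by $\|V_k\|_1$ to make the coboundary and cross-term contributions vanish in the limit, recovering the same constant $M_9/(M_8'M_2)$. The only cosmetic difference is that the paper divides by $\|H_i\|$ rather than $\|V_i\|_1$, and your invocation of Theorem~\ref{NS} to get the cross-term compactness is a harmless overkill in the closed-surface case where the paper argues directly from the definition of the end invariants.
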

\begin{proof}
Without loss of generality, assume that $|a_1| = \max{|a_j|}$ where $\rho_1$ has end invariants $\nu = (\nu^-,\nu^+)$, a hierarchy $H_\nu$, and resolution by slices with corresponding markings $\{\mu_i\}$.  Without loss of generality, assume $\nu^+=\lambda \in \cEL(S)$ is the end invariant of $\rho_1$ that is different from all the end invariants of $\rho_j$,  $j>1$.  We construct a sequence of chains $V_k$ analogous to those obtained from Construction \ref{chains} as follows.  For each $i\ge0$, we have a complete marking $\mu_i$.  Let $H_i$ be the sub-hierarchy of $H_\nu$ with $\mathbf{I}(H_i)=\mu_0$ and $\mathbf{T}(H_i)=\mu_i$.   Choose $i$ large enough so that $\cP_{0}\cap \cP_{i}=\emptyset$.  Now for each $i$, apply Proposition \ref{finelower} to obtain a $3$-chain $V_i\in C_3(S\times \bR)$ satisfying properties (i), (ii), and (iii).  
 
Using Theorem \ref{counting} 
we have \begin{equation}\label{above}d_\mathbf{P}(\cP_0,\cP_i)> \frac{\|H_i\|}{M_2}.\end{equation}% ? \frac{|j-i|}{N+M_1}.\]

By Property (iii) of our sequence $\{V_i\}$ and Inequality (\ref{above}),  \begin{align*} |\hat\omega_{\rho_1}(V_i)|&\ge M_9d_\mathbf{P}(\cP_{0},\cP_{i})-M_{10}\\ &\ge \frac{M_9\|H_i\|}{M_2}-M_{10} \\ &= \epsilon'\|H_i\|-M_{10},
\end{align*}
where $\epsilon'=\frac{M_9}{M_2}$.  

Let $B\in \Cb_b^2(S\times \bR;\bR)$ be arbitrary.  Standard triangulations satisfy $|\cT_i| = 16(g-1)$ for all $i$, so  \[|\delta B(V_i)|=|B(F_{i}\cT_{i} -F_{0}\cT_{0})|\le \|B\|_\infty 32(g-1).  \]  

As in the proof of Theorem \ref{mainthm1}, since $M_{\rho_j}$ does not have $\lambda$ as an end invariant, there is some $c_j>0$ such that 
\[|\hat\omega_{\rho_j}(V_i)|\le c_j\] for all $i$ and $j>1$.   We have 
\begin{align*}
 |(a_1 \hat\omega_{\rho_1}+\sum_{j=2}^n a_j\hat\omega_{\rho_j}+\delta B)(V_i)|&\ge 
|a_1|(\epsilon'\|H_i\| -M_{10})-\sum_{j=2}^n |a_j| c_j- 32(g-1)\|B\|_\infty\\
&=|a_1|\epsilon' \|H_i\|-M_{11}
\end{align*}
On the other hand, using Property (ii) of the sequence $\{V_i\}$, 
\begin{align*}
|(\sum_{j = 1}^n a_j\hat\omega_{\rho_j}+\delta B)(V_i)|&\le \|\sum_{j = 1}^n a_j\hat\omega_{\rho_j}+\delta B\|_\infty\|V_i\|_1\\ &\le \|\sum_{j = 1}^n a_j\hat\omega_{\rho_j}+\delta B\|_\infty M_8'\|H_i\|.
\end{align*}
Thus we have a bound \[\|\sum_{j = 1}^n a_j\hat\omega_{\rho_j}+\delta B\|_\infty\ge \frac{|a_1|\epsilon' \|H_i\|-M_{11}}{M_8'\|H_i\|}=|a_1|\epsilon -\frac{M_{11}}{M_8'\|H_i\|}. \]

Since $\frac{M_{11}}{M_8'\|H_i\|}\to 0$ as $i\to \infty$, and since $B$ was arbitrary, we have shown that \[\|\sum_{j = 1}^n a_j[\hat\omega_{\rho_j}]\|_\infty>|a_1|\epsilon,\] where $\epsilon = \frac{M_9}{M_2M_8'}$ depends only on $S$.  The theorem follows.
\end{proof}
The proof of Theorem \ref{separation} goes through for free groups and fundamental groups of compression bodies, just as in Corollary \ref{general}.  By lifting to the covers corresponding to the ends, we recover Theorem \ref{separation:1} from the introduction.
Corollary \ref{nice map} is now immediate from Theorem \ref{separation} and Corollary \ref{singleboundary}.

\subsection{A criterion for injectivity}

We need an alternate description of the bounded fundamental class of a representation that is more group theoretic and extends to representations that are not necessarily discrete.  Let $\g$ be a countable, discrete group and $\rho: \g\to \G$ be any representation.  Fix a point $x\in \bH^3$, and define $\hat\omega_\rho \in \Cb_b^3(\g;\bR)$ by setting $\hat\omega_\rho (\gamma_0, ..., \gamma_3)$ to be the signed hyperbolic  volume of the convex hull of the $4$-tuple $(\rho(\gamma_0).x, ..., \rho(\gamma_3).x)$.  One checks that $\delta \hat\omega_\rho = 0 $ and $\|\hat\omega_\rho\|\le v_3$.  It is a standard fact that $[\hat\omega_\rho]\in \Hb_b^3(\g;\bR)$ is independent of the choice of $x$, and coincides with the ordinary bounded fundamental class when $\rho$ is a discrete and faithful representation.  Note that if $\gamma\in \ker\rho$, then $\hat\omega_\rho (\gamma^k\gamma_0, \gamma_1, \gamma_2, \gamma_3) = \hat\omega_\rho (\gamma_0, \gamma_1, \gamma_2, \gamma_3)$, for example.

We can now establish a criterion for injectivity. 
\begin{theorem}\label{injective theorem}
Let $S$ be an orientable surface with negative Euler characteristic.  There is a constant $\epsilon' = \epsilon'(S)$ such that the following holds.  Let $\rho: \pi_1(S)\to \G$ be discrete and faithful, without parabolic elements, and at least one geometrically infinite end invariant $\lambda$.  If $\rho': \pi_1(S)\to \G$ is any other representation satisfying \[\|[\hat\omega_\rho] - [\hat\omega_{\rho'}]\|_\infty <\epsilon',\]
then $\rho'$ is faithful.  
\end{theorem}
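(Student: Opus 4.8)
The plan is to argue by contradiction, deriving a uniform lower bound on $\|[\hat\omega_\rho]-[\hat\omega_{\rho'}]\|_\infty$ whenever $\rho'$ fails to be faithful. First dispose of the case that $\im\rho'$ is amenable (in particular, finite, which happens when $\ker\rho'$ has finite index): then $\Hb_b^3(\im\rho';\bR)=0$, and since $\hat\omega_{\rho'}$ is the pullback under $\rho'$ of the volume cocycle on $\im\rho'$, we get $[\hat\omega_{\rho'}]=0$; hence $\|[\hat\omega_\rho]-[\hat\omega_{\rho'}]\|_\infty=\|[\hat\omega_\rho]\|_\infty=v_3$ by Soma's theorem quoted in the introduction, and it suffices to take $\epsilon'\le v_3$ to handle this case. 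Assume from now on that $\im\rho'$ is non-amenable; then $\ker\rho'$ has infinite index and is a nontrivial free group.

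The heart of the matter is to find a mapping class of $S$ that fixes the bounded fundamental class of $\rho'$ while moving that of $\rho$ to a genuinely different one. Using the structure of nontrivial infinite-index normal subgroups of surface (and free) groups, I will produce an essential, non-peripheral simple closed curve $a\subset S$ with $\rho'(a)$ of finite order, equivalently $a^N\in\ker\rho'$ for some $N\ge1$; set $\psi:=T_a^N\in\Mod(S)$. Because $(T_a^N)_*$ descends to the identity on $\pi_1(S)/\langle\langle a^N\rangle\rangle$ and $\rho'$ factors through this quotient, $\rho'\circ\psi_*$ is conjugate to $\rho'$ in $\G$, so the isometric automorphism $\psi_*^*$ of $\Hb_b^3(\pi_1(S);\bR)$ induced by $\psi_*$ satisfies $\psi_*^*[\hat\omega_{\rho'}]=[\hat\omega_{\rho'}]$. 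On the other hand $\psi_*$ is an automorphism of $\pi_1(S)$, so $\rho\circ\psi_*$ is again discrete, faithful, parabolic-free, with the same image as $\rho$, and its geometrically infinite end invariant is $T_a^{\pm N}(\lambda)\ne\lambda$ — a nonzero power of a Dehn twist cannot fix a filling lamination, since its iterates converge to $a$ in $\mathcal{PML}(S)$. Moreover $\psi_*^*[\hat\omega_\rho]=[\hat\omega_{\rho\circ\psi_*}]$ by naturality of the (group-cocycle) bounded fundamental class.

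Now apply Theorem \ref{separation} to the pair $\{\rho,\ \rho\circ\psi_*\}$, whose geometrically infinite end invariants $\lambda$ and $T_a^{\pm N}(\lambda)$ are distinct (one checks that precomposition by $T_a^N$ cannot merely swap two geometrically infinite ends of $\rho$, as $T_a^{2N}$ would then fix a filling lamination), to obtain $\|[\hat\omega_\rho]-[\hat\omega_{\rho\circ\psi_*}]\|_\infty>\epsilon(S)$. Since $\psi_*^*$ is an isometry fixing $[\hat\omega_{\rho'}]$,
\[
\epsilon(S)<\big\|[\hat\omega_\rho]-[\hat\omega_{\rho\circ\psi_*}]\big\|_\infty=\big\|\big([\hat\omega_\rho]-[\hat\omega_{\rho'}]\big)-\psi_*^*\big([\hat\omega_\rho]-[\hat\omega_{\rho'}]\big)\big\|_\infty\le 2\big\|[\hat\omega_\rho]-[\hat\omega_{\rho'}]\big\|_\infty .
\]
Thus $\|[\hat\omega_\rho]-[\hat\omega_{\rho'}]\|_\infty>\epsilon(S)/2$. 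Setting $\epsilon'=\epsilon'(S):=\epsilon(S)/2$ — which is $\le v_3$ since $\epsilon(S)\le 2v_3$, so it also subsumes the amenable case — contradicts the hypothesis $\|[\hat\omega_\rho]-[\hat\omega_{\rho'}]\|_\infty<\epsilon'$, proving $\rho'$ faithful. When $S$ is non-closed, $\pi_1(S)$ is free and the same argument runs with Theorem \ref{separation} replaced by its analogue for free groups (and compression bodies) indicated after its proof; the thrice-punctured sphere is vacuously covered, as it carries no filling laminations.

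The main obstacle is extracting the curve $a$ from the mere non-faithfulness of $\rho'$. Having reduced to $\im\rho'$ non-amenable, $\ker\rho'$ is a nontrivial infinite-index free normal subgroup; taking $1\ne w\in\ker\rho'$, if $w$ is conjugate to a proper power or to a simple loop then its primitive root is an essential curve whose $\rho'$-image is torsion and we are done, while in general one invokes the fact that a nontrivial normal subgroup of a surface or free group that is not of finite index must contain the class of an essential non-peripheral simple closed curve (a standard consequence of covering-space and subgroup-structure arguments). A secondary point needing care is the bookkeeping of end invariants under $\psi$ — checking that $T_a^{N}$ does not permute the geometrically infinite ends among themselves, as indicated above — which guarantees the hypothesis of Theorem \ref{separation} is genuinely met.
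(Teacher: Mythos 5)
Your high-level strategy — use a Dehn twist power that acts trivially on $[\hat\omega_{\rho'}]$ but moves the geometrically infinite end invariant of $\rho$, then invoke Theorem \ref{separation} and the triangle inequality — is the same as the paper's, and your amenable-image reduction and the final norm estimate are both clean. But there is a genuine gap at the step where you produce the curve $a$.

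You appeal to the ``standard consequence'' that a nontrivial infinite-index normal subgroup of a surface or free group must contain (the class of) an essential non-peripheral simple closed curve. As stated this is false. For a closed surface $S$ of genus $g\ge 2$, the third term $\gamma_3(\pi_1(S))$ of the lower central series is a nontrivial, infinite-index, normal subgroup that contains no power of any simple closed curve: non-separating curves survive already in $H_1$, while a separating curve $\prod_{i=1}^h[a_i,b_i]$ has image $\sum_{i\le h}\bar a_i\wedge\bar b_i \pmod{\omega}$ in $\gamma_2/\gamma_3\cong \Lambda^2 H_1(S)/\langle\omega\rangle$, which is nonzero (and non-torsion) for $1\le h\le g-1$. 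Likewise $[F_2,F_2]$ contains no power of a primitive. Your preliminary reduction to non-amenable $\im\rho'$ does rule out these particular examples (their quotients are nilpotent), but the restricted assertion --- that every nontrivial infinite-index normal subgroup with non-amenable quotient contains a power of an s.c.c.\ --- is not a standard fact, you give no reference or proof, and I do not see why it should be true. The entire argument hinges on it.

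The paper's proof sidesteps this issue in a more robust way: given any nontrivial $\gamma\in\ker\rho'$, surface groups are LERF (Scott), so there is a finite-sheeted cover $i:S'\to S$ in which $\gamma$ lifts to a \emph{simple} curve, and one works with the Dehn twist $D_\gamma$ on $S'$. This requires no structural claim about $\ker\rho'$ beyond nontriviality. The price is that $\epsilon(S')$ may degrade as the genus of $S'$ grows, and the paper pays it with a transfer-map argument on chains: the transfer $\tau: C_\cdot(S\times I)\to C_\cdot(S'\times I)$ scales both $|\hat\omega_\rho(V_j)|$ and $\|V_j\|_1$ by $\deg(i)$, so the volume-per-simplex ratio --- and hence the separation constant --- is retained at the level of $S$, not $S'$. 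If you want to salvage your approach, you would either need to prove the subgroup claim under the non-amenability hypothesis, or adopt the cover-plus-transfer mechanism; the latter is what makes the $\epsilon(S)$ in the statement depend only on $S$.
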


\begin{proof}
Assume first that $S$ is closed and take $\epsilon'<\epsilon(S)/2$, where $\epsilon$ is the constant from Theorem \ref{separation}.   For sake of contradiction, let $\gamma \in \ker\rho'$ be non-trivial.  There is a finite index subgroup $G\le \pi_1(S)$ such that $\gamma$ lifts to a simple curve in the cover $ i: S'\to S$ corresponding to $G$.  Then $i$ induces a norm non-increasing map on bounded cohomology so that $\|[i^*\hat\omega_\rho] - [i^*\hat\omega_{\rho'}]\|_\infty < \epsilon/2$.

 Let $D_\gamma: S' \to S'$ be a Dehn twist about $\gamma$.  By considering the action of $D_{\gamma*}$ on $\pi_1(S')$, and since $\gamma \in \ker\rho'\circ i$, we see that $D_{\gamma}^*i^*\hat\omega_{\rho'} = i^*\hat\omega_{\rho'}$ at the level of chains.  The mapping class group of $S'$ acts by isometries on $\overline{\Hb}^3_b(\pi_1(S');\bR)$, so we have 
\[\|[D_{\gamma}^*i^*\hat\omega_{\rho}] - [D_{\gamma}^*i^*\hat\omega_{\rho'}]\|_\infty = \|[i^*\hat\omega_\rho] - [i^*\hat\omega_{\rho'}]\|_\infty <\epsilon/2.\]

We would now like to argue that $\|[D_{\gamma}^*i^*\hat\omega_{\rho}] -[i^*\hat\omega_{\rho}]\|_\infty >\epsilon $.  This follows immediately from Theorem \ref{separation} in the case that $\gamma\in \pi_1(S)$ was already simple, because $D_{\gamma}(\lambda) \not = \lambda$.   
However $\gamma\in \pi_1(S)$ was not necessarily simple (which is why we are working in the cover $i:S'\to S$ where it is simple), so we need a more delicate argument.  Theorem \ref{separation} only supplies us with the bound $\|[D_{\gamma}^*i^*\hat\omega_{\rho}] -[i^*\hat\omega_{\rho}]\|_\infty >\epsilon(S')$, and $\epsilon(S')$ may go to zero as the genus $g'$ of $S'$ tends to infinity.  To circumvent this issue, we build a sequence of chains $\{V_j\}$ for $M_\rho$ satisfying $\displaystyle \frac{|\hat\omega_{\rho}(V_j)|}{\|V_j\|_1} > \epsilon - \frac{M}{\|H_j\|}$ for some constant $M$ as in the proof of Theorem \ref{separation}.  The finite sheeted covering $i: S'\to S$ induces a transfer map $\tau: C_\cdot (S\times I) \to C_\cdot (S'\times I)$, and we take $V_j' = \tau(V_j)$.  Notice that \[\displaystyle \frac{|\hat\omega_{\rho}(V_j')|}{\|V_j'\|_1} = \displaystyle \frac{\deg(i)|\hat\omega_{\rho}(V_j)|}{\deg(i)\|V_j\|_1} = \displaystyle \frac{|\hat\omega_{\rho}(V_j)|}{\|V_j\|_1} > \epsilon - \frac{M}{\|H_j\|}.\]  Let $\lambda'$ be the geodesic lamination on $S'$ satisfying $i(\lambda') = \lambda$.  We can argue as in the proof of Theorem \ref{mainthm1} that since $D_\gamma(\lambda')\not = \lambda'$, there is a constant $c>0$ such that $|D_\gamma^*i^*\omega_\rho(V_j')|<c$ for all $j$.  Thus given $e>0$, there is an $N$ such that for $j\ge N$
\begin{align*}
\frac{|(i^*\hat\omega_{\rho}-D_{\gamma}^*i^*\hat\omega_{\rho} +\delta B)(V_j')|}{\|V_j'\|_1} & \ge \epsilon - \frac{M}{\|H_j\|} - \frac{c + 32(g'-1)\|B\|_\infty}{\|V_j'\|_1} \\ & > \epsilon - e,
\end{align*}
where $B\in \Cb_b^2(S\times I)$ is chosen arbitrarily.  
We conclude that $\|[D_{\gamma}^*i^*\hat\omega_{\rho}] -[i^*\hat\omega_{\rho}]\|_\infty >\epsilon$ in general.  
Thus  \begin{align*}
\epsilon & < \|[D_{\gamma}^*i^*\hat\omega_{\rho}] - [i^*\hat\omega_{\rho}]\|_\infty\\ &
\le \|[D_{\gamma}^*i^*\hat\omega_{\rho}] - [D_{\gamma}^*i^*\hat\omega_{\rho'}]\|_\infty+ \|[i^*\hat\omega_{\rho'}] - [i^*\hat\omega_{\rho}]\|_\infty \\ 
& = 2 \|[i^*\hat\omega_\rho] - [i^*\hat\omega_{\rho'}]\|_\infty <2\epsilon/2 = \epsilon.
\end{align*}
This is a contradiction in the case that $S$ was closed, so $\rho'$ was faithful.  If $S$ is not closed, then $\bH^3/\im\rho$ is homeomorphic to a handlebody $H$ of genus equal to the rank of $\pi_1(S)$.  In this case, take $\epsilon'<\epsilon(\partial \overline H)/2$.  We need only to notice that if $\gamma\in \ker\rho'$ is non-trivial, then in the finite sheeted cover of $H'$ where $\gamma$ is simple on $\partial \overline{H'}$, $D_\gamma: \partial \overline{H'}\to \partial \overline{H'}$ does not extend to a homeomorphism of $H'$ that is homotopic to the identity on $H'$.  This means that again $D_\gamma(\lambda') \not= \lambda' \in \cEL(\partial \overline {H'}, H')$, and so the argument above goes through analogously where, $\partial \overline H$ plays the role of $S$.  
\end{proof}

\bibliography{groups}{}
\bibliographystyle{amsalpha.bst}
\end{document}